\newtheorem{theorem}{Theorem}[section]
\newtheorem{lemma}[theorem]{Lemma}
\newtheorem{corollary}[theorem]{Corollary}
\newtheorem{remark}[theorem]{Remark}
\newtheorem{definition}[theorem]{Definition}
\newtheorem{example}[theorem]{Example}
\def\ra{\rightarrow}
\title[Higher Gaussian maps]{Higher Gaussian maps on the hyperelliptic locus and second fundamental form}
\author{D. Faro}
\address{Dario Faro  \\ Universit\`a  degli Studi di Milano Statale  \\ Dipartimento di Matematica \\ Via Saldini 50  \\ 20123 Milano, Italy  }
 \email{dario.faro@unimi.it}
\author{P. Frediani}
\address{Paola Frediani  \\ Universit\`a degli Studi di Pavia  \\ Dipartimento di Matematica \\ Via Ferrata 1  \\ 27100 Pavia, Italy  }
 \email{paola.frediani@unipv.it}
 \author{A. Lacopo}
 \address{Antonio Lacopo \\ Universit\`a degli Studi di Pavia  \\ Dipartimento di Matematica \\ Via Ferrata 1  \\ 27100 Pavia, Italy  }
 \email{antonio.lacopo01@universitadipavia.it}
\begin{document}

\begin{abstract}
In this paper we study higher even Gaussian maps of the canonical bundle on hyperelliptic curves and we determine their rank, giving explicit descriptions of their kernels. Then we use this descriptions to investigate the hyperelliptic Torelli map $j_h$ and its second fundamental form.  We study isotropic subspaces of the tangent space $T_{{\mathcal H}_g, [C]}$ to the moduli space ${\mathcal H}_g$ of hyperelliptic curves of genus $g$ at a point $[C]$, with respect to the second fundamental form $\rho_{HE}$ of $j_h$. In particular, for any Weierstrass point $p \in C$, we construct a subspace $V_p$ of dimension $\lfloor\frac{g}{2} \rfloor$ of $T_{{\mathcal H}_g, [C]}$ generated by higher Schiffer variations at $p$, such that the only isotropic tangent direction $\zeta \in V_p$ for the image of $\rho_{HE}$ is the standard Schiffer variation $\xi_p$ at the Weierstrass point $p \in C$. 
\end{abstract}

\thanks
{D. Faro, P. Frediani and A. Lacopo are members of GNSAGA (INdAM) and are partially supported by PRIN project {\em Moduli spaces and special varieties} (2022).}

\maketitle

\section{Introduction}
The purpose of this paper is twofold. On the one hand we study higher even Gaussian maps of the canonical bundle on hyperelliptic curves and we determine their rank, giving an explicit description of their kernels. On the other hand, we use this description to  investigate the local geometry of the image via the Torelli map of the moduli space ${\mathcal H}_g$ of hyperelliptic curves of genus $g$ in the moduli space ${\mathcal A}_g$ of principally polarised abelian varieties of dimension $g$. 

Let $C$ be a hyperelliptic curve of genus $g \geq 3$. Denote by $I_2$ the kernel of the multiplication map 
$$\mu_0: S^2H^0(C,K_C)\rightarrow H^0(C,K_C^{\otimes 2})^+,$$
where $H^0(C,K_C^{\otimes 2})^+$ denotes the invariant subspace of $H^0(C,K_C^{\otimes 2})$ under the action of the hyperelliptic involution. The canonical map is the composition of the $2:1$ map $\pi: C \rightarrow {\mathbb P}^1$ with the $(g-1)-{th}$ Veronese embedding $\nu_{g-1}: {\mathbb P}^1 \rightarrow {\mathbb P}^{g-1}$, hence the space $I_2$ is identified with the vector space of quadrics containing the rational normal curve $\nu_{g-1}({\mathbb P}^1) $ in ${\mathbb P}^{g-1}$. 
The second Gaussian map $\mu_2$ is a linear map 
$$\mu_2: I_2 = Ker(\mu_0) \rightarrow H^0(C,K_C^{\otimes 4}),$$
and more generally, for any $k \geq 1$, the $2k$-th Gaussian map is a linear map
$$\mu_{2k}: Ker( \mu_{2k-2} ) \rightarrow H^0(C,K_C^{\otimes 2k+2}).$$
These maps were introduced by Wahl in \cite{wahl1}. One can also define odd Gaussian maps, each one being defined on the kernel of the previous one. The most important and studied one is the first Wahl map: 
$$\mu_1: \wedge^2 H^0(C, K_C) \rightarrow H^0(C, K_C^{\otimes 3}).$$

It was proven by Wahl that if $C$ sits on a $K3$ surface, then the first Wahl map $\mu_1$ is not surjective (see \cite{wahl1}, see also \cite{bm}). On the other hand, Ciliberto, Harris and Miranda proved that for the general curve of genus $g \geq 10$, $g \neq 11$, $\mu_1$ is surjective (see \cite{chm}, \cite{voi} for another proof). 

If $C$ is a hyperelliptic curve, it was proven in \cite{wahl}, \cite{cm} that the rank of $\mu_1$ is $2g-3$, while in \cite{colombofrediani} it was shown that the rank of $\mu_2$ is $2g-5$. 

Our first main result is the following 
\begin{theorem}
\label{TheoremA}
(See Theorem \ref{rteoremainrese})
Let $C$ be a hyperelliptic curve of genus $g \geq 3$ . Then for every $  0\leq k \leq  \frac{g-1}{2}$ 
\begin{equation}
Rank(\mu_{2k})=2g-(4k+1),
\end{equation}
\begin{equation}
    dim(Ker(\mu_{2k})) =\frac{(g-1)(g-2)}{2} - k(2g -2k -3).
\end{equation}
Then,  for every $k>  \lfloor\frac{g-1}{2}\rfloor$ the domain of $\mu_{2k}$ is $0$, hence  $Rank (\mu_{2k} ) =0$. 
\end{theorem}

From this we immediately see (Remark \ref{chain}) that if $g$ is odd we have the following chain of inclusions: $$
0=Ker(\mu_{g-1})\subsetneq Ker(\mu_{g-3})\subsetneq ...\subsetneq Ker(\mu_2)\subsetneq I_2;
$$
while if $g$ is even we have 
$$
0=Ker(\mu_{g-2})\subsetneq Ker(\mu_{g-4})\subsetneq ...\subsetneq Ker(\mu_2)\subsetneq I_2.
$$

Denote by ${\mathcal H}_g$ the hyperelliptic locus in ${\mathcal M}_g$ and by $$j_{h}: {\mathcal H}_g \rightarrow {\mathcal A}_g$$ the restriction of the Torelli map to ${\mathcal H}_g$. 

The map $j_h$ is an orbifold immersion (see \cite{os}). The main tool for studying the local geometry of the hyperelliptic Torelli locus $j_{h}({\mathcal H}_g) \subset {\mathcal A}_g$ is the second fundamental form of the map $j_h$. 
Recall that the moduli space ${\mathcal A}_g$ is the quotient of the Siegel space $Sp(2g, {\mathbb R})/U(g)$ under the action of $Sp(2g, {\mathbb Z})$. 
We endow ${\mathcal A}_g$ with the Siegel metric, that is the orbifold metric induced by the symmetric metric on the Siegel space. 
The second fundamental form of $j_h$ is a map 
$$\rho_{HE}: N^*_{\mathcal{H}_g|\mathcal{A}_g} \rightarrow Sym^2 \Omega^1_{\mathcal{H}_g},$$
 where $N^*_{\mathcal{H}_g|\mathcal{A}_g}$ denotes the conormal bundle of $j_h(\mathcal{H}_g)$ in $\mathcal{A}_g$. 
 At a point $[C] \in {\mathcal H}_g$, $\rho_{HE}$ is therefore a linear map 
 $$\rho_{HE} : I_2 \rightarrow Sym^2 (H^0(C, K_C^{\otimes 2})^+).$$

One of the main problems in the study of the local geometry of the image of the hyperelliptic locus in ${\mathcal A}_g$ under the Torelli map is to investigate the existence of totally geodesic subvarieties of ${\mathcal A}_g$ contained in the hyperelliptic locus. 

This is related with the hyperelliptic Coleman-Oort conjecture, that says that for $g \geq 8$ there do not exist positive dimensional special subvarieties of ${\mathcal A_g}$ generically contained in the hyperelliptic Torelli locus, namely contained in the closure of $j_h({\mathcal H}_g)$ and intersecting $j_h({\mathcal H}_g)$. 
This conjecture was recently proven by Moonen \cite{moonenhyp}, generalising a previous result of Lu and Zuo \cite{lu-zuo}. 

In fact, special subvarieties are totally geodesic, hence the study of the second fundamental form is related with the above problem. 

Here we take a different viewpoint, namely, we are interested in the behaviour of the second fundamental form of the map $j_h$ in relation with higher even Gaussian maps.

In \cite{cftrans} it was proven that the map $\rho_{HE}$  can be expressed in terms of the Hodge Gaussian map $\rho$ introduced in \cite{cpt} and that it is injective. 
An important property proven in \cite{cpt} is a formula computing, for any $Q \in I_2$, the Hodge Gaussian map $\rho(Q)$ on Schiffer variations at points $p $ on any curve $C$ in terms of the second Gaussian map $\mu_2(Q)$.  

This property is one main tool used in \cite[Theorem 6.2]{fp} to show that the maximal dimension of a germ of a totally geodesic subvariety of ${\mathcal A}_g$ contained in the hyperelliptic locus is $g+1$.

In the case of hyperelliptic curves, the result of \cite[section 5]{cftrans} gives an expression for $\rho_{HE}(Q)$ on the Schiffer variation $\xi_p \in H^1(C, T_C)^+$ at a Weierstrass point $p \in C$, in terms of the second Gaussian map. This allows to show that   $\rho_{HE}(Q)(\xi_p \odot \xi_p)=0$, for any Weierstrass poin $p$  (see Remark \ref{schasym}). 

In \cite{fredianihigher}, a relation between higher even
Gaussian maps $\mu_{2k}$ of the canonical bundle on a smooth projective curve of genus $g \geq 4$ and the Hodge Gaussian map $\rho$ is given, 
generalising the above result. 

Here we exploit similar techniques to compute  $\rho_{HE}(Q)$ for quadrics $Q$ contained in the kernel of higher Gaussian maps on  certain tangent directions given by odd higher Schiffer variations $\xi_p^{2k+1} \in H^1(C, T_C)^+$ at a Weierstrass point $p \in C$. We refer to Section \ref{Schiffer} for the definition of higher Schiffer variations. In a local coordinate centered at a  point $p$, the element $\xi_p^{n}  \in H^1(C, T_C)$ has a Dolbeault representative given by $\frac{ \bar{\partial}\rho_p}{z^n} \frac{\partial}{\partial z}$, where $\rho_p$ is a bump function in $p$ which is equal to one in a small neighborhood $U$ containing $p$. If $p$ is a Weierstrass point and $n$ is odd, one immediately sees that $\xi_p^n$ is invariant under the hyperelliptic involution, hence it is tangent to the hyperelliptic locus.

So, for any $k \leq 2g-3$, consider the $(k+1)$-dimensional subspace $$V_k:=\langle \xi_p^1,\xi_p^3,...,\xi_p^{2k+1}\rangle \subset H^1(T_C)^+.$$
We prove the following \begin{theorem} (See Theorems \ref{thm2}, \ref{thm3}). 
\label{thmB}
\begin{enumerate}
    \item Let $0\leq k\leq \lfloor\frac{g-3}{2}\rfloor$ and let $Q\in Ker(\mu_{2k})$. Then we have 
    \begin{equation*}
   \rho_{HE}(Q)(\xi_p^l\odot \xi_p^m)=0 \quad \forall l+m\leq 4k+3, \ l,m \ odd.
    \end{equation*}
    This implies that the subspace $V_k=\langle \xi_p^1,\xi_p^3,...,\xi_p^{2k+1}\rangle \subset H^1(T_C)^+$
    is isotropic for $\rho_{HE}(Q)$,  $\forall Q\in Ker(\mu_{2k}).$

  \item   There exists $Q \in Ker(\mu_{2k})$ such that $\rho(Q) ( \xi_p^{2k+1} \odot \xi_p^{2k +3}) \neq 0$. Hence the subspace $V_{k+1}=\langle \xi_p^1,...,\xi_p^{2k+1},\xi_p^{2k+3}\rangle  \subset H^1(T_C)^+$ is not isotropic for $\rho_{HE}(Ker(\mu_{2k}))$. 
  \end{enumerate}
\end{theorem}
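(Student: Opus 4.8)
The plan is to work locally on the hyperelliptic curve $C$ near the Weierstrass point $p$, using the explicit uniformizing coordinate coming from the double cover $\pi\colon C\to\mathbb P^1$. If $t$ is a local coordinate at $p$ with $\pi$ ramified at $p$, then the canonical sections are pulled back from $\mathbb P^1$ and have an explicit Taylor expansion in $t^2$ (the hyperelliptic involution acts as $t\mapsto -t$). The higher Schiffer variations $\xi_p^{2j+1}$ are, by definition, represented by the harmonic/Dolbeault classes built from $t^{-(2j+1)}\,\partial_t$ paired against $\overline\partial$ of a cutoff bump at $p$; the parity $2j+1$ is exactly what makes these classes lie in $H^1(C,T_C)^+$. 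The first step is to write down, for a quadric $Q\in I_2$, the Hodge Gaussian map value $\rho(Q)(\xi_p^l\odot\xi_p^m)$ as a residue/local integral at $p$. Using the relation from \cite{fredianihigher} between the $\mu_{2k}$ and $\rho$, together with the Schiffer-variation formula of \cite{cpt} (generalized to higher Schiffer variations), this pairing should be expressible as a coefficient in the Taylor expansion at $p$ of $\mu_{2k}(Q)$ (or of the appropriate iterated Gaussian), up to a nonzero constant like $\pm 2\pi i$.

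For part (1): once $\rho_{HE}(Q)(\xi_p^l\odot\xi_p^m)$ is identified with a jet of $\mu_{2k}(Q)$ at $p$ — concretely, reading off the coefficient of $t^{\,l+m-2}$ or so in the local expansion of a section of $K_C^{\otimes 2k+2}$-type — the hypothesis $Q\in\operatorname{Ker}(\mu_{2k})$ kills all the relevant coefficients in the range $l+m\le 4k+3$. The key arithmetic point is to match the degree bookkeeping: $\mu_{2k}(Q)$ vanishing identically, combined with the fact that $p$ is a Weierstrass point (so only even-order jets of canonical-type sections survive, which is why $\mathrm{Res}$ computations such as $\mu_2(Q)(p)=0$ already hold, cf. Remark \ref{schasym}), forces the pairings with $l,m$ odd and $l+m\le 4k+3$ to be zero. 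Isotropy of $V_k=\langle\xi_p^1,\dots,\xi_p^{2k+1}\rangle$ is then immediate, since every generator pair $(\xi_p^l,\xi_p^m)$ with $l,m\in\{1,3,\dots,2k+1\}$ satisfies $l+m\le (2k+1)+(2k+1)=4k+2\le 4k+3$.

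For part (2): this is the genuinely non-vanishing statement, and I expect it to be the main obstacle. The strategy is to produce an explicit quadric $Q$. Since $I_2$ is the space of quadrics through the rational normal curve $\nu_{g-1}(\mathbb P^1)\subset\mathbb P^{g-1}$, one has a concrete monomial basis: in coordinates $x_0,\dots,x_{g-1}$ the quadrics $x_ix_j-x_{i-1}x_{j+1}$. Theorem \ref{TheoremA} tells us precisely which of these survive to $\operatorname{Ker}(\mu_{2k})$ (the dimension count $\tfrac{(g-1)(g-2)}2-k(2g-2k-3)$ must be respected), and one should pick the ``extremal'' quadric that lies in $\operatorname{Ker}(\mu_{2k})$ but whose image under the next structures is non-degenerate. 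Then compute $\rho(Q)(\xi_p^{2k+1}\odot\xi_p^{2k+3})$ via the residue formula from part (1): now $l+m=4k+4>4k+3$, so the obstruction coefficient of $\mu_{2k}$ is no longer forced to vanish, and one checks by a direct local computation that for the chosen $Q$ it is a nonzero multiple of a binomial-type constant. Non-isotropy of $V_{k+1}$ follows since $\xi_p^{2k+1}$ and $\xi_p^{2k+3}$ both lie in it. The delicate part is twofold: first, ensuring the chosen $Q$ actually lies in $\operatorname{Ker}(\mu_{2k})$ (which uses Theorem \ref{TheoremA} and the explicit description of the kernels promised in the abstract), and second, verifying that the combinatorial constant produced by the iterated residue/Taylor computation does not accidentally vanish — this is where one must be careful with the precise normalization of higher Schiffer variations and of $\rho$, and I would double-check it against the known base case $\mu_2$, rank $2g-5$, from \cite{colombofrediani}.
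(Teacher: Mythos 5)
Your overall framework is the right one (it is exactly Remark \ref{remarkhigher}, i.e.\ \cite[Remark 3.2]{fredianihigher}: the pairings $\rho(Q)(\xi_p^l\odot\xi_p^m)$ are controlled by the jets $\sum c_{\alpha\beta}g_\alpha^{(h)}(0)g_\beta^{(l)}(0)$ of the quadric at $p$), but part (1) has a genuine gap at its central step. Writing $Q=\sum c_{\alpha\beta}\gamma_\alpha\gamma_\beta$ locally, the hypothesis $Q\in Ker(\mu_{2k})$ only gives the identical vanishing of $\sum c_{\alpha\beta}g_\alpha^{(h)}g_\beta^{(l)}$ for $h+l\leq 2k+1$, and the factorization $\mu_{2k+2}(Q)=(k+1)\mu_{1,L}(s\wedge t)\,\mu_{2k+1,M}(\cdot)$ of Lemma \ref{lemmasss} gives vanishing at $p$ for $h+l=2k+2$ because $p$ is a ramification point. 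Your parity remark (only even-order jets of canonical sections survive at a Weierstrass point) kills the terms in which $h$ or $l$ is odd, but it says nothing about the pairs with $h,l$ both even and $2k+4\leq h+l\leq 4k+2$, and these are precisely what must vanish to reach the bound $4k+3$ needed for the isotropy of $V_k$. So the assertion that ``the hypothesis $Q\in Ker(\mu_{2k})$ kills all the relevant coefficients in the range $l+m\leq 4k+3$'' is the statement to be proved, not a consequence of what you wrote. In the paper this is the content of Theorem \ref{thm1}, and it is obtained from two inputs your sketch does not supply: Lemma \ref{lemma1}, which uses the explicit equations of $Ker(\mu_{2k})$ (Lemma \ref{lemmasadcx}, Theorem \ref{rteoremainrese}) to show that the coefficients $b_{r,m}$ of $Q$ in the basis $Q_{ij}$ vanish for $r+m\geq 2g-(2k+2)$, and Lemma \ref{lemma2}, which combines this with the vanishing orders $ord_p\,\omega_i=2g-2i-2$ of the basis of $H^0(M)$ at the Weierstrass point.

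Part (2) is a plan rather than a proof, and the plan as stated would need repair: no single ``monomial'' quadric of the rational normal curve lies in $Ker(\mu_{2k})$ in general (already the equations $\sum_{i+j=l}a_{ij}(j-i)=0$ of $Ker(\mu_2)$ exclude the individual basis quadrics $Q_{ij}$), so ``pick the extremal quadric in the kernel'' is not yet meaningful, and you defer precisely the two points where the difficulty lies: membership in the kernel and non-vanishing of the resulting constant. The paper proceeds differently: it computes $\rho(Q)(\xi_p^{2k+1}\odot\xi_p^{2k+3})$ for an arbitrary $Q\in Ker(\mu_{2k})$ as an explicit linear form $\sum_{u=1}^{k+1}\lambda_{k,u}\,b_{g-2k-3+u,\,g-u}$ in the coefficients of $Q$, proves $\lambda_{k,u}\neq 0$ for all $u$ (the combinatorial constant $-8u^3+8u^2(k+1)-4ku-2k-3$ is odd, hence nonzero, and the jet factors $\omega^{(4k+4-2u)}_{g-3-2k+u}(p)\,\omega^{(2u-2)}_{g-u}(p)$ are nonzero by the order computation), and then shows this linear form cannot vanish on all of $Ker(\mu_{2k})$ because the kernel imposes only $k$ independent conditions on the $k+1$ variables $a_{u,2k+3-u}$, together with a rescaling argument on the sections $\omega_i$. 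Without an argument of this kind (or a fully verified explicit example), the non-isotropy of $V_{k+1}$ is not established.
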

Following \cite{cfp} we call 
a nonzero direction $\zeta \in H^1(T_C)^+$  asymptotic if 
$$\rho_{HE}(Q)(\zeta \odot \zeta) = 0, \ \forall Q \in I_2.$$
Clearly tangent directions to totally geodesic subvarieties are asymptotic directions.
Notice that saying that a nonzero element $\zeta \in H^1(T_C)^+$ is an asymptotic direction means that the point $[\zeta] \in {\mathbb P}H^1(T_C)^+$ is in the base locus of the linear space of quadrics $\rho_{HE}(I_2) \subset Sym^2 (H^1(T_C)^+)^{\vee}$. 
In \cite{cftrans} it is shown that $\rho_{HE}$ is injective, hence the image of $\rho_{HE}$ in $Sym^2 (H^1(T_C)^+)^{\vee}$ has dimension $\frac{(g-1)(g-2)}{2}$. So $\rho_{HE}(I_2)$ is a linear system of quadrics in ${\mathbb P}(H^1(C,T_C)^+ )\cong {\mathbb P}^{2g-2}$ of dimension  $\frac{(g-1)(g-2)}{2}$.

Thus if $g \leq 5$, there are asymptotic directions, in fact there are also examples of special subvarieties of ${\mathcal A}_g$ of positive dimension generically contained in the hyperelliptic locus (see e.g. \cite{fgp}). 

On the other hand, for $g$ sufficiently high one would expect that the intersection of a space of quadrics of dimension $\frac{(g-1)(g-2)}{2}$ in ${\mathbb P}^{2g-2}$ would be empty, hence that asymptotic directions would not exist. This is not true because Schiffer variations at Weierstrass points are asymptotic directions (see Remark \ref{schasym}). On the other hand we prove the following 

\begin{theorem} (See Theorem \ref{asymp}). 
\label{theoremC}

Let $C$ be a hyperelliptic curve of genus $g \geq 3$ and $p \in C$ be a Weierstrass point. Then the asymptotic directions in the space $V_{\lfloor \frac{g-2}{2} \rfloor }$  are exactly the Schiffer variations $\xi_p = \xi_p^1$.

\end{theorem}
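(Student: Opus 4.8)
The plan is to prove the two inclusions separately. The inclusion $\mathbb{C}^{*}\cdot\xi_p^{1}\subseteq\{\text{asymptotic directions in }V_{N}\}$, where $N:=\lfloor\frac{g-2}{2}\rfloor$, is already recorded in Remark \ref{schasym}: since $p$ is a Weierstrass point, $\rho_{HE}(Q)(\xi_p^{1}\odot\xi_p^{1})=-2\pi i\,\mu_{2}(Q)(p)=0$ for every $Q\in I_{2}$. So the whole content lies in the reverse inclusion.

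For that, let $\zeta=\sum_{j=0}^{N}a_{j}\xi_p^{2j+1}\in V_{N}$ be a nonzero asymptotic direction which is not proportional to $\xi_p^{1}$, and set $r:=\max\{j:a_{j}\neq 0\}$, so $1\le r\le N$. From $r\le\lfloor\frac{g-2}{2}\rfloor$ one checks that $r-1\le\lfloor\frac{g-3}{2}\rfloor$, so Theorem \ref{thm2} applies with $k=r-1$: for every $Q\in Ker(\mu_{2(r-1)})$ one has $\rho_{HE}(Q)(\xi_p^{l}\odot\xi_p^{m})=0$ whenever $l,m$ are odd and $l+m\le 4r-1$. Expanding $0=\rho_{HE}(Q)(\zeta\odot\zeta)$ and using this, every term $a_{i}a_{j}\rho_{HE}(Q)(\xi_p^{2i+1}\odot\xi_p^{2j+1})$ with $i+j\le 2r-2$ vanishes, and since $0\le i,j\le r$ the only surviving contributions come from $(i,j)=(r,r)$ and from $(i,j)\in\{(r-1,r),(r,r-1)\}$. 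Dividing by $a_{r}\neq 0$, this yields, for every $Q\in Ker(\mu_{2(r-1)})$,
\[
a_{r}\,\rho_{HE}(Q)(\xi_p^{2r+1}\odot\xi_p^{2r+1})+2a_{r-1}\,\rho_{HE}(Q)(\xi_p^{2r-1}\odot\xi_p^{2r+1})=0.
\]

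The heart of the argument is then the claim that the two linear functionals $\phi,\psi$ on $Ker(\mu_{2(r-1)})$ given by $\phi(Q)=\rho_{HE}(Q)(\xi_p^{2r-1}\odot\xi_p^{2r+1})$ and $\psi(Q)=\rho_{HE}(Q)(\xi_p^{2r+1}\odot\xi_p^{2r+1})$ are linearly independent — equivalently, that the image of $Q\mapsto\rho_{HE}(Q)|_{\langle\xi_p^{2r-1},\xi_p^{2r+1}\rangle}$ is $2$-dimensional (it is automatically contained in the plane of quadratic forms whose $\xi_p^{2r-1}\odot\xi_p^{2r-1}$-coefficient vanishes, again by Theorem \ref{thm2}). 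Granting this, the displayed relation forces $a_{r}=a_{r-1}=0$, contradicting $a_{r}\neq 0$; hence $\zeta\in\langle\xi_p^{1}\rangle$, which is the reverse inclusion. Now $\phi\not\equiv 0$ is exactly Theorem \ref{thm3} with $k=r-1$ (note $\xi_p^{2r-1}=\xi_p^{2(r-1)+1}$ and $\xi_p^{2r+1}=\xi_p^{2(r-1)+3}$), so what remains is that $\psi$ is not a scalar multiple of $\phi$. For this I would return to the computation behind Theorem \ref{thm3}: in a hyperelliptic model $y^{2}=f(x)$ with $p$ a branch point, using the explicit description of $Ker(\mu_{2(r-1)})$ coming from the proof of Theorem \ref{rteoremainrese}, the local coordinate $y$ at $p$, and the formula of \cite{fredianihigher} expressing $\rho(Q)$ on a pair $\xi_p^{l}\odot\xi_p^{m}$ through the jets at $p$ of the higher Gaussian maps $\mu_{2i}(Q)$, one would evaluate $\phi$ and $\psi$ on a well-chosen two-parameter subfamily of $Ker(\mu_{2(r-1)})$ and check that the resulting $2\times 2$ matrix of values is invertible.

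The main obstacle is precisely this last non-degeneracy statement. Theorem \ref{thm3} as quoted provides only a single quadric on which $\phi$ is nonzero; the real work is to upgrade it to the assertion that $(\phi,\psi)$ surjects onto $\mathbb{C}^{2}$, i.e.\ that the restriction of the second fundamental form to $\langle\xi_p^{2r-1},\xi_p^{2r+1}\rangle$ is as non-degenerate as Theorem \ref{thm2} permits. Ruling out an accidental proportionality $\psi=\lambda\phi$ is where one genuinely needs the explicit kernels of Theorem \ref{rteoremainrese} together with the higher-Schiffer computation above. The bookkeeping — that $r-1\le\lfloor\frac{g-3}{2}\rfloor$, that $Ker(\mu_{2(r-1)})\neq 0$, and the degenerate small-genus cases in which $V_{N}$ is already a line — follows routinely from $r\le N=\lfloor\frac{g-2}{2}\rfloor$ and Theorem \ref{rteoremainrese}.
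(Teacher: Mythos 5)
Your reduction is essentially the paper's: you restrict to quadrics $Q\in Ker(\mu_{2(r-1)})$, use Theorem \ref{thm2} to kill all terms $\xi_p^{2i+1}\odot\xi_p^{2j+1}$ with $i+j\le 2r-2$, and arrive at the relation $a_r\psi(Q)+2a_{r-1}\phi(Q)=0$ on all of $Ker(\mu_{2(r-1)})$, where $\phi(Q)=\rho_{HE}(Q)(\xi_p^{2r-1}\odot\xi_p^{2r+1})$ and $\psi(Q)=\rho_{HE}(Q)(\xi_p^{2r+1}\odot\xi_p^{2r+1})$; Theorem \ref{thm3} gives $\phi\not\equiv 0$. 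Up to this point the argument is correct and matches the paper (which phrases the conclusion as a two-case choice of quadrics rather than as linear independence of functionals, but this is the same statement: one needs a quadric with $\phi=0$, $\psi\neq 0$, and one with $\phi\neq 0$, then cancels the diagonal term by a linear combination).

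However, the step you yourself flag as "the main obstacle" — that $\psi$ is not a scalar multiple of $\phi$, equivalently that $\psi$ does not vanish identically on the hyperplane $\ker\phi\subset Ker(\mu_{2(r-1)})$ — is exactly the new content of the theorem, and you do not prove it; you only propose to "evaluate $\phi$ and $\psi$ on a well-chosen two-parameter subfamily" and check a $2\times 2$ determinant, without producing the subfamily or the computation. In the paper this is Theorem \ref{Ak0}: for $Q$ in $A_{k,0}=\ker\phi$ (with $k=r-1$) one computes, by the same jet expansion as in Theorem \ref{thm3}, that $\rho(Q)(\xi_p^{2k+3}\odot\xi_p^{2k+3})=\sum_{u=1}^{k+1}\alpha_{k,u}\,b_{g-4-2k+u,\,g-u}\,\omega_{g-4-2k+u}^{(4k+6-2u)}(p)\,\omega_{g-u}^{(2u-2)}(p)$ with all $\alpha_{k,u}\neq 0$, so that $\psi$ vanishes on $A_{k,0}$ only on the proper subspace $A_{k,0,0}$ cut out by $b_{g-4-2k+u,g-u}=0$; combined with the dimension bound $\dim Ker(\mu_{2k})\ge 3$ (from Theorem \ref{rteoremainrese}, since $2k\le g-4$ resp. $g-5$) this yields the required quadric with $\phi=0$, $\psi\neq 0$. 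Without this computation the accidental proportionality $\psi=\lambda\phi$ is not excluded, and then your displayed relation only forces $a_{r-1}=-\tfrac{\lambda}{2}a_r$ rather than $a_r=0$, so no contradiction is reached. So the skeleton is right, but the heart of the proof — the analogue of Theorem \ref{Ak0} — is missing.
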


%
    

Finally, we show that Theorem \ref{asymp} also allows us to deduce Corollary \ref{bound}, which gives a bound for the dimension of a germ of a totally geodesic submanifold of ${\mathcal A}_g$, generically contained in the Hyperelliptic Torelli locus. Nevertheless this is weaker than the one already proven in \cite[Theorem 6.2]{fp}. 

The structure of the paper is as follows. 

In Section \ref{secidsyhbcvT} we recall the definition and basic properties of Gaussian maps. In Section \ref{section3} we determine the kernel of all even higher Gaussian maps on the hyperelliptic locus and we compute their rank, proving Theorem \ref{TheoremA}. In Section \ref{Schiffer} we define and give the basic properties of higher Schiffer variations. In Section \ref{second} we recall the definition and some results on the second fundamental form of the hyperelliptic Torelli map $j_h$. Finally in Section \ref{iso} we prove Theorems \ref{thmB}, \ref{theoremC} and their consequences (Remark \ref{chains} and Corollary  \ref{bound}).

\section{Gaussian  maps}
\label{secidsyhbcvT}
Let $C$ be a smooth curve of genus $g$, and let $L$ and $M$ be two line bundles on $C$.  In this section we recall the definition of the (higher) Gaussian maps on $C$ associated with the line bundles $L$ and $M$. 
\\
\

Set $S:=C\times C$ and let $\Delta
\subset S$ be the diagonal. Take a non-negative integer $k$, then  the
$k$-th Gaussian (or Wahl map) associated with $L$ and $M$ is
the map given by restriction to the diagonal
\begin{equation*}
  H^0(S,    L\boxtimes M (-k \Delta) ) \stackrel{\Phi_{k,L,M}}
  {\longrightarrow} H^0(S, L\boxtimes M (-k \Delta)_{|{\Delta}})\cong
  H^0(C, L\otimes M \otimes K_C^{\otimes k}).
\end{equation*}
We will only consider the case $L=M$, and we set   
$\Phi_{k,L}:=\Phi_{k,L,L}$.  Since we have the identification $H^0(S, L
\boxtimes M) \cong H^0(C,L) \otimes H^0(C,M)$, the map $\Phi_{0,L}$ is given by the multiplication map of global sections
\begin{equation*}
  H^0(C,L)\otimes
  H^0(C,L)\rightarrow H^0(C,L^{ \otimes 2}),
\end{equation*}
which vanishes identically on $\wedge^2 H^0(C,L)$.
Then we have 
$$\ker \Phi_{0,L} = H^0(S, L\boxtimes L(-\Delta)) \cong \wedge^2 H^0(C,L)\oplus I_2(L),$$
where $I_2(L)$ is the kernel of the multiplication map $\mu_{0,L}: S^2H^0(C,L)\rightarrow H^0(C,L^{\otimes 2})$. Since $\Phi_{1,L}$
vanishes on symmetric tensors, one  writes
\begin{equation}\nonumber
  \mu_{1,L}:= {\Phi_{1,L}}_{|\wedge^2H^0(L)}:\wedge^2H^0(L)\rightarrow H^0(
  K_C\otimes L^{ \otimes 2}).
\end{equation}
Take a local frame $l$ for $L$, local coordinate $z$ and two sections $s_1, s_2 \in H^0(C,L)$ with $s_i = f_i(z)l$. Then we have 
\begin{gather}
  \label{muformula}
  \mu_{1,L} (s_1\wedge s_2) = (f'_1 f_2 - f'_2f_1) dz \otimes l^{
    2} .
\end{gather}
Consequently, the zero divisor of $\mu_{1,L}(s_1\wedge s_2)$ is $2B +R$, where $B$ is the base locus of the pencil $\langle s_1, s_2 \rangle $ and $R$ is the ramification
divisor of the associated morphism.

Again $H^0(S, L\boxtimes L (-2\Delta))$ decomposes as the sum of
$I_2(L)$ and the kernel of $\mu_{1,L}$. Since $\Phi_{2,L}$ vanishes on skew-symmetric tensors, we write
\begin{equation*}
  \mu_{2,L}:= {\Phi_{2,L}}_{| I_2(L)}: I_2(L)\rightarrow H^0(C,L^{\otimes 2}\otimes K_C^{\otimes 2}).
\end{equation*}
We will denote by $\mu_2$  the second gaussian map of the canonical line
bundle $K_C$ on $C$:
\begin{equation*}
  \mu_2:= \mu_{2,K_C}:I_2(K_C)\rightarrow H^0(K_C^{\otimes 4}).
\end{equation*}

In general, since even Gaussian maps vanish on antisymmetric tensors and odd Gaussian maps vanish on symmetric tensors,  for all $k \geq 1$, we set  
\begin{equation}
\mu_{2k,L}:={\Phi_{2k,L}}_{|_{Ker(\mu_{2k-2,L})}} : Ker(\mu_{2k-2,L}) \rightarrow H^0(L^{\otimes 2} \otimes K_C^{\otimes 2k}).
\end{equation}

\begin{equation}
\mu_{2k+1,L}:={\Phi_{2k+1,L}}_{|_{Ker(\mu_{2k-1,L})}} : Ker(\mu_{2k-1,L}) \rightarrow H^0(L^{\otimes 2} \otimes K_C^{\otimes 2k+1}).
\end{equation}

We will now give a local expression for these Gaussian maps. 

Choose a basis $\eta_1, ..., \eta_r$ of $H^0(L)$, whose local expressions is $\eta_i = f_i(z) l$, where $l$ is a local frame for $L$. Assume now that $Q = \sum_{ij} a_{ij} \eta_i \otimes \eta_j \in I_2(L)$, with $a_{ij} = a_{ji}$.  Then we have: 
$$ \sum_{ij} a_{ij} f_i(z) f_j(z) = 0,$$ 
since $Q \in I_2(L)$. Derivating we get  
$\sum_{ij} a_{ij} f'_i(z) f_j(z) = 0, $ and taking another derivative,  we obtain  
$$ \sum_{ij} a_{ij} f^{(2)}_i(z) f_j(z) = - \sum_{ij} a_{ij} f'_i(z) f'_j(z).$$

Then 
$$\mu_{2,L}(Q) =  \sum_{ij} a_{ij} f^{(2)}_i(z) f_j(z) dz^{\otimes 2} \otimes l^{\otimes 2}= - \sum_{ij} a_{ij} f'_i(z) f'_j(z) dz^{\otimes 2} \otimes l^{\otimes 2}.$$ 

In general $Q = \sum_{ij} a_{ij} \eta_i \otimes \eta_j \in Ker(\mu_{2k-2,L})$ if and only if 
\begin{equation}
\label{mu}
\sum_{i,j} a_{ij} f_i^{(h)}(z) f_j^{(n)} (z) \equiv 0, \ \ \forall h,n, \ \text{such that } \ h+n \leq 2k-1.
\end{equation}
Moreover, $\forall n =0, ..., 2k$,  we have: 
\begin{equation}
\label{rimu}
\mu_{2k,L}(Q) = (-1)^n \sum_{i,j}a_{ij} f_i^{(2k-n)}(z) f_j^{(n)} (z) dz^{\otimes 2k} \otimes l^{\otimes 2}.
\end{equation}

Analogously, $\alpha = \sum_{i<j} a_{ij} (\eta_i \wedge \eta_j )\in Ker(\mu_{2k-1, L})$ if and only if 
$$\sum_{i<j} a_{ij} (f_i^{(h)}(z) f_j^{(n)}-f_i^{(n)}(z) f_j^{(h) }) (z) \equiv 0, \ \ \forall h,n, \ \text{such that } \ h+n \leq 2k$$ 
and we have 
 
\begin{equation}
\mu_{2k+1,L}(\alpha )= (-1)^n \sum_{i<j}a_{ij} (f_i^{(2k+1-n)}(z) f_j^{(n)} (z) - f_j^{(2k+1-n)}(z) f_i^{(n)} (z) )dz^{\otimes 2k+1} \otimes l^{\otimes 2}.\end{equation}
for every $n=0,...,2k+1$.
%
\begin{remark}
\label{inclsudsds}
Observe that for every $k \geq 1 $ 
\begin{equation}
Ker(\Phi_{2k,L})=Ker(\mu_{2k,L}) \oplus Ker(\mu_{2k-1,L}),
\end{equation}
and from the definition of $\mu_{k,L}$ we have the inclusions:
\begin{align}
... &\subset Ker(\mu_{2k,L}) \subset Ker(\mu_{2(k-1),L}) \subset ... \subset Ker(\mu_{2,L}) \subset I_2=Ker(\mu_{0,L}); \\  \nonumber \\ 
...&\subset Ker(\mu_{2k+1,L}) \subset Ker(\mu_{2k-1,L}) \subset ... \subset Ker(\mu_{1,L}).
\end{align}
\end{remark}

\section{Rank of higher order Gaussian maps on hyperelliptic curves}
\label{section3}

Let $C$ be a hyperelliptic curve of genus $g \geq 3$, $L$ be the line bundle giving the $g^1_2$. Set $M = K_C \otimes L^{\vee}$ and $\pi:C \rightarrow {\mathbb P}^1$ the map induced by $|L|$. Denote by $\nu_n: {\mathbb P}^1 \hookrightarrow {\mathbb P}^{n}$ the $n^{th}$ Veronese embedding. Recall that the canonical map is given by the composition $\nu_{g-1} \circ \pi$, so $$K_C \cong \pi^*({\mathcal O}_{{\mathbb P}^1}(g-1)) \cong L^{\otimes (g-1)}$$ and $M \cong  L^{\otimes (g-2)}$. Then $H^0(C,M) \cong H^0({\mathbb P}^1, {\mathcal O}_{{\mathbb P}^1}(g-2))$  has dimension $g-1$.

Denote by $\Phi_k $, $\mu_k$, the Gaussian maps $\Phi_{k,K_C}$ and $\mu_{k,K_C}$. The main result of this section is the computation of the rank of higher Gaussian maps for any hyperelliptic curve of any genus. More precisely we are going to prove the following:
\begin{theorem}
\label{rteoremainrese}
Let $C$ be a hyperelliptic curve of genus $g \geq 3$ . Then for every $  0\leq k \leq  \frac{g-1}{2}$ 
\begin{equation}
Rank(\mu_{2k})=2g-(4k+1),
\end{equation}
\begin{equation}
    dim(Ker(\mu_{2k})) =\frac{(g-1)(g-2)}{2} - k(2g -2k -3).
\end{equation}
Then, $Rank (\mu_{2k} ) =0$ for every $k>  \lfloor\frac{g-1}{2}\rfloor$. 

\end{theorem}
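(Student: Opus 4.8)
The plan is to exploit the $SL_2(\mathbb{C})$-representation theory attached to the rational normal curve. Since the canonical map factors as $\nu_{g-1}\circ\pi$, we have $K_C\cong\pi^*\mathcal{O}_{\mathbb{P}^1}(g-1)$, and I identify $V:=H^0(C,K_C)$ with $H^0(\mathbb{P}^1,\mathcal{O}(g-1))=\mathrm{Sym}^{g-1}(\mathbb{C}^2)$ as $SL_2$-modules, and more generally $H^0(C,K_C^{\otimes m})^+$ with $H^0(\mathbb{P}^1,\mathcal{O}(m(g-1)))$. The first step is to reduce the Gaussian maps of $K_C$ on $C$ to those of $\mathcal{O}(g-1)$ on $\mathbb{P}^1$: because $\pi$ is étale away from the $2g+2$ Weierstrass points, the local expressions \eqref{dgeqaugcbs}, \eqref{mukds} for $\Phi_{r,K_C}$ on $C$ coincide with the corresponding ones for $\mathcal{O}(g-1)$ on $\mathbb{P}^1$ pulled back by $\pi$; since a global section of $K_C^{\otimes m}$ is determined by its restriction to the complement of the Weierstrass points, this identifies $\mu_{2k}$ together with its kernel with the even Gaussian map of $\mathcal{O}(g-1)$ on $\mathbb{P}^1$. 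Tracking the factor $dz^{\otimes 2k}$, which vanishes to order $2k$ at each ramification point, one sees that $\mathrm{Im}(\mu_{2k})$ is contained in $H^0(C,K_C^{\otimes 2k+2}(-2kR))^+$, and using $\mathcal{O}_C(R)\cong\pi^*\mathcal{O}(g+1)$ this space is $H^0(\mathbb{P}^1,\mathcal{O}(2g-2-4k))=:W_k$. Hence $\mu_{2k}\colon\ker\mu_{2k-2}\to W_k$ is $SL_2$-equivariant; in particular it is zero as soon as $2g-2-4k<0$.

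Next I use the classical plethysm $S^2V=\bigoplus_{j=0}^{J}W_j$, where $W_j:=\mathrm{Sym}^{2(g-1)-4j}(\mathbb{C}^2)$ and $J:=\lfloor\frac{g-1}{2}\rfloor$, which is multiplicity-free. The map $\mu_0$ is the multiplication of sections, surjective onto $W_0=\mathrm{Sym}^{2(g-1)}$, so $I_2=\ker\mu_0=\bigoplus_{j=1}^{J}W_j$. The heart of the proof is the claim, proved by induction on $k$ from $0$ to $J$, that $\ker\mu_{2k}=\bigoplus_{j=k+1}^{J}W_j$. Granting $\ker\mu_{2k-2}=\bigoplus_{j\ge k}W_j$, equivariance and Schur's lemma (the $W_j$ are pairwise non-isomorphic, and $W_j\cong W_k$ only for $j=k$) force $\mu_{2k}$ to kill every summand $W_j$ with $j>k$ and to equal a scalar $c_k$ times the projection onto $W_k$; so $\ker\mu_{2k}=\bigoplus_{j>k}W_j$ precisely when $c_k\ne 0$. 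Everything then comes down to showing $c_k\ne 0$, for which I evaluate $\mu_{2k}$ on the highest-weight vector $w_k=\sum_{i=0}^{2k}(-1)^i\binom{2k}{i}\,e_i\odot e_{2k-i}$ of the summand $W_k\subseteq S^2V$, where $e_i\in V$ is the section corresponding locally to $z^i\cdot l$ (note that $2k\le 2J\le g-1$ throughout this range, so $e_{2k}$ is defined). Using \eqref{mukds}, among the terms $e_i\odot e_{2k-i}$ only $i=k$ survives the double differentiation, and one obtains $\mu_{2k}(w_k)=(2k)!\,dz^{\otimes 2k}\otimes l^{\otimes 2}\ne 0$; a parallel elementary computation, resting on the identity $\sum_{s}(-1)^s\binom{2k-2r}{s}=0$ for $r<k$, checks directly that $w_k\in\ker\mu_{2k-2}$, in accordance with the inductive hypothesis. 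This closes the induction. For $k>J$ one has $\ker\mu_{2(k-1)}\subseteq\ker\mu_{2J}=0$ by Remark \ref{inclsudsds}, so $\mathrm{Rank}(\mu_{2k})=0$.

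The numerical assertions then follow by bookkeeping. For $k\le J$ one gets $\mathrm{Rank}(\mu_{2k})=\dim W_k=2(g-1)-4k+1=2g-(4k+1)$, and $\dim\ker\mu_{2k}=\sum_{j=k+1}^{J}(2g-1-4j)=\frac{(g-1)(g-2)}{2}-k(2g-2k-3)$, once one records that $\sum_{j=1}^{J}(2g-1-4j)=\frac{(g-1)(g-2)}{2}$ in both parities of $g$. I expect the genuinely delicate point to be the first step: the reduction to $\mathbb{P}^1$ and, above all, the correct identification of the target of $\mu_{2k}$ with $W_k$, keeping careful track of the ramification twist and of $SL_2$-equivariance, given that $SL_2$ does not act on $C$ itself. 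Once that is in place, the representation-theoretic skeleton (plethysm plus Schur) and the explicit non-vanishing computation for $w_k$ are routine.
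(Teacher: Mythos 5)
Your proof is correct, but it takes a genuinely different route from the paper's. The paper works in explicit coordinates throughout: it writes $Q=\sum a_{ij}Q_{ij}$ in the basis of Lemma \ref{fhgf}, uses Lemma \ref{lemmasss} to reduce $\mu_{2k+2}$ on $C$ to $\mu_{2k+1,M}$ for $M=K_C\otimes L^{\vee}$, derives in Lemma \ref{lemmasadcx} explicit linear equations on the $a_{ij}$ cutting out $Ker(\mu_{2k})$, and then proves the rank formula by an inductive determinant computation (column reduction of the matrices $B_{k,l}$) showing that exactly $s_k=2g-(4k+1)$ of these conditions are new at each step. You instead transfer the problem to ${\mathbb P}^1$, invoke the multiplicity-free plethysm $S^2\mathrm{Sym}^{g-1}(\mathbb{C}^2)=\bigoplus_j \mathrm{Sym}^{2(g-1)-4j}(\mathbb{C}^2)$, and use $SL_2$-equivariance plus Schur's lemma, so that everything reduces to one nonvanishing evaluation on the highest weight vector $w_k$; your computations there check out ($w_k$ lies in $H^0(S,K_C\boxtimes K_C(-2k\Delta))$ because $\sum_s(-1)^s\binom{2k-a-b}{s}=0$ for $a+b\leq 2k-1$, and $\mu_{2k}(w_k)$ is a nonzero multiple of $dz^{\otimes 2k}\otimes l^{\otimes 2}$), as does the twist $K_C^{\otimes (2k+2)}(-2kR)\cong \pi^*\mathcal{O}_{{\mathbb P}^1}(2g-2-4k)$ identifying the relevant target with $W_k$. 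The point you flag as delicate, the reduction to ${\mathbb P}^1$, is indeed fine: the kernels are cut out by the identical vanishing of the universal differential expressions \eqref{equazionenanullamentee}, computed in the coordinate $x$ away from the Weierstrass points, so they coincide for $C$ and for $\mathcal{O}_{{\mathbb P}^1}(g-1)$, and the ranks then coincide as well; in fact only this identification of kernels (not of images) is strictly needed. Your argument is shorter and more structural, and yields as a bonus the intrinsic description $Ker(\mu_{2k})=\bigoplus_{j>k}W_j$ as an $SL_2$-module together with the surjectivity of $\mu_{2k}$ onto $H^0({\mathbb P}^1,\mathcal{O}(2g-2-4k))$. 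What the paper's coordinate proof buys, and why it is not just a matter of taste, is the explicit system \eqref{equationsmuk} together with the independence of specified subsets of those equations, established inside the proof via the nonvanishing determinants: this determinantal information is quoted again in the proofs of Lemma \ref{lemma1} and Theorem \ref{thm3}, so the later sections depend on data that your representation-theoretic argument does not directly produce.
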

Notice that the result was already known for 
 $\mu_2$ (see \cite{colombofrediani}).  

\begin{remark}
\label{chain}
By Theorem \ref{rteoremainrese}, it is immediate to see that 
 if $g$ is odd we have the following chain of inclusions: $$
0=Ker(\mu_{g-1})\subsetneq Ker(\mu_{g-3})\subsetneq ...\subsetneq Ker(\mu_2)\subsetneq I_2(K_C);
$$
while if $g$ is even we have 
$$
0=Ker(\mu_{g-2})\subsetneq Ker(\mu_{g-4})\subsetneq ...\subsetneq Ker(\mu_2)\subsetneq I_2(K_C).
$$
Notice that if  $g$ is odd, $\dim(Ker(\mu_{g-3})) =1$, and if $g$ is even $\dim(Ker(\mu_{g-4})) =3$. 
\end{remark}

Let us first recall the following well known result (see for example \cite{colombofrediani}).
\begin{lemma}
\label{fhgf}
Let $C$ be a hyperelliptic curve of genus $g \geq 3$. Let $|L|$ be the $g^1_2$. Set $M=K_C \otimes L^{\vee}$, let $\omega_1,...,\omega_{g-1}$ be a basis for $H^0(M)$ and let $\langle s,t \rangle$ be a basis for $H^0(L)$.  Then the map defined by 
\begin{align}
\Lambda^2H^0(M) &\xrightarrow{\psi} I_2 \nonumber
\\ \nonumber 
\omega_i \wedge \omega_j &\ra Q_{ij}:=s\omega_i \odot t\omega_j - s\omega_j \odot t\omega_i,\nonumber 
\end{align} 
is an isomorphism. In particular, observe that $\{Q_{ij}\}$ gives a basis for $I_2$.
\end{lemma}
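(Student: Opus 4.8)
The plan is to first identify $I_2$ explicitly as the space of quadrics vanishing on the rational normal curve $\nu_{g-1}(\mathbb{P}^1) \subset \mathbb{P}^{g-1}$, then verify that the $Q_{ij}$ lie in $I_2$, and finally count dimensions and check linear independence to conclude $\psi$ is an isomorphism. Since $K_C \cong \pi^*\mathcal{O}_{\mathbb{P}^1}(g-1)$ and the canonical map factors as $\nu_{g-1}\circ \pi$, we have $H^0(C,K_C) \cong H^0(\mathbb{P}^1,\mathcal{O}(g-1))$, and $H^0(C,K_C^{\otimes 2})^+ \cong H^0(\mathbb{P}^1,\mathcal{O}(2g-2))$. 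The multiplication map $\mu_0 \colon S^2H^0(C,K_C) \to H^0(C,K_C^{\otimes 2})^+$ is therefore identified with $S^2H^0(\mathbb{P}^1,\mathcal{O}(g-1)) \to H^0(\mathbb{P}^1,\mathcal{O}(2g-2))$, whose kernel has dimension $\binom{g}{2} - (2g-1) = \frac{(g-1)(g-2)}{2}$. This gives the target dimension $\dim I_2 = \frac{(g-1)(g-2)}{2} = \binom{g-1}{2} = \dim \Lambda^2 H^0(M)$, so it suffices to prove $\psi$ is well-defined (lands in $I_2$) and injective, or alternatively well-defined and surjective.

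Next I would check that each $Q_{ij} = s\omega_i \odot t\omega_j - s\omega_j \odot t\omega_i$ really lies in $I_2$. Writing things in the coordinate $z$ on $\mathbb{P}^1$ and choosing a local frame, set $s = s(z)$, $t = t(z)$ for $H^0(L)$ and $\omega_i = \omega_i(z)$ for $H^0(M)$; under the canonical identification $H^0(K_C) \cong H^0(L)\cdot H^0(M)$, the element $Q_{ij}$ maps under $\mu_0$ to $(s\omega_i)(t\omega_j) - (s\omega_j)(t\omega_i) = st(\omega_i\omega_j - \omega_j\omega_i) = 0$, so indeed $Q_{ij} \in I_2$. Thus $\psi$ is well-defined and linear.

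Finally I would prove injectivity. Suppose $\sum_{i<j} a_{ij} Q_{ij} = 0$ in $S^2 H^0(K_C)$. Expanding, $\sum_{i<j} a_{ij}(s\omega_i \odot t\omega_j - s\omega_j \odot t\omega_i) = 0$; one should argue that the products $\{s\omega_i, t\omega_i\}$ form a basis of $H^0(K_C)$ (they are $2(g-1) = $ ... wait, that is $2g-2$, but $\dim H^0(K_C) = g$, so there are linear relations among them) — this is the main obstacle. Because $\dim H^0(L)\cdot\dim H^0(M) = 2(g-1) > g$, the products are not independent, so one cannot read off coefficients naively. To handle this, I would instead work in the polynomial model: identify $H^0(M) \cong \langle x^a y^{g-2-a} \rangle_{0\le a\le g-2}$ and $H^0(L) \cong \langle x,y\rangle$, so that $s\omega_i, t\omega_i$ range over all monomials of degree $g-1$ and the $Q_{ij}$ become the classical Koszul-type relations $x\cdot(y\,m) - y\cdot(x\,m)$ generating the ideal of the rational normal curve. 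Linear independence of the $Q_{ij}$ in $S^2 H^0(\mathcal{O}(g-1))$ then follows because these are exactly the $\binom{g-1}{2}$ standard quadratic generators of that ideal, and $\dim I_2 = \binom{g-1}{2}$ forces them to be a basis; alternatively, one invokes the well-known fact (e.g. from \cite{colombofrediani}) that the $2\times 2$ minors of the $2\times(g-1)$ catalecticant matrix are linearly independent and span $I_2$. Either route shows $\psi$ is a linear isomorphism between spaces of equal dimension, completing the proof.
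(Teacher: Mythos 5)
Your proposal is essentially correct, and it is worth noting that the paper itself offers no proof of this lemma at all: it is quoted as a ``well known result'' with a pointer to \cite{colombofrediani}, so there is no internal argument to compare with, and what you have written is precisely the standard argument behind that citation. Your three steps are sound: $Q_{ij}$ multiplies to $st(\omega_i\omega_j-\omega_j\omega_i)=0$, so $\psi$ lands in $I_2$; you correctly identified, and then avoided, the trap of trying to read off coefficients from the $2g-2$ products $s\omega_i, t\omega_i$, which are not linearly independent in the $g$-dimensional space $H^0(K_C)$; and in the polynomial model the $Q_{ij}$ become (up to sign) the $2\times 2$ minors $z_az_{b+1}-z_{a+1}z_b$ of the $2\times(g-1)$ matrix cutting out the rational normal curve, whose independence together with the dimension count closes the argument. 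Two small repairs. First, the dimension of $S^2H^0(K_C)$ is $\binom{g+1}{2}=\frac{g(g+1)}{2}$, not $\binom{g}{2}$; the count is $\dim I_2=\frac{g(g+1)}{2}-(2g-1)=\frac{(g-1)(g-2)}{2}$, so your final value is right but the intermediate binomial is not, and you are tacitly using the surjectivity of $S^2H^0(\mathcal{O}_{\mathbb{P}^1}(g-1))\to H^0(\mathcal{O}_{\mathbb{P}^1}(2g-2))$ together with the identification $H^0(K_C^{\otimes 2})^+\cong H^0(\mathcal{O}_{\mathbb{P}^1}(2g-2))$ (both standard, and consistent with how the paper identifies $I_2$ with the quadrics through $\nu_{g-1}(\mathbb{P}^1)$). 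Second, rather than appealing to a ``well-known fact'' for the independence of the minors, a one-line check suffices: in a relation $\sum_{a<b}c_{ab}(z_az_{b+1}-z_{a+1}z_b)=0$ the monomial $z_0z_k$, $k\geq 2$, occurs only in the minor with $(a,b)=(0,k-1)$, forcing $c_{0,b}=0$ for all $b$, and induction on $a$ kills the remaining coefficients; with that, $\psi$ maps a basis of $\Lambda^2H^0(M)$ to linearly independent elements of $I_2$ and equality of dimensions gives the isomorphism.
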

In \cite[Lemma 4.1]{colombofrediani} it is shown the following 
\begin{lemma}
\label{lemmadsds}
For any $Q_{ij}$ as in the previous lemma:
$$
\mu_2(Q_{ij})=\mu_{1,L}(s \wedge t)  \mu_{1,M}(\psi^{-1}(Q_{ij})),
$$ 
\end{lemma}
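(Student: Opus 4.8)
The goal is to prove Lemma \ref{lemmadsds}, namely the factorization
$$
\mu_2(Q_{ij})=\mu_{1,L}(s\wedge t)\,\mu_{1,M}\bigl(\psi^{-1}(Q_{ij})\bigr),
$$
where the right-hand side is the product of a section of $K_C\otimes L^{\otimes 2}$ with a section of $K_C\otimes M^{\otimes 2}$, landing in $H^0(K_C^{\otimes 2}\otimes L^{\otimes 2}\otimes M^{\otimes 2})=H^0(K_C^{\otimes 4})$, as required for $\mu_2$.

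The plan is to work in a local trivialization and compare the local expressions on both sides. First I would choose a local coordinate $z$ on $C$, a local frame $\ell$ for $L$ and a local frame $m$ for $M$, so that $K_C=L\otimes M$ has local frame $\ell\otimes m$. Write $s=a(z)\ell$, $t=b(z)\ell$ for the basis of $H^0(L)$ and $\omega_i=c_i(z)m$, $\omega_j=c_j(z)m$ for the sections of $H^0(M)$. Then $Q_{ij}=s\omega_i\odot t\omega_j-s\omega_j\odot t\omega_i$ has, as an element of $S^2H^0(K_C)$, components $f_1=ac_i$, $g_1=bc_j$ and $f_2=-ac_j$, $g_2=bc_i$ with respect to the frame $\ell\otimes m$ (so $\sum f_kg_k=ac_ibc_j-ac_jbc_i=0$, confirming $Q_{ij}\in I_2$). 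By formula \eqref{equadf}, $\mu_2$ can be computed as $\mu_2(Q_{ij})=-\sum_k f_k'g_k'\,dz^{\otimes 2}\otimes(\ell\otimes m)^{\otimes 2}$.

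Next I would expand $-\sum_k f_k'g_k'=-[(ac_i)'(bc_j)'-(ac_j)'(bc_i)']$ using the product rule and collect terms. Writing $(ac_i)'=a'c_i+ac_i'$ etc., a direct expansion groups the result into a product of two $2\times 2$-type "Wronskian" expressions: one only in $a,b$ and their derivatives, the other only in $c_i,c_j$ and their derivatives. Concretely I expect
$$
-\bigl[(ac_i)'(bc_j)'-(ac_j)'(bc_i)'\bigr]=(a'b-ab')\,(c_i'c_j-c_ic_j').
$$
The left factor $(a'b-ab')\,dz\otimes\ell^{\otimes 2}$ is exactly the local expression \eqref{muformula} for $\mu_{1,L}(s\wedge t)$, and the right factor $(c_i'c_j-c_ic_j')\,dz\otimes m^{\otimes 2}$ is $\mu_{1,M}(\omega_i\wedge\omega_j)=\mu_{1,M}(\psi^{-1}(Q_{ij}))$. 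Matching the frame factors $dz^{\otimes2}\otimes(\ell\otimes m)^{\otimes 2}=(dz\otimes\ell^{\otimes2})\cdot(dz\otimes m^{\otimes2})$ then gives the claimed identity, and since it holds in every local chart it holds globally.

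The only real point to be careful about is the middle algebraic step: naively expanding $-\sum_k f_k'g_k'$ produces sixteen terms, and one must check that the "cross" terms (those mixing a derivative on an $a/b$ factor with a derivative on a $c_i/c_j$ factor, e.g. $a'c_ibc_j'$-type terms) cancel in pairs, leaving only the clean product of the two Wronskians. This is a routine but slightly delicate bookkeeping computation; once it is done the rest is just bookkeeping of frames. An alternative, more conceptual route that avoids this — which I would mention as a remark — is to use the multiplicativity of Gaussian maps: $Q_{ij}$ is the image under the multiplication map $H^0(L)\otimes H^0(L)\otimes H^0(M)\otimes H^0(M)\to H^0(K_C)\otimes H^0(K_C)$ of $(s\wedge t)\otimes(\omega_i\wedge\omega_j)$, and the first Gaussian map is compatible with such products by Leibniz, so the second Gaussian map of the product decomposes accordingly; this is essentially the same Leibniz identity repackaged, but it explains structurally why the factorization must occur.
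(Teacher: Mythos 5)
Your computation is correct: the Leibniz expansion does collapse to $(a'b-ab')(c_i'c_j-c_ic_j')$, which is exactly the local Wronskian factorization behind the statement, and the frame bookkeeping is consistent with the paper's local formula $\mu_{2,L}(\alpha)=-\sum f_k'g_k'\,dz^{\otimes 2}\otimes l^{\otimes 2}$. Note that the paper does not actually reprove this lemma (it cites \cite[Lemma 4.1]{colombofrediani}); its proof of the generalization, Lemma \ref{lemmasss}, is precisely this same local Leibniz-expansion argument carried out for arbitrary $k$, of which your argument is the $k=0$ case.
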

where with the expression \begin{equation}
\mu_{1,L}(s \wedge t)  \mu_{1,M}(\psi^{-1}(Q_{ij}))
\end{equation}
we mean the image of $\mu_{1,L}(s \wedge t) \otimes \mu_{1,M}(\psi^{-1}(Q_{ij}))$ under the multiplication map \begin{equation}
H^0(K_C \otimes L^{\otimes 2}) \otimes H^0(K_C \otimes M^{\otimes 2})  \ra H^0(K_C^{\otimes 4}).
\end{equation}
Observe that from Lemma \ref{lemmadsds} it follows that $Rank(\mu_2)=Rank(\mu_{1,M})$ since $\mu_{1,L}(s \wedge t)$  is a nonzero section in $H^0(K_C \otimes L^{\otimes 2})$. Indeed the zero locus of $\mu_{1,L}(s \wedge t)$  is given by the base locus of $|L|$ together with the ramification divisor of the induced morphism (recall \eqref{muformula}).
\\
\\
We start generalizing Lemma \ref{lemmadsds}  to any higher-order Gaussian maps. We use the same notations as in Lemma $\ref{fhgf}$
\begin{lemma}
\label{lemmasss}
Let $k \geq 0$ be an integer and let 
\begin{equation*}
Q=\sum \limits_{1 \leq i < j \leq g-1}a_{ij}Q_{ij} \in Ker(\mu_{2k}).
\end{equation*}
Then
\begin{itemize}
\item[(i)]  for any $k \geq 1$ $$
\sum \limits_{1 \leq i < j \leq g-1}a_{ij} (\omega_i \wedge \omega_j)\in Ker(\mu_{2k-1,M}),
$$
\item[(ii)] for any $k \geq 0$
$$
\mu_{2k+2}(Q)=(k+1)\mu_{1,L}(s \wedge t) \mu_{2k+1,M} ( \sum \limits_{1 \leq i < j \leq g-1}a_{ij} (\omega_i \wedge \omega_j)) \in H^0(K_C^{\otimes{2k+4}}),$$
\end{itemize}
\end{lemma}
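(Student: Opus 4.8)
The plan is to argue by induction on $k$, using the local computations of Section \ref{secidsyhbcvT} together with the factorization $K_C \cong L^{\otimes(g-1)}$, $M \cong L^{\otimes(g-2)}$. First I would fix local data: a local coordinate $z$ and a local frame $l$ for $L$, so that $s = \sigma(z)\,l$, $t = \tau(z)\,l$ with $\sigma,\tau$ polynomials of degree $\le 2$, and $\omega_i = h_i(z)\,l^{\otimes(g-2)}$ with $h_i$ of degree $\le g-2$. Then $s\omega_i = \sigma h_i\, l^{\otimes(g-1)}$ is expressed in the frame $l^{\otimes(g-1)}$ for $K_C$, and similarly $t\omega_j = \tau h_j\, l^{\otimes(g-1)}$. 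Writing $Q = \sum a_{ij}Q_{ij} = \sum_{ij} a_{ij}(s\omega_i \odot t\omega_j - s\omega_j \odot t\omega_i)$, its local expression as an element of $\mathrm{Ker}(\Phi_{0,K_C})$ has the shape $\sum_\nu p_\nu \odot q_\nu$ with the $p_\nu, q_\nu$ built from the products $\sigma h_i$, $\tau h_j$. The whole point is to track how the Leibniz rule distributes derivatives $(\sigma h_i)^{(a)}(\tau h_j)^{(b)}$ into the three "atoms" $\sigma^{(\alpha)}\tau^{(\beta)}$, $\sigma^{(\alpha)}(h_i h_j)^{(\gamma)} - (\text{sym})$, and the analogue coming from the Wronskian-type factor $\mu_{1,L}(s\wedge t)$ locally $= (\sigma'\tau - \sigma\tau')\,dz \otimes l^{\otimes 2}$.

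For part (i): I would show that the vanishing conditions defining $Q \in \mathrm{Ker}(\mu_{2k})$, namely $\sum f_i^{(h)}g_i^{(r)} \equiv 0$ for all $h + r \le 2k - 1$ in \eqref{equazionenanullamentee}, when pulled back through the substitution $f_\nu = \sigma h_{i}$, $g_\nu = \tau h_{j}$ (antisymmetrized), force $\sum a_{ij}(h_i' h_j - h_i h_j')^{(s)} \equiv 0$ and more generally the corresponding identities $\sum a_{ij}\big(h_i^{(h)}h_j^{(r)} - h_i^{(r)}h_j^{(h)}\big)$-type combinations to vanish for $h + r$ in the appropriate range; comparing with \eqref{equazionenanullamenteeytr} for the bundle $M$ (one order lower, as expected since $K_C = L \otimes M$ contributes the extra $dz$), this is exactly the statement that $\sum a_{ij}\,\omega_i \wedge \omega_j \in \mathrm{Ker}(\mu_{2k-1,M})$. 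Here the matching of ranges ($h+r \le 2k-1$ on the $K_C$ side versus $\le 2k-2$ on the $M$ side, combined with the base-point/ramification divisor of $|L|$) is the bookkeeping that has to be done carefully.

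For part (ii): using \eqref{dgeqaugcbs} I would compute $\mu_{2k+2}(Q)$ by choosing the most convenient value of $m$ (e.g.\ splitting derivatives as evenly as possible), expand each $(\sigma h_i)^{(a)}(\tau h_j)^{(b)}$ by Leibniz, and then repeatedly use the identities from part (i) and \eqref{equazionenanullamenteeytr} to kill all terms except those in which exactly one derivative lands on the $\sigma,\tau$ factors in the "Wronskian" pattern $\sigma'\tau - \sigma\tau'$ and the remaining $2k+1$ derivatives land on the $h_i h_j$ part in the antisymmetrized pattern defining $\mu_{2k+1,M}$. The combinatorial factor $(k+1)$ should emerge from counting the number of ways to place the single "$L$-derivative", after all the lower-order terms have been shown to cancel — this counting, and verifying that no cross terms survive, is the main obstacle. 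The base case $k=0$ is precisely Lemma \ref{lemmadsds}, already proven in \cite{colombofrediani}, and the inductive step uses that $Q \in \mathrm{Ker}(\mu_{2k}) \subset \mathrm{Ker}(\mu_{2k-2})$ (Remark \ref{inclsudsds}) so that (i) at level $k$ and (ii) at level $k-1$ are available.
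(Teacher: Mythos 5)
Your plan follows essentially the same route as the paper's proof: induction on $k$ with base case Lemma \ref{lemmadsds}, a local Leibniz expansion of $\Phi_{2k+2}$ with the symmetric split of the derivatives, the vanishing identities \eqref{equazionenanullamenteeytr} (available because part (i) holds at level $k$ by induction) to kill all terms of total order $\leq 2k$, and the single surviving Wronskian term $\mu_{1,L}(s\wedge t)\,\mu_{2k+1,M}(\cdot)$ carrying the factor $\binom{k+1}{k}=k+1$. The only (cosmetic) difference is in part (i): the paper gets it immediately from (ii) at level $k-1$ together with the fact that $\mu_{1,L}(s\wedge t)$ is a nonzero holomorphic section, rather than by the direct bookkeeping on the product identities that you sketch.
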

\begin{proof}
Let us proceed by induction. When $k=0$ $(ii)$ is Lemma \ref{lemmadsds}, and when $k=1$ $(i)$ it is an immediate consequence of the hypothesis $\mu_2(Q)=0$ together with Lemma \ref{lemmadsds}.
\\
\\
Now  take $n \geq 2$ and  suppose that $(ii)$ holds for every  $0 \leq k <n-1$ and $(i)$ holds for every  $1 \leq k <n$.  We are going  to prove that $(ii)$ holds for $k=n-1$, which automatically implies that  $(i)$ holds for $k=n$. Set $k=n-1$ and suppose that  
$$
Q=\sum \limits_{1 \leq 1 < j \leq g-1}a_{ij}Q_{ij} \in Ker(\mu_{2k}).
$$
On some open sets write
\begin{equation*}
\omega_i=f_idz\otimes T^{\vee},  \ t=fT \ \text{and} \ s=gT,
\end{equation*}
where $f_i$ is a holomorphic function and $T$ is a local generator on  $L$. From the definition of $Q_{ij}$ in Lemma \ref{fhgf}, it follows that  we can write locally  $Q$ as
\begin{equation}
\sum \limits_{1 \leq 1 < j \leq g-1}a_{ij}Q_{ij}=  \sum a_{ij} (ff_idz \odot gf_j dz - ff_j  dz \odot gf_i dz),
\end{equation}
where we can take $z$ to be a local coordinate on $C$. Then, by definition
\begin{align}
\label{eqmu2k}
&\mu_{2k+2}(\sum \limits_{1 \leq 1 < j \leq g-1}a_{ij}Q_{ij}) \nonumber
\\ 
&=\sum \limits_{1 \leq 1 < j \leq g-1}a_{ij}((ff_i)^{(k+1)} (gf_j)^{(k+1)} - (ff_j)^{(k+1)} (gf_i)^{k+1}) dz^{\otimes (2k+4)}. 
\end{align}
Observe that the latter expression can be written as 
\begin{align}
&= \left[ \sum \limits_{1 \leq i < j \leq g-1}a_{ij} \sum\limits_{h=0}^{k+1} \binom{k+1}{h}f^{(k+1-h)}f_i^{(h)}  \sum\limits_{l=0}^{k+1} \binom{k+1}{l} g^{(k+1-l)}f_j^{(l)} \right] dz^{\otimes (2k+4)}\\ \nonumber
&-\left[\sum \limits_{1 \leq i < j \leq g-1}a_{ij}  \sum\limits_{h=0}^{k+1} \binom{k+1}{h} f^{(k+1-h)}f_j^{(h)}  \sum\limits_{l=0}^{k+1} \binom{k+1}{l}g^{(k+1-l)}f_i^{(l)}\right] dz^{\otimes (2k+4)}
\\
&=\left[\sum\limits_{h,l=0}^{k+1} \binom{k+1}{h} \binom{k+1}{l} f^{(k+1-h)} g^{(k+1-l)} \sum \limits_{1 \leq i < j \leq g-1}a_{ij} (f_i^{(h)} f_j^{(l)} -f_j^{(h)} f_i^{(l)})\right] dz^{\otimes (2k+4)} \label{dsrgfger}
\end{align}
By hypothesis $Q \in Ker (\mu_{2k})$ and hence from the inductive hypothesis
\begin{equation*}
\mu_{2k-1,M} (\sum \limits_{1 \leq i < j \leq g-1}a_{ij} (\omega_i \wedge \omega_j))=0.
\end{equation*}
Then using \eqref{mu} and \eqref{eqmu2k}  it follows that for every $h+l \leq 2k$
\begin{equation*}
\sum \limits_{1 \leq i < j \leq g-1}a_{ij}(f_i^{(h)} f_j^{(l)} - f_j^{(h)} f_i^{(l)}) \equiv 0.\end{equation*} Then  \eqref{dsrgfger} becomes
\begin{align*}
&\left[ \binom{k+1}{k} f g^{(1)} (\sum \limits_{1 \leq i < j \leq g-1}a_{ij} (f_i^{(k+1)} f_j^{(k)} -f_j^{(k+1)} f_i^{(k)})\right] dz^{\otimes (2k+4)} + \\ 
&\left[ \binom{k+1}{k}f^{(1)} g (\sum \limits_{1 \leq i < j \leq g-1}a_{ij} (f_i^{(k)} f_j^{(k+1)} -f_j^{(k)} f_i^{(k+1)}) \right] dz^{\otimes (2k+4)}.
\end{align*}
This is just
\begin{equation*}
 \left[\binom{k+1}{k} ( f g^{(1)} - f^{(1)}g)  (\sum \limits_{1 \leq i < j \leq g-1}a_{ij} (f_i^{(k+1)} f_j^{(k)} -f_j^{(k+1)} f_i^{(k)}) \right] dz^{\otimes (2k+4)},  
\end{equation*}
which is equal to
$$
(k+1)\mu_{1,L}(t \wedge s)\mu_{2k+1,M}(\sum \limits_{1 \leq i < j \leq g-1}a_{ij} (\omega_i \wedge \omega_j)),
$$
and so $(ii)$ holds. 
\end{proof}
In the following lemma, we are going to describe the equations of the loci 
\begin{equation*}
   ...\subset Ker(\mu_{2k}) \subset Ker(\mu_{2(k-1)}) \subset ... \subset Ker(\mu_{2}) \subset I_2=Ker(\mu_{0}). 
\end{equation*}

Consider a hyperelliptic curve given by the affine equation:

\begin{equation}
\label{hypereq}
y^2 = \prod_{i=1}^{2g+2}(x-t_i).
\end{equation}

For simplicity from now on we will assume $t_1=0$, so $|M| =|K_C(-2p)|$, where $p$ is the Weierstrass point $(0,0)$ in  affine coordinates. 
Consider the basis 
\begin{equation}
\label{base}
\{\alpha_0=\frac{dx}{y},\alpha_1=\frac{xdx}{y},...,\alpha_{g-1}=\frac{x^{g-1}dx}{y}\}
\end{equation}

 of $H^0(K_C)$. Then a basis of $H^0(M)$ is given by 

\begin{equation}
\label{basedusygvh}
\{\alpha_1=\frac{xdx}{y},...,\alpha_{g-1}=\frac{x^{g-1}dx}{y}\}.
\end{equation}

 Set $S = C \times C$, and let $\Delta$ be the diagonal as usual. Then we have the following commutative diagram of exact sequences 
 
 \begin{equation}\label{bb}
\begin{tikzcd}[column sep=small, row sep=small, every node/.style={scale=0.9}]
  & 0 \arrow[r] &
    (M\boxtimes M)(-(k+1)\Delta) \arrow[d, hook] \arrow[r] &
    (M\boxtimes M)(-k\Delta) \arrow[d, hook] \arrow[r] &
    (M\boxtimes M)(-k\Delta)_{|\Delta} \arrow[d, hook] \arrow[r] &
    0 \\
  & 0 \arrow[r] &
    (K_C \boxtimes K_C)(-(k+1)\Delta) \arrow[r] &
    (K_C\boxtimes K_C)(-k\Delta) \arrow[r] &
    (K_C\boxtimes K_C)(-k\Delta)_{|\Delta} \arrow[r] &
    0 \\
\end{tikzcd}
\end{equation}

So, taking global sections we get 

\begin{equation}
 \begin{tikzcd}
 \label{bb1}
   & H^0(S,( M\boxtimes M)(-k\Delta))\ar[d, hook] \ar[r, "\Phi_{k,M}"] & H^0(S,( M\boxtimes M)(-k\Delta)_{|\Delta}\ar[d, hook] & \\
    & H^0(S,( K_C\boxtimes K_C)(-k\Delta)) \ar[r, "\Phi_{k}"] & H^0(S, ( K_C\boxtimes K_C)(-k\Delta)_{|\Delta})& \\
    & & & &  &\\
\end{tikzcd}
\end{equation}

Hence the corresponding diagram for the even order Gaussian maps:

\begin{equation}
 \begin{tikzcd}
 \label{bb2}
   & Ker(\mu_{2k-2, M})\ar[d, hook] \ar[r, "\mu_{2k,M}"] & H^0(C,M^{\otimes 2}\otimes K_C^{\otimes 2k})\ar[d, hook] & \\
    &Ker(\mu_{2k-2}) \ar[r, "\mu_{2k}"] & H^0(C,K_C^{\otimes 2k+2})& \\
    & & & &  &\\
\end{tikzcd}
\end{equation}

Hence  to compute the Gaussian maps $\mu_{2k, M}$, we compute the restriction of the Gaussian maps of the canonical bundle  $\mu_{2k}$ to the subspace $Ker(\mu_{2k-2, M})$ of $Ker(\mu_{2k-2})$.

The equations of $Ker(\mu_2)$ in $I_2$ in terms of this basis have already been described in the proof of \cite[Proposition 4.2]{colombofrediani}. Indeed we have the following
\begin{lemma}
\label{lemmanahMus}
Let 
\begin{equation*}
Q=\sum \limits_{1 \leq i < j \leq g-1}a_{ij}Q_{ij}  \in I_2,
\end{equation*}
$Q_{ij}$ as in \ref{fhgf}, with respect to the basis of $H^0(M)$ given  by $\alpha_i=x^i\frac{dx}{y}$,  for $i=1,...,g-1$. Then $Q \in Ker(\mu_2)$ if and only if for all $3 \leq l \leq 2g-3$
\begin{equation}
\label{dshbyeqyuo}
\sum \limits_{\substack{1 \leq i < j \\ i+j=l}} a_{ij} (j-i)=0.
\end{equation}
\end{lemma}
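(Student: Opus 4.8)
The plan is to reduce the statement to an explicit computation of the first Wahl map $\mu_{1,M}$ on the basis $\{\alpha_i = x^i\,dx/y\}_{i=1}^{g-1}$ of $H^0(M)$. By Lemma~\ref{lemmadsds} and linearity of all the maps involved, writing $\psi^{-1}(Q)=\sum_{1\le i<j\le g-1}a_{ij}\,\alpha_i\wedge\alpha_j$, we have $\mu_2(Q)=\mu_{1,L}(s\wedge t)\cdot\mu_{1,M}(\psi^{-1}(Q))$; since $\mu_{1,L}(s\wedge t)$ is a fixed nonzero section of $H^0(K_C\otimes L^{\otimes 2})$ (its zero divisor being the base locus of $|L|$ together with the ramification divisor of $\pi$, cf.~\eqref{muformula}), multiplication by it is injective on global sections. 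Hence $Q\in Ker(\mu_2)$ if and only if $\mu_{1,M}(\psi^{-1}(Q))=0$, and it suffices to compute the latter.

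To do so I would work locally. Let $U\subset C$ be the complement of the Weierstrass points and of the two points lying over $x=\infty$; on $U$ the function $x$ is a local coordinate, $dx$ is a local frame for $K_C$, and the rational section $\alpha_0=dx/y$ (which lies in $H^0(K_C)$ but not in $H^0(M)$) is a nowhere-vanishing local frame for $M$. Writing $\alpha_i=x^i\cdot\alpha_0$ and applying formula~\eqref{muformula} with $L$ replaced by $M$, $z=x$, $l=\alpha_0$, $f_i(x)=x^i$, we get
\begin{equation*}
\mu_{1,M}(\alpha_i\wedge\alpha_j)=\bigl(ix^{i-1}\cdot x^{j}-jx^{j-1}\cdot x^{i}\bigr)\,dx\otimes\alpha_0^{\otimes 2}=(i-j)\,x^{\,i+j-1}\,dx\otimes\alpha_0^{\otimes 2},
\end{equation*}
and therefore, on $U$,
\begin{equation*}
\mu_{1,M}(\psi^{-1}(Q))=\Bigl(\sum_{1\le i<j\le g-1}a_{ij}(i-j)\,x^{\,i+j-1}\Bigr)\,dx\otimes\alpha_0^{\otimes 2}.
\end{equation*}

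The last step is to read off the equations. As $dx\otimes\alpha_0^{\otimes 2}$ is a nonzero rational section of $K_C\otimes M^{\otimes 2}$, the global section $\mu_{1,M}(\psi^{-1}(Q))$ vanishes if and only if the rational function $\sum a_{ij}(i-j)x^{i+j-1}$ vanishes identically on $C$; being a polynomial in $x$ (pulled back from $\mathbb{P}^1$), this holds if and only if all of its coefficients are zero. Grouping the monomials by $l:=i+j$, which for $1\le i<j\le g-1$ ranges exactly over $3\le l\le 2g-3$, the coefficient of $x^{l-1}$ equals $\sum_{i+j=l,\ i<j}a_{ij}(i-j)=-\sum_{i+j=l,\ i<j}a_{ij}(j-i)$, and equating these to zero gives precisely the relations~\eqref{dshbyeqyuo}. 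The computation is entirely elementary; the only points needing a little care are the passage from $\mu_2$ to $\mu_{1,M}$ and the bookkeeping of which monomials actually occur (so that vanishing of the section is equivalent to the separate vanishing of each grouped coefficient), and I do not expect any real obstacle.
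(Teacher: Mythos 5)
Your proposal is correct and follows essentially the same route as the paper: reduce via Lemma \ref{lemmadsds} to the vanishing of $\mu_{1,M}(\psi^{-1}(Q))$ (using that multiplication by the nonzero section $\mu_{1,L}(s\wedge t)$ is injective), then compute $\mu_{1,M}$ locally with $x$ as coordinate on the basis $\alpha_i=x^i\,dx/y$ and read off the coefficients of $x^{l-1}$; this is exactly the computation the paper attributes to the proof of Proposition 4.2 in \cite{colombofrediani} and repeats for higher $k$ in the proof of Lemma \ref{lemmasadcx}.
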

\
\\
Now we generalize the approach in \cite{colombofrediani}. The strategy is to use Lemma  \ref{lemmasss} together with the explicit description of the basis $\alpha_0,...,\alpha_{g-1}$ of the $H^0(C, K_C)$ for a hyperelliptic curve $C$. We have the following result.
\begin{lemma}
\label{lemmasadcx}
 Let 
 $$Q=\sum \limits_{1 \leq i < j \leq g-1}a_{ij}Q_{ij} \in I_2,
 $$
with  $Q_{ij}=s\alpha_i \odot t\alpha_j - s\alpha_j \odot t\alpha_i$ as in \ref{fhgf}, for $i=1,...,g-1.$ For any $k \geq 2$,  $Q \in Ker(\mu_{2k})$ if and only if 
\begin{itemize}
\item $\forall \  3\leq l \leq 2g-3$,
 $$
\sum \limits_{\substack{1 \leq i < j \\ i+j=l}} a_{ij} (j-i)=0,
$$
\item  $\forall \  2 \leq m \leq k$, $\forall \  2m-1 \leq l \leq 2g-3$,
\begin{equation}
\label{equationsmuk}
\sum \limits_{\substack{1 \leq i < j \\ i+j=l \\ i \geq m-1, \\ j \geq m-1 }} a_{ij} (j-i) ij(i-1)(j-1)...(i-(m-2))(j-(m-2))=0.
\end{equation}
\end{itemize}
\end{lemma}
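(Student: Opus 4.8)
The strategy is to translate the condition $Q \in Ker(\mu_{2k})$ into the vanishing of the Gaussian maps $\mu_{2m-1,M}$ for $m = 1, \dots, k$ (via Lemma \ref{lemmasss}(i)) together with $\mu_2(Q) = 0$ (which is exactly Lemma \ref{lemmanahMus}), and then to unwind each $\mu_{2m-1,M}(\sum a_{ij}\,\omega_i\wedge\omega_j) = 0$ into polynomial identities in the local coordinate. First I would set up the local picture following Lemma \ref{lemmasss}: on the affine chart, take the local coordinate $x$, with $\alpha_i = x^i\,\tfrac{dx}{y}$, so in the notation of Lemma \ref{lemmasss} the holomorphic functions attached to $\omega_i = \alpha_i$ are $f_i = x^i$ (up to the common frame $\tfrac{dx}{y}$, absorbed into $dz'$). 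Then Lemma \ref{lemmasss}(ii) gives, for each $m$ with $1 \le m \le k$, that $\mu_{2k}(Q) = 0$ forces $\mu_{2m-1,M}(\sum a_{ij}\,\omega_i\wedge\omega_j) = 0$, and by the local expression \eqref{mukdsd}–\eqref{equazionenanullamenteeytr} for odd Gaussian maps of $M$ this is equivalent to
\[
\sum_{1\le i<j\le g-1} a_{ij}\bigl(f_i^{(m)} f_j^{(m-1)} - f_j^{(m)} f_i^{(m-1)}\bigr) \equiv 0
\]
as a function of $x$ (using the reduced form with the split $2m-1 = m + (m-1)$).

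The next step is purely combinatorial: compute $f_i^{(m)} = (x^i)^{(m)} = i(i-1)\cdots(i-m+1)\,x^{i-m}$. Substituting into the displayed identity, the monomial $x^{i-m}\cdot x^{j-m+1} = x^{i+j-2m+1}$ appears, so collecting by total degree $l := i+j$ and reindexing $l \mapsto l - 2m + 1$ (equivalently, fixing the exponent) I would read off, for each achievable $l$, the coefficient
\[
\sum_{\substack{1\le i<j\le g-1\\ i+j = l}} a_{ij}\Bigl(i(i-1)\cdots(i-m+1)\cdot j(j-1)\cdots(j-m+2) - (i\leftrightarrow j)\Bigr) = 0.
\]
A short factorization of the antisymmetrized product — pulling out the common factor $i(i-1)\cdots(i-(m-2))\,j(j-1)\cdots(j-(m-2))$ and checking that the bracket collapses to $(i-(m-1)) - (j-(m-1)) = i - j$ — yields exactly the factor $(j-i)\,ij(i-1)(j-1)\cdots(i-(m-2))(j-(m-2))$ of \eqref{equationsmuk}, with $m$ there playing the role of $m$ here. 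The range of $l$: the lowest nonzero derivative contributions require $i \ge m-1$ and $j \ge m-1$ (otherwise the product $i(i-1)\cdots(i-(m-2))$ already vanishes, which is harmless), and the exponent $x^{l-2m+1}$ must be $\ge 0$, forcing $l \ge 2m-1$; the upper bound $l \le 2g-3$ is just $i,j \le g-1$. The case $m=1$ (i.e. the first bullet) is separate and is precisely Lemma \ref{lemmanahMus}; for $m \ge 2$ one gets the second family of equations.

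Finally I would check the converse: if all the displayed equations hold for $m = 1, \dots, k$, then running the identities \eqref{equazionenanullamenteeytr} backwards shows $\Phi_{0,M}(\cdot) = \mu_{1,M}(\cdot) = \cdots = \mu_{2k-1,M}(\cdot) = 0$ for $\sum a_{ij}\,\omega_i\wedge\omega_j$, hence by Lemma \ref{lemmasss}(ii) (applied with $k$ replaced by $k-1$) that $\mu_{2k}(Q) = 0$; here one uses that $\mu_{1,L}(s\wedge t)$ is a nonzero section, so multiplication by it is injective on sections, exactly as in the remark after Lemma \ref{lemmadsds}. The main obstacle I anticipate is purely bookkeeping: correctly identifying which split $m + (m-1)$ of $2m-1$ to use in the reduced local expression so that the vanishing identities \eqref{equazionenanullamenteeytr} from the \emph{lower} Gaussian maps kill all the unwanted cross terms, and then verifying that the antisymmetrized product of falling factorials factors cleanly as in \eqref{equationsmuk} — this is elementary but it is where an off-by-one in the index $m$ (or in the derivative order) would propagate. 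There is also a minor subtlety in that the equations should be checked to be nontrivial only in the stated range and automatically satisfied (or vacuous) outside it, so that the count of independent conditions matches what is needed later for Theorem \ref{rteoremainrese}.
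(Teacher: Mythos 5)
Your proposal is correct and follows essentially the same route as the paper's proof: reduce via Lemma \ref{lemmasss} (and the nonvanishing of $\mu_{1,L}(s\wedge t)$) to the vanishing of $\mu_{2m-1,M}$ on $\sum a_{ij}\,\omega_i\wedge\omega_j$, then expand the derivatives of the monomial sections $x^i\,dx/y$ locally, collect by the power of $x$, and factor the antisymmetrized falling factorials to obtain \eqref{equationsmuk}. The paper merely packages the same computation as an induction on $k$, treating only the new equations at each step, which is equivalent to your unwinding over all $m\le k$.
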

\begin{proof}
We will argue locally.  Take the basis of $H^0(C,K_C)$ given in \eqref{base} and the one for $H^0(C,M)$ given in \eqref{basedusygvh}. We will proceed by induction on $k$. The thesis holds for $k=1$ by Lemma \ref{lemmanahMus}. Now we suppose that the thesis holds for $k-1$, $k \geq 2$ and take $Q \in Ker(\mu_{2k-2})$.  Since $Q$  also belongs to the previous kernels, 
proving the lemma  is   equivalent to prove that  $Q \in Ker(\mu_{2k})$ if and only if 
\begin{equation}
\sum \limits_{\substack{1 \leq i < j \\ i+j=l \\ i \geq k-1, \\ j \geq k-1 }} a_{ij} (j-i) ij(i-1)(j-1)...(i-(k-2))(j-(k-2))=0,
\end{equation}
for all $ 2k-1 \leq l \leq 2g-3$. Using lemma \ref{lemmasss} we have that  $\mu_{2k}(Q)=0$ if and only if $\mu_{2k-1,M} ( \sum \limits_{1 \leq i < j \leq g-1}a_{ij} (\alpha_i \wedge \alpha_j))=0$, that is if and only if
\begin{equation}
\label{ewuyequat}
\sum \limits_{1 \leq i < j \leq g-1}a_{ij} ((x^i)^{(k-1)}(x^j)^{(k)}-(x^i)^{(k)}(x^j)^{(k-1)}) \equiv 0,
\end{equation}
where we are taking $x$ as local coordinate. Observe that \eqref{ewuyequat} is equal to 
\begin{align}
&\sum \limits_{\substack{1 \leq i < j \leq g-1\\ i \geq k-1 \\ j \geq k }}a_{ij}i...(i-(k-2))x^{(i-(k-1))}j...(j-(k-1))x^{j-k}
\\
&-\sum \limits_{\substack{1 \leq i < j \leq g-1\\ i \geq k \\ j \geq k-1 }}a_{ij}i...(i-(k-1))x^{(i-k)}j...(j-(k-2))x^{j-(k-1)},
\end{align}
which can be written after some simple algebraic manipulations as 
\begin{equation}
\sum \limits_{\substack{1 \leq i < j \leq g-1\\ i \geq k-1 \\ j \geq k-1 }}x^{(i+j-(2k-1))}a_{ij}i...(i-(k-2))j...(j-(k-2))(j-i).
\end{equation}
So we have shown that for $Q \in Ker (\mu_{2k-2}$),  $\mu_{2k}(Q)=0$ if and only if  for every $  2k-1 \leq l \leq 2g-3$
$$
\sum \limits_{\substack{1 \leq i < j \leq g-1\\ i+j=l \\ i \geq k-1 \\ j \geq k-1 }}a_{ij}(j-i)i...(i-(k-2))j...(j-(k-2))=0.
$$
This concludes the proof. 
\end{proof}
Now we come to the proof  Theorem \ref{rteoremainrese}. The strategy of the proof is clear: we have to determine the number of linearly independent equations in \eqref{equationsmuk}.


\begin{proof}[Proof of Theorem \ref{rteoremainrese}]
    We will start introducing some notations. For every $i,j$ such that $1 \leq i < j \leq g-1$, for every $l=3,...,2g-3$, let $(w_{l})^{2}$ the vector whose coordinates are
$$
(w_{l})^{2}_{ij}=
\begin{cases}
(j-i) \ \ \text{if}\   i+j=l \\
0 \ \ \text{if} \ i+j \neq l
\end{cases}
$$
which are the coefficients of the $l$-th equation of the set of equations given in Lemma \ref{lemmasadcx}, describing $Ker(\mu_2)$, ordered by increasing values of $i$. For every $2 \leq r \leq k$ and  $l=2r-1,...,2g-3$, let $w_l^{(2r)}$ be the vector whose coordinates are
\begin{equation}
\label{fdfjbcseref}
(w_{l})^{(2r)}_{ij}=
\begin{cases}
(j-i)ij(i-1)(j-1)...(i-(r-2))(j-(r-2)) \ \ \text{if}\   i+j=l, i,j \geq r-1 \\ 
0 \ \ \text{if} \ i+j \neq l, \ \text{or} \  1 \leq i \leq r-2.
\end{cases}
\end{equation}
which  are the coefficients of the $l$-th equation of the set of equations which describe $Ker(\mu_{2r})$ inside $(Ker(\mu_{2(r-1)}))$, ordered by increasing values of $i$. 
\\
\\
For any $3 \leq l \leq 2g-3$ set
\begin{equation}
n_l=\#\{(i,j): 1\leq i <j \leq g-1,  \  i+j=l\}. 
\end{equation}
 and let $B'_{k,l}$ be the $k \times n_l$ matrix whose $r$th row, $ 1\leq r \leq k$, are the $n_l$ coordinates of  $w_{l}^{(2r)}$ corresponding to the indexes $(i,j)$ such that $i+j=l$. 
\\
\\
Observe that $B'_{k,l}$ is just the matrix with rows $w_l^{(2)},...,w_l^{(2k)}$ where we have removed the entries corresponding to $i+j \neq l$. Notice that these entries are all $0$ for all the  vectors $w_l^{(2r)}$.
\\
\\
Define $c_{k,l}=min\{n_l,k\}$ and let $B_{k,l}$ be the minor of  $B'_{k,l}$ of order $c_{k,l}$ given by the first $c_{k,l}$ rows and $c_{k,l}$ columns. We are going to prove that it is not zero for all $ 3\leq l \leq 2g-3$ by induction on $k$. 
\\
\\
Consider first the case $k=2$. Observe that if $l=3,4,2g-4,2g-3$, then $n_l=1$. In this case $B_{2,l}=j-i>0$ where $(i,j)$ is the only pair such that $i+j=l$. If $ 5 \leq l \leq 2g-5 $ then $n_l \geq 2$ and $c_{2,l}=min(2,n_l)=2$ and we have 
$$
B_{2,l}=
\begin{pmatrix}
    j-i & j-i-2 \\
    ij(j-i) & (j-i-2)(i+1)(j-1)
\end{pmatrix}
$$
where $i$ is the minimum such that $i+j=l$. Observe that $$
det(B_{2,l})=(j-i)(j-i-2)
det\begin{pmatrix}
    1 & 1 \\
    ij & (i+1)(j-1)
\end{pmatrix}
$$
Set
$$
A_{2,l}:=
\begin{pmatrix}
    1 & 1 \\
    ij & (i+1)(j-1)
\end{pmatrix}
$$
Subtracting the first column of $A_{2,l}$ from the second we obtain the matrix
$$
\begin{pmatrix}
    1 & 0 \\
    ij & j-i-1
\end{pmatrix}
$$
whose determinant is $j-i-1>0$. Now assume that for every  $ 2\leq  k_0 \leq k$ and for every $3 \leq l \leq  2g-3$, $B_{k,l}$ is not zero. We want to prove that the same holds for $k+1$. If $n_l \leq k$, then $n_l < k+1$,  $c_{k+1,l}=min\{n_l,k\}=n_l$ and  $B_{k+1,l}=B_{k,l}$. In this case, we have nothing to prove. Assume then $n_l > k$. Then $c_{k+1,l}=min\{n_l,k+1\}=k+1$. Then $B_{k+1,l}$ is the following $(k+1) \times (k+1)$ matrix
\begin{center}
{\tiny
$$
\begin{pmatrix}
j -i & j-i-2& ...& j-i-2k \\
(j -i)ji &(j-2-i)(i+1)(j-1)&...& (j-i-2k)(j-k)(i+k) \\
. &. &.& . \\
. &. & .&  . \\
. &. & .&  . \\
(j-i)ji...(j-(k+1-2))(i-(k+1-2)) & & ...& (j-i-2k)(j-k)(i+k)...(j-2k+1)(i+1)
\end{pmatrix}
$$}
\end{center}
where $i$ is the minimum such that $i+j=l$ and where we mean that every entry in the $r$-th row, $1\leq r \leq k+1 $ (corresponding to an index $(i,j)$) is zero whenever $i \leq r-1$.

The determinant of the above matrix is equal to the product of  $(j-i)...(j-i-2k)$ by the determinant of the following matrix 
\begin{center}
{\tiny
$$
\begin{pmatrix}
1 & 1& ...& 1 \\
ji &(i+1)(j-1)&...& (j-k)(i+k) \\
. &. &.& . \\
. &. & .&  . \\
. &. & .&  . \\
(j-i)ji...(j-(k+1-2))(i-(k+1-2)) & & ...& (j-k)(i+k)...(j-2k+1)(i+1)
\end{pmatrix}
$$}
\end{center}

Now consider the matrix above and subtract each column to the previous one. Then one gets a matrix 
\begin{center}
{\tiny
$$
\begin{pmatrix}
1 & 0& ...& 0 \\
ji &(j-1)-i&...& (j-1)-i-2(k-1) \\
. &. &.& . \\
. &. & .&  . \\
. &. & .&  . \\
(j-i)ji...(j-(k+1-2))(i-(k+1-2)) & & ...& (j-1-i-2(k-1))(j-1-(k-1))(i+k-1)...(j-2(k-1)+1)(i+1)
\end{pmatrix}.
$$}
\end{center}
Observe that 
\begin{center}
{\tiny
$$
\begin{pmatrix}
(j-1)-i&...& (j-1)-i-2(k-1) \\
. &.& . \\
. & .&  . \\
. & .&  . \\
(j-1-i)(j-1)i...((j-1-(k-2)))(i-((k-2)))& ...& (j-1-i-2(k-1))(j-1-(k-1))(i+k-1)...(j-2(k-1)+1)(i+1)
\end{pmatrix}.
$$}
\end{center}
is $B_{k,l-1}$ (where as we have already said,  we mean that every entry corresponding to a index $(i,j)$ in the $r$-th row, $1\leq r \leq k $ is zero whenever $i \leq r-1$. The matrix has not zero determinant  by the inductive hypothesis. Hence we have shown, by induction,  that for any $ 3 \leq l \leq 2g-3$ and for any $k \geq 2$, $(w_{l})^{2}$,...,$(w_{l})^{2k}$ impose exactly $c_{k,l}$
 linearly independent conditions.
 \\
 \\
 Now let  $s_k$  be the number of  the indexes $l$ such that $c_{k,l} > c_{k-1,l}$. Observe that the condition  $c_{k,l} > c_{k-1,l}$ is equivalent to say that $l$-th equations of $Ker(\mu_{2k})$ are independent from the $l$-th equations of $Ker(\mu_{2}),...,Ker(\mu_{2(k-1)})$. Then we have
 \begin{equation}
    dim(Ker(\mu_{2k})= dim Ker(\mu_{2(k-1)}) - s_k. 
 \end{equation}
 Notice  that $c_{k,l} > c_{k-1,l}$  if and only if  $k-1 < n_l$. We claim that this happens if and only if $2k+1\leq l \leq 2g-(2k+1).$ 
 \\
 \\
In fact, if $2k+1\leq l \leq 2g-(2k+1)$, we have at least $k$ coordinates $(i,j)$ such that  $i+j=l$. Indeed write $l$ as  $l=2k+1+m$, $0 \leq m \leq 2g-2(2k+1)$ (where  we are using the assumption  $2g-2(2k+1) \geq 0$). 
 \\
 If $l$ is odd (equivalently $m$  is even), these are given by 
$$
(i,j)=(1+\frac{m}{2},2k+\frac{m}{2}),...,(k+\frac{m}{2},k+1+\frac{m}{2})
$$
These are easily seen to be  admissible indexes $(i,j)$, that is they satisfy $ 1 \leq i < j$, $i+j=l$ and $j \leq g-1$. 
If $l$ is even (equivalently $m$  is odd), these are given by 
$$
(i,j)=(1+\lfloor \frac{m}{2}\rfloor,2k+\lfloor\frac{m}{2})\rfloor,...,(k+\lfloor\frac{m}{2}\rfloor,k+2+\lfloor\frac{m}{2}\rfloor).
$$
This shows that $n_l \geq k$. 

On the other hand, if $l \leq 2k$ or $l \geq  2g -2k$, it is easy to see that the number of admissible indexes $(i,j)$ such that $i+j=l$ is strictly less than $k$.
 We then conclude that $s_k=2g-(2k+1)-2k=2g-(4k+1)$  and hence $dim(Ker(\mu_{2k})=dim Ker(\mu_{2(k-1)})-s_k=dim Ker(\mu_{2(k-1)})-(2g-(4k+1)) $.
 So we have proven that $Rank(\mu_{2k}) = 2g-(4k+1)$. 

By the equality: $dim(Ker(\mu_{2k}))=dim Ker(\mu_{2(k-1)})-(2g-(4k+1)) $, and using that $dim(I_2) = \frac{(g-1)(g-2)}{2}$, we get
$$dim(Ker(\mu_{2k})) = \frac{(g-1)(g-2)}{2}- \sum_{i=1}^k (2g-(4i+1))= \frac{(g-1)(g-2)}{2}+ k(2k-2g+3),$$
for every $k \leq \frac{g-1}{2}$. 
So if $g$ is odd and $k = \frac{g-1}{2}$, we  have $dim(Ker(\mu_{2k}))= dim(Ker(\mu_{g-1})) =0$, hence for $k>  \frac{g-1}{2}$, $\mu_{2k}\equiv 0$.
If $g$ is even, and $k = \frac{g-2}{2}$, we  have $dim(Ker(\mu_{2k}))= dim(Ker(\mu_{g-2})) =0$, hence for $k>  \frac{g-2}{2}$, $\mu_{2k}\equiv 0$, since the domain is $0$.
\\

\end{proof}


\newpage
\section{Schiffer variations and Weierstrass points}
\label{Schiffer}
In this section we recall the definition and the basic properties of (higher) Schiffer variations. 

Let $C$ be a smooth curve of genus $g >1$, take a point $p \in C$ and fix a local coordinate $z$ centred in $p$. For $1 \leq n \leq 3g-3$, we define the $n^{th}$ Schiffer variation  at $p$ to be the element $\xi_p^n \in H^1(C, T_C) \cong H^{0,1}_{\bar{\partial}}(T_C)$ whose Dolbeault representative is given by $\frac{ \bar{\partial}\rho_p}{z^n} \frac{\partial}{\partial z}$, where $\rho_p$ is a bump function in $p$ which is equal to one in a small neighborhood $U$ containing $p$, $\xi_p^n = [\frac{ \bar{\partial}\rho_p}{z^n} \frac{\partial}{\partial z}]$. Clearly  $\xi_p^n$ depends on the choice of the local coordinate $z$. Take $1\leq n \leq 3g-3$. Consider the exact sequence 
	$$0 \rightarrow T_C  \rightarrow T_C(np)  \rightarrow T_C(np)_{|np} \rightarrow 0, $$
and the induced exact sequence in cohomology: 
$$ 0 \rightarrow H^0(T_C(np)) \rightarrow H^0(T_C(np)_{|np}) \stackrel{\delta^n_p}\rightarrow H^1(T_C). $$
By Riemann Roch, if $n <2g-2$, or $n \leq 3g-3$ and $p$ is a general point, we have: $$ h^0(T_C(np)) =0. $$ Hence we have an inclusion $$ \delta^n_p: H^0(T_C(np)_{|np})  \cong {\mathbb C}^n \hookrightarrow H^1(T_C)$$ and the image of $\delta^n_p$ in $H^1(C,T_C)$ is the $n$-dimensional subspace generated by $\xi_p^1,...,\xi_p^n$ (see \cite{fredianihigher}) for more details). 

Given an element $\zeta \in H^1(C, T_C)$, consider the map given by cup product: 
$$\cup \zeta: H^0(C, K_C) \rightarrow H^1(C, {\mathcal O}_C).$$
The rank of $\zeta$ is by definition the rank of the map $\cup \zeta$. 
\begin{remark}
\label{rank}

    Notice that $H^0(K_C(-np)) \subset Ker (\cup \xi_p^n)$, for $n \leq g$. 
In fact, $\omega \in ker (\cup \xi_p^n)$ if and only if for any $\alpha \in H^0(K_C),$ we have $\xi_p^n(\omega\alpha) =0$. So if $z$ is a local coordinate around $p $ and $\omega = f(z) dz$, $\alpha= g(z) dz$ are local expressions, by \cite[Lemma 2.2]{fredianihigher} we have: 
$$\xi_p^n(\omega \alpha) = \frac{2 \pi i}{(n-1)!} (fg)^{(n-1)}(p) =0,$$
if $\omega \in H^0(K_C(-np))$, $\forall \alpha \in H^0(K_C)$.
Hence $\xi_p^n$ has rank $\leq n$. 
\end{remark}


Let $\mathcal{H}_g$ be the moduli space of hyperelliptic curves of genus $g$. If $C$ is  a hyperelliptic curve, then the tangent space of $\mathcal{H}_g$ at $[C]$ is $H^1(T_C)^+$, that is the invariant subspace with respect to the hyperelliptic involution $\sigma$.\\
Assume $p \in C$ is a Weierstrass point.  From the Dolbeault representation of the Schiffer variations, one immediately sees that the elements $\xi_p^{2k+1}$ are $\sigma$-invariant, hence they belong to $H^1(T_C)^+$. So we have a subspace 
$$\langle \xi_p^1,\xi_p^3,...,\xi_p^{2g-3}\rangle \subset H^1(T_C)^+$$ of dimension $g-1$. 

Denote as usual by $|L|$ the $g^1_2$.  We conclude this section giving a basis of $H^0(K_C \otimes L^{\vee})$ that will be useful  in the next sections. 

\begin{remark}
\label{omegai}
Consider the hyperelliptic curve $C$ with equation \eqref{hypereq}. 
    Fix the Weierstrass point $p=(0,0)$. For the computations of the next section it is more convenient to use the basis of $H^0(M)$ given by $\omega_i: = \alpha_{g-i}=\frac{x^{g-i}dx}{y}$, $i =1,...,g-1$.  Notice that we have  
    $$ord_p\omega_k = 2g-2k-2, \ for  \ all  \ k=1,...,g-1.$$
     Hence we get:
                \begin{itemize}
            \item $H^0(M)= \langle \omega_1,..., \omega_{g-1}\rangle.$
            \item $H^0(M(-p))= H^0(M(-2p))=\langle \omega_1,..., \omega_{g-2}\rangle.$
            
             $\vdots$
             
            \item $H^0(M(-(2g-4)p)) = H^0(M(-(2g-5)p)) =   \langle \omega_1\rangle.$

        \end{itemize}
    
    So if we take a quadric  $Q=\sum \limits_{1 \leq i < j \leq g-1}a_{ij}(s \alpha_i \odot t \alpha_j -   s \alpha_j \odot t \alpha_i) \in I_2,$ as in Lemma \ref{lemmanahMus} or Lemma \ref{lemmasadcx}, we have: 
    $$ Q=\sum \limits_{1 \leq i < j \leq g-1}a_{ij}(s \alpha_i \odot t \alpha_j -   s \alpha_j \odot t \alpha_i)=-\sum \limits_{1 \leq k < h \leq g-1}a_{g-h, g-k}(s \omega_k \odot t \omega_h -   s \omega_h \odot t \omega_k)=$$
 $$=    \sum \limits_{1 \leq k < h \leq g-1}b_{k, h}(s \omega_k \odot t \omega_h -   s \omega_h \odot t \omega_k),$$
 with $b_{k,h} = -a_{g-h,g-k}$.

\end{remark}
\section{Second fundamental form of the Hyperelliptic Torelli locus}
\label{second}
Let $C$ be a hyperelliptic curve of genus $g \geq 3$, $L$ be the line bundle giving the $g^1_2$. Set $M = K_C\otimes L^{\vee}$ and $\pi:C \rightarrow {\mathbb P}^1$ the map induced by $|L|$.  

Denote by $\sigma$ the hyperelliptic involution and consider the decomposition    $H^0(C, K_C^{\otimes 2}) \cong H^0(C, K_C^{\otimes 2})^+ \oplus  H^0(C, K_C^{\otimes 2})^-$ of  $H^0(C, K_C^{\otimes 2})$ in invariant and anti-invariant subspaces under the action of $\sigma$.

	Denote by ${\mathcal H}_g$ the hyperelliptic locus in ${\mathcal M}_g$ and by $$j_{h}: {\mathcal H}_g \rightarrow {\mathcal A}_g$$ the restriction of the Torelli map to ${\mathcal H}_g$. We endow  ${\mathcal A}_g$ with the Siegel metric, that is the orbifold metric induced by the symmetric metric on the Siegel space $Sp(2g, {\mathbb R})/U(g)$ of which ${\mathcal A}_g$ is the quotient under the action of $Sp(2g, {\mathbb Z})$. The map $j_h$ is an orbifold immersion (see \cite{os}) and we have the following tangent bundle exact sequences


\begin{equation}
 \begin{tikzcd}
 \label{tangent}
 & & 0 \ar[d] & & &\\
    &0\ar[r] &  T_{\mathcal {H}_g}\ar[d]\ar[r] &  {T_{\mathcal{A}_g}}_{|{\mathcal {H}_g}}\ar[d, "="] \ar[r] & N_{{\mathcal{H}_g}_{/\mathcal{A}_g}} \ar[r]\ar[d] & 0\\
    &  &T_{{\mathcal{M}_g}|\mathcal{H}_g} \ar[r]& T_{{\mathcal{A}_g}|\mathcal{H}_g}\ar[r]& N_{{\mathcal{M}_g}_{/\mathcal{A}_g}|\mathcal{H}_g}\ar[r] &0\\
    & & & &  &
\end{tikzcd}
\end{equation}
Denote by 
\begin{equation}
\rho_{HE}: N^*_{\mathcal{H}_g|\mathcal{A}_g}
 \rightarrow Sym^2 \Omega^1_{\mathcal{H}_g},
\end{equation}
the dual of the second fundamental form of $j_h$. 

At a point $[C] \in {\mathcal H}_g$, the dual of \eqref{tangent} is

\begin{tikzcd}
\label{cotangent}
        &0 \ar[r] & I_2\ar[d] \ar[r]& S^2 H^0(K_{C})\ar[d, "="]\arrow[r,"\mu_0"]& H^0(K_{C}^{\otimes 2})\ar[d] &\\
    &0\ar[r] & I_2\ar[r] &  S^2 H^0(K_{C})\arrow[r,"\mu_0"] &  H^0(K_{C}^{\otimes 2})^+ \ar[r]& 0
\end{tikzcd}

where $m$ is the multiplication map 
and $I_2$ can be identified with the vector space of quadrics containing the rational normal curve.

So at a point $[C]$ the map $\rho_{HE}$ is a linear map 

$$I_2 \rightarrow Sym^2( H^0(C, K_C^{\otimes 2})^+).$$
In \cite[Prop. 5.1]{cftrans} it is proven that $\forall Q \in I_2$, $\forall v,w \in H^1(T_C)^+$ we have 
\begin{equation}
\label{rhoh}
\rho_{HE}(Q) (v \odot w) = \rho(Q)(v \odot w),
\end{equation}
where  $\rho$ is the Hodge Gaussian map introduced in \cite[Proposition-Definition 1.3]{cpt}. 

In \cite{fp} the second fundamental form $\rho_{HE}$ of the hyperelliptic locus has been studied using the Hodge Gaussian map $\rho$ and the second gaussian map of the canonical bundle.  In \cite[Theorem 6.2]{fp} it has been  proven that if $Y$ is a germ of totally geodesic subvariety  of ${\mathcal A}_g$ generically contained in the hyperelliptic Torelli locus, then its dimension is at most $g+1$. 

In \cite{fredianihigher} a relation between higher even
Gaussian maps of the canonical bundle on a smooth projective curve
of genus $g  \geq 4$ and the second fundamental form of the Torelli map is given. 
This is a generalisation of a result obtained by Colombo, Pirola and Tortora on
the second Gaussian map and the second fundamental form in \cite{cpt} (see also \cite[Theorem 2.2]{cfg}). 
In fact, in  \cite[Theorem 2.2]{cfg}) it is proven that for any $p \in C$, and for any $Q \in I_2$, we have 
\begin{equation}
\label{mu2}
    \rho(Q)(\xi_p \odot \xi_p) = -2 \pi i \mu_2(Q)(p).
\end{equation}

More precisely, the computations of $\rho(Q)$ on Schiffer variations given in \cite[Proposition 3.1 and Remark 3.2]{fredianihigher}, for quadrics $Q \in Ker(\mu_{2k})$ will be the main tool in the next section.

\section{Isotropic subspaces}
\label{iso}


In this section, given a hyperelliptic curve $C$ of genus $g \geq 3$ and a Weierstrass point $p \in C$,  we will determine for any   $k \leq \lfloor\frac{g-3}{2} \rfloor$, the maximum $m \leq \lfloor \frac{g-1}{2} \rfloor$, such that the subspace $V_m = \langle \xi_p^1, \xi_p^3, ..., \xi_p^{2m+1} \rangle \subset V_{\lfloor \frac{g-1}{2} \rfloor}  \subset H^1(T_C)^+$ is isotropic with respect to all the quadrics $\rho(Q)$, $\forall Q \in Ker(\mu_{2k})$. We will show that $m = k$, namely that $V_k$ is isotropic for $\rho(Q)$, $\forall Q \in Ker(\mu_{2k})$, while $V_{k+1}$ is not (see Theorems \ref{thm1}, \ref{thm3}).

In order to do this we recall that, by \eqref{rhoh}, it suffices to compute $\rho(Q) (\xi_p^r \odot \xi_p^n)$, for odd $r,n$. 

To do this we will use \cite[Remark 3.2]{fredianihigher}, that we now recall for the reader's convenience. 
\begin{remark} (\cite[Remark 3.2]{fredianihigher})
\label{remarkhigher} 
Let  
$Q=\sum \limits_{\alpha,\beta=1}^u b_{\alpha \beta} \gamma_{\alpha}\odot \gamma_{\beta}$  be a  quadric in $I_2$, where $\gamma_1,...,\gamma_u$ are elements of $H^0(K_C)$. Choose a local coordinate $z$ around $p$ and take a local expression of $\gamma_{\alpha}$ around $p$:  $\gamma_{\alpha}=g_{\alpha}(z)dz$.

Assume moreover that 
$$\sum_{\alpha, \beta} c_{\alpha \beta} g_{\alpha}^{(h)}(0) g_{\beta}^{(l)}(0)= 0, \ \forall h,l \geq 0, \ h +l \leq m,$$ 
then we have 
$$\rho_{HE}(Q)(\xi_p^n \odot \xi_p^r) = \rho(Q)(\xi_p^n \odot \xi_p^r) = 0, \ if \  r+n  \leq m, \ \forall  \ \text{odd}  \ r,n \geq 1,$$
and if $r +n = m+1$ 
$$ \rho_{HE}(Q)(\xi_p^n \odot \xi_p^r) =\rho(Q)(\xi^n_p \odot \xi^r_p) =  2 \pi i  \left( \sum_{k=0}^{n-1} \left (  \sum^u_{\alpha, \beta=1}c_{\alpha \beta} g^{(m+1-k)}_{\alpha}(0) g_{\beta}^{(k)}(0)\right)\frac{(n-k)}{k!(m+1-k)!}\right).$$

\end{remark}
  Recall that a basis of $I_2$ is given by the following quadrics (see Lemma \ref{fhgf}):  
  \begin{equation}
 \label{basis}
Q_{ij}:=s\omega_i \odot t\omega_j -s\omega_j \odot t\omega_i
 \end{equation}
for $1\leq i<j\leq g-1$, where $L$ is the $g^1_2$, $H^0(L) = \langle s, t \rangle$, and $H^0(M) = H^0(K_C \otimes L^{\vee}) = \langle \omega_1, ..., \omega_{g-1} \rangle $, where the sections $\omega_i$ are as in Remark \ref{omegai}. Let $Q$ be a quadric in $I_2$, then $$Q=\sum\limits_{1\leq i<j\leq g-1} b_{ij}Q_{ij}=\sum \limits_{\alpha,\beta=1}^u c_{\alpha \beta} \gamma_{\alpha}\odot  \gamma_{\beta},$$ where $\gamma_1,...,\gamma_u$ are elements of $H^0(K_C)$. Choose a local coordinate $z$ around $p$ and take a local expression of $\gamma_{\alpha}$ around $p$:  $\gamma_{\alpha}=g_{\alpha}(z)dz$.\\
Now let $k\geq 0$ and assume $Q\in Ker(\mu_{2k})$. Recall  from Section \ref{secidsyhbcvT} that 
$$ Q\in Ker(\mu_{2k}) \Leftrightarrow \sum \limits_{\alpha,\beta=1}^u c_{\alpha \beta} g_{\alpha}^{(h)} g_{\beta}^{(l)} \equiv 0 \quad \forall h+l\leq 2k+1.$$
Moreover by Lemma \ref{lemmasss} we have

$$ \mu_{2k+2}(Q)=\sum \limits_{\alpha,\beta=1}^u c_{\alpha \beta} g_{\alpha}^{(h)} g_{\beta}^{(l)}(dz)^{2k+4}=\mu_{1,L}(s \wedge t)\mu_{2k+1,M}(\sum_{1 \leq i <j\leq g-1} b_{ij} (\omega_i \wedge \omega_j))(dz)^{2k+4}=$$ $$=(f'g-fg')\sum \limits_{1\leq i<j\leq g-1}b_{ij}(f_i^{(r)}f_j^{(s)}-f_i^{(s)}f_j^{(r))}) (dz)^{2k+4}\quad \forall h+l=2k+2, \quad \forall r+s=2k+1,$$
where in a local coordinate we have $s = f(z) T$, $t = g(z) T$,  $\omega_i = f_i(z) dz \otimes T^{\vee} $, $\forall i$. 

It follows that $$\sum \limits_{\alpha,\beta=1}^u c_{\alpha \beta} g_{\alpha}^{(h)} g_{\beta}^{(l)}(p)=0 \quad \forall h+l=2k+2,$$ because $(f'g-fg')(p)=0$, since $p$ is a Weierstrass point.\\

Now we want to determine the least $m \geq 2k+3$ such that 

$$\sum \limits_{\alpha,\beta=1}^u c_{\alpha \beta} g_{\alpha}^{(h)} g_{\beta}^{(l)}(p)\neq 0, $$
for some $h,l$ with $h+l=m.$ 


\begin{lemma}
\label{lemma1}
    Let $Q=\sum \limits_{1\leq r<m\leq g-1} b_{r,m}Q_{r,m} \in I_2$, where the quadrics $Q_{r,m}$ are considered with respect to the basis $\{\omega
    _i\}_{i=1,...,g-1}$ given in Remark \ref{omegai}. If $Q\in Ker(\mu_{2k})$, we have
    $$
    b_{r,m}=0 \quad \forall \quad  r+m\geq 2g-(2k+2) \qquad (hence \ m\geq g-k).
    $$
\end{lemma}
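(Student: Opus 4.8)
The statement asserts that if $Q = \sum_{1 \le r < m \le g-1} b_{r,m} Q_{r,m} \in \operatorname{Ker}(\mu_{2k})$, written in the basis $\{\omega_i\}$ of Remark \ref{omegai} (where $\operatorname{ord}_p \omega_i = 2g-2i-2$), then $b_{r,m} = 0$ whenever $r+m \ge 2g-(2k+2)$. The plan is to translate the membership $Q \in \operatorname{Ker}(\mu_{2k})$ into vanishing of low-order derivatives at $p$ of the local functions defining $Q$, and then to read off which coefficients $b_{r,m}$ are forced to vanish by an order-of-vanishing count at the Weierstrass point $p$.

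First I would set up local coordinates at $p=(0,0)$: take $x$ as local coordinate (recall $x$ vanishes to order $1$ at $p$ since $y^2 = x\prod_{i=2}^{2g+2}(x-t_i)$, wait — actually at a Weierstrass point the function with the simplest behaviour is $x$, with $\operatorname{ord}_p x = 2$ because $x$ has a double zero at the ramification point; let me instead use a genuine local coordinate $z$ with $z^2 \sim x$). Writing $\omega_i = g_i(z)\,dz$ locally, the condition $\operatorname{ord}_p \omega_i = 2g - 2i - 2$ means $g_i$ vanishes to order exactly $2g-2i-2$ at $z=0$, i.e. $g_i^{(h)}(0) = 0$ for all $h < 2g-2i-2$ and $g_i^{(2g-2i-2)}(0) \ne 0$. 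Next, writing $Q = \sum c_{\alpha\beta}\gamma_\alpha\gamma_\beta$ and recalling from Section \ref{secidsyhbcvT} (equation \eqref{equazionenanullamenteeytr}) that $Q \in \operatorname{Ker}(\mu_{2k})$ is equivalent to $\sum c_{\alpha\beta} g_\alpha^{(h)} g_\beta^{(l)} \equiv 0$ for all $h+l \le 2k+1$, and moreover — as derived in the paragraph preceding the lemma — that $\sum c_{\alpha\beta} g_\alpha^{(h)} g_\beta^{(l)}(p) = 0$ for $h+l = 2k+2$ as well (using that $p$ is a Weierstrass point so $s't - t's$ vanishes there). So the effective vanishing goes up to total order $2k+2$ at the point $p$.

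The heart of the argument is then the following observation: with $s = gT$, $t = fT$ local expressions of the $g^1_2$ sections, the quadric $Q_{r,m}$ contributes to $\sum c_{\alpha\beta} g_\alpha^{(h)} g_\beta^{(l)}$ a term built from products $(g_r)^{(a)}(g_m)^{(b)}$ (times factors from $f,g$). Since $g_r$ vanishes to order $2g-2r-2$ and $g_m$ to order $2g-2m-2$, the product $g_r \cdot g_m$ (and hence the relevant bilinear combination appearing in the local expression of $\mu_{2k}$) vanishes to order $(2g-2r-2)+(2g-2m-2) = 4g - 2(r+m) - 4$. The coefficient $b_{r,m}$ can only be "detected" by one of the vanishing conditions of total order $\le 2k+2$ if this order of vanishing is $\le 2k+2$, i.e. $4g - 2(r+m) - 4 \le 2k+2$, which rearranges to $r+m \ge 2g - k - 3$. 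I would argue more carefully: order the pairs $(r,m)$ by decreasing $r+m$; for the maximal value $r+m = 2g-3$ there is a unique pair $(g-2, g-1)$, and the lowest-order nonzero derivative combination that sees $b_{g-2,g-1}$ occurs at total order exactly $4g - 2(2g-3) - 4 = 2$. Since $2 \le 2k+2$ for all $k \ge 0$, the vanishing condition at order $2$ forces (after checking the other pairs cannot interfere at that order — they have strictly higher vanishing order) $b_{g-2,g-1} = 0$. Descending inductively on $r+m$, at level $r+m = 2g - 2 - j$ the relevant derivative combinations appear at total order $4g - 2(2g-2-j) - 4 = 2j$, and so as long as $2j \le 2k+2$, i.e. $j \le k+1$, i.e. $r+m \ge 2g-(2k+2)$, the coefficients $b_{r,m}$ are killed one level at a time; at each step the already-proven vanishing of higher-$(r+m)$ coefficients removes the only other potential contributions to that order, leaving a triangular linear system whose diagonal entries are the nonzero leading coefficients $g_r^{(2g-2r-2)}(0)\,g_m^{(2g-2m-2)}(0)$ (times a nonzero constant from $f',g$ etc.), hence $b_{r,m}=0$.

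The main obstacle I anticipate is the bookkeeping in the inductive/triangular step: one must verify that at total order $2j$ the only surviving contributions come from pairs $(r,m)$ with $r+m = 2g-2-j$, that the contributions from pairs with larger $r+m$ genuinely vanish (they do, by induction, since those $b_{r,m}$ are already $0$) and from pairs with smaller $r+m$ also vanish at order $2j$ (they do, by the order-of-vanishing count, since their minimal nonzero order $2j' > 2j$). A secondary subtlety is handling the factors coming from the local expansions of $f,g$ (the $g^1_2$ sections) — one needs that $g(0) \ne 0$ or $f'(0) \ne 0$ appropriately, which holds because $p$ is a Weierstrass point so $\{s,t\}$ has a simple ramification there — but this only rescales the diagonal entries by nonzero constants and does not affect invertibility of the triangular system.
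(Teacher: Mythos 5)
There is a genuine gap, and it lies in the very mechanism you propose. Your plan is to extract the vanishing of the $b_{r,m}$ from the pointwise conditions $\sum_{\alpha,\beta} c_{\alpha\beta}g_\alpha^{(h)}(0)g_\beta^{(l)}(0)=0$ for $h+l\leq 2k+2$, which are indeed the only conditions at $p$ available a priori from $Q\in Ker(\mu_{2k})$. But a condition of total order $N$ has identically zero coefficient on $b_{r,m}$ unless $N\geq \operatorname{ord}_p\omega_r+\operatorname{ord}_p\omega_m+2=4g-2(r+m)-2$: expanding as in \eqref{eq2}, the contributions of total order $\operatorname{ord}_p\omega_r+\operatorname{ord}_p\omega_m$ and $\operatorname{ord}_p\omega_r+\operatorname{ord}_p\omega_m+1$ are multiplied by $(st-ts)(p)=0$ and $(s't-ts')(p)=0$ (Weierstrass point). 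In particular your claim that $b_{g-2,g-1}$ is already detected at total order $2$ is false (all order-$2$ conditions are identically zero; this coefficient first appears at order $4$), and the step ``$2j\leq 2k+2$, i.e.\ $j\leq k+1$, i.e.\ $r+m\geq 2g-(2k+2)$'' is an arithmetic slip: $j\leq k+1$ gives $r+m\geq 2g-k-3$, exactly the weaker threshold you computed two lines earlier. So only the coefficients with $r+m\geq 2g-k-2$ occur at all in the conditions you allow yourself, whereas the lemma asserts vanishing for $r+m\geq 2g-(2k+2)$: already for $k=1$ the coefficient $b_{g-3,g-1}$ does not enter any condition of total order $\leq 4$, hence no triangular or inductive argument based on these conditions can force it to vanish. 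You also cannot appeal to vanishing at $p$ of higher total order (up to $4k+3$), since that is Theorem \ref{thm1}, which is proved \emph{using} this lemma.

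What actually proves the lemma is the full identical vanishing, not its $(2k+2)$-jet at $p$. The paper passes to the coefficients $a_{ij}=-b_{g-j,g-i}$ and uses the explicit equations of Lemma \ref{lemmasadcx}, i.e.\ the coefficient of every power of $x$ in the identities $\mu_{2m}(Q)\equiv 0$ for $m\leq k$: arguing by induction on $k$ (so that $a_{ij}=0$ for $i+j\leq 2k$ is already known), at each of the two remaining levels $l=2k+1$ and $l=2k+2$ one has exactly the $k$ unknowns $a_{1,l-1},\dots,a_{k,l-k}$ and $k$ equations, whose matrix is invertible by the minor computation ($B_{k,l}\neq 0$) in the proof of Theorem \ref{rteoremainrese}; hence $a_{ij}=0$ for $i+j\leq 2k+2$, which is the statement. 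A purely local argument at $p$ would require controlling the sums $\sum c_{\alpha\beta}g_\alpha^{(h)}g_\beta^{(l)}(p)$ up to total order $4g-2(r+m)-2$, i.e.\ up to $4k+2$, and this control is not available for a general $Q\in Ker(\mu_{2k})$ independently of the lemma itself.
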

\begin{proof}
Recall that we have $Q=\sum \limits_{1\leq i<j\leq g-1} a_{ij}(s\alpha_i \odot t\alpha_j - s\alpha_j \odot t\alpha_i)$, where $a_{ij}=-b_{g-j,g-i}$ (see Remark \ref{omegai}). We will prove by induction on $k$ that
\begin{equation}
\label{coeff}
    a_{ij}=0 \quad \forall \quad i+j\leq 2k+2 .
\end{equation}
Notice that this finishes the proof. Indeed,  putting $r=g-j, m=g-i$ we have $$i+j\leq 2k+2 \Leftrightarrow r+m=2g-(i+j)\geq 2g-(2k+2).$$ 
We also get $m\geq g-k$, because  if $m\leq g-k-1$,  $r<m$ implies $r+m< 2g-2k-2$. \\
Recall that in Lemma \ref{lemmanahMus} we have shown that $Q \in Ker(\mu_{2})$ if and only if 
for every $3 \leq l \leq 2g-3$, 
\begin{equation}
\label{mu2}
\sum \limits_{\substack{1 \leq i < j \\ i+j=l}} a_{ij} (j-i)=0.
\end{equation}
 By Lemma \ref{lemmasadcx}, $Q \in Ker(\mu_{2k})$, $k \geq 2$, if and only if $Q \in Ker(\mu_{2})$ and and for every $m,l$ such that  $2 \leq m \leq k$, $  2m-1 \leq l \leq 2g-3$,
\begin{equation}
\label{mu2k}
\sum \limits_{\substack{1 \leq i < j \\ i+j=l \\ i \geq m-1, \\ j \geq m-1 }} a_{ij} (j-i) ij(i-1)(j-1)...(i-(m-2))(j-(m-2))=0.
\end{equation}
So let us prove \eqref{coeff}. The case $k=1$ is trivial, since $2k+2=4$ and we only have $l=3,4$. From equations \eqref{mu2} we have $a_{12}=a_{13}=0$. Now assume that the result is true for $k-1$, where $k\geq 2$. Hence if $Q\in Ker(\mu_{2(k-1)})$, we have $a_{ij}=0 \quad  \forall l=i+j\leq 2k $. Now if $Q\in Ker(\mu_{2k}) $ then  $Q\in Ker(\mu_{2k-2})$, hence $a_{ij}=0 \quad \forall l\leq 2k$. So we only have to check the cases $l=2k+1,2k+2$.
\begin{itemize}
    \item If $l=2k+1$, then we claim that $i\leq k$. Otherwise, if  $i\geq k+1$, we have $j>i\geq k+1$, hence  $i+j\geq 2k+3$, a contradiction. In the equations \eqref{mu2k} with $l=2k+1$, we have  $a_{1,2k},a_{2,2k-1},...,a_{k,k+1}$ as variables, since $i \leq k$. The  equations  given by \eqref{mu2} and \eqref{mu2k} for $m=2,...,k$, $l=2k+1$,  are then a system of  $k$ equations in $k$ variables, which are  linearly independent by the proof of Theorem \ref{rteoremainrese}. So we get $a_{i,j}=0$ for $i+j=2k+1$.
    \item If $l=2k+2$, we get $i\leq k$ exactly as in the previous case. Hence again in  \eqref{mu2k} with $l=2k+2$, we have exactly $k$ linearly independent equations in the $k$ variables $a_{1,2k+1},a_{2,2k},...,a_{k,k+2}$. So $a_{ij}=0$ if $i+j=2k+2$.
\end{itemize}
\end{proof}
\begin{remark} 
\label{remark1}
    Let $\omega_1,...,\omega_{g-1}$ be our basis of $H^0(M)$ and let $z$ be a local coordinate centered in the Weierstrass point $p$. Assume that we locally have $\omega_i=f_i(z)dz\otimes T^{\vee}$, by abuse of notation we will write $\omega_i^{(h)}(p)$ instead of $f_i^{(h)}(0)$. Then by Remark \ref{omegai} we have $$\omega_i^{(h)}(p)=0 \quad \forall h\leq 2g-(2i+3).$$ 
    \end{remark}
\begin{lemma}
    \label{lemma2}
    With the above notation, we have 
    $$
    \sum \limits_{1\leq i<j\leq g-1} b_{ij}(\omega_i^{(h)}\omega_j^{(l)}-\omega_i^{(l)}\omega_j^{(h)})(p)=0 \quad \forall h+l\leq 4k+1.
    $$
\end{lemma}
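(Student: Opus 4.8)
The plan is to prove the identity term by term. Expanding the left-hand side, it is a linear combination of products $\pm b_{ij}\,\omega_a^{(h)}(p)\,\omega_b^{(l)}(p)$ with $\{a,b\}=\{i,j\}$, and it suffices to show that each such product vanishes individually as soon as $h+l\le 4k+1$. So the whole argument reduces to a vanishing-order count, and there is no genuine analytic content.

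First I would record the two inputs. Since $Q\in Ker(\mu_{2k})$, Lemma \ref{lemma1} gives $b_{ij}=0$ whenever $i+j\ge 2g-(2k+2)$; equivalently, $b_{ij}\ne 0$ forces $i+j\le 2g-2k-3$. On the other hand, by Remark \ref{omegai} (restated in Remark \ref{remark1}) the local coordinate $g_i$ of $\omega_i$ at the Weierstrass point $p$ vanishes to order $2g-2i-2$, so $\omega_i^{(h)}(p)=0$ unless $h\ge 2g-2i-2$.

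The core step is then a one-line estimate. Suppose a term $b_{ij}\,\omega_i^{(h)}(p)\,\omega_j^{(l)}(p)$ with $1\le i<j\le g-1$ is nonzero. Nonvanishing of $b_{ij}$ gives $i+j\le 2g-2k-3$; nonvanishing of the two factors gives $h\ge 2g-2i-2$ and $l\ge 2g-2j-2$. Adding these,
$$h+l\ \ge\ 4g-2(i+j)-4\ \ge\ 4g-2(2g-2k-3)-4\ =\ 4k+2,$$
which contradicts $h+l\le 4k+1$. The same bound applies verbatim to the companion term $b_{ij}\,\omega_i^{(l)}(p)\,\omega_j^{(h)}(p)$ after exchanging the roles of $h$ and $l$. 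Hence, in the range $h+l\le 4k+1$, every term of the sum vanishes, which is the assertion.

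The only things to watch are bookkeeping points: the strict-versus-non-strict inequalities in translating Lemma \ref{lemma1}, and the fact that $k$ must lie in the range where Lemma \ref{lemma1} applies, which it does throughout Section \ref{iso}. I would also remark that the bound $4k+1$ is sharp: taking $i+j=2g-2k-3$, $h=2g-2i-2$, $l=2g-2j-2$ produces a term with $h+l=4k+2$, and since the relations cutting out $Ker(\mu_{2k})$ (Lemma \ref{lemmasadcx}) do not force these ``boundary'' coefficients to vanish, this term is in general nonzero. This is exactly what is needed afterwards to detect the non-isotropy of $V_{k+1}$ and to compute $\rho_{HE}(Q)$ on $\xi_p^{2k+1}\odot\xi_p^{2k+3}$.
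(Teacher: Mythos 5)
Your proposal is correct, and it rests on exactly the same two inputs as the paper's proof (Lemma \ref{lemma1} for the vanishing of $b_{ij}$ with $i+j\geq 2g-(2k+2)$, and the vanishing orders $\mathrm{ord}_p\,\omega_i=2g-2i-2$ from Remark \ref{omegai}/\ref{remark1}), but your bookkeeping is organized differently and more efficiently. The paper fixes $h>l$, groups the sum by the second index $j=g-s$, and then splits into the ranges $h\leq 4k+3-2s$ and $h\geq 4k+4-2s$ to kill each block; you instead observe that a single term $b_{ij}\,\omega_i^{(h)}(p)\,\omega_j^{(l)}(p)$ can be nonzero only if simultaneously $i+j\leq 2g-2k-3$, $h\geq 2g-2i-2$ and $l\geq 2g-2j-2$, and adding the last two inequalities gives $h+l\geq 4g-2(i+j)-4\geq 4k+2$, contradicting $h+l\leq 4k+1$; by symmetry in $h,l$ the companion term dies as well. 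This termwise estimate is shorter, avoids the case analysis and the WLOG assumption $h>l$, and has the added benefit of making the sharpness at $h+l=4k+2$ transparent, which is precisely the level at which the paper later extracts the nonzero contributions in Theorem \ref{thm3} (your closing remark that such boundary terms are ``in general nonzero'' is only heuristic, but nothing in the lemma depends on it). So the proof is complete as written; the only point worth keeping explicit is the translation of Lemma \ref{lemma1} into the strict bound $i+j\leq 2g-2k-3$, which you already flag.
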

\begin{proof}
    We may assume $h>l$, since if $h=l$ we have trivially zero. From Remark \ref{remark1}, $\omega_j^{(l)}(p)=0 \quad \forall j$ such that $2j+3\leq 2g-l$. Moreover, since $i<j$, this also implies $\omega_i^{(l)}(p)=0$, so the terms in the sum with  $2j+3\leq 2g-l$ are identically zero. Then we can assume that the  indexes $j$ that appear in the sum are those satisfying  $2j+3\geq 2g-l+1$, which gives $j \geq g-\frac{l+2}{2} $. 
Denote by
$\Tilde{l}:= \frac{l+2}{2}$. 
Hence we have $$\sum \limits_{1\leq i<j\leq g-1} b_{ij}(\omega_i^{(h)}\omega_j^{(l)}-\omega_i^{(l)}\omega_j^{(h)})(p)=\sum \limits_{1\leq i<j\leq g-1,j\geq g-\Tilde{l}} b_{ij}(\omega_i^{(h)}\omega_j^{(l)}-\omega_i^{(l)}\omega_j^{(h)})(p)=$$ 
\begin{multline*}
    =\sum \limits_{1\leq i< g-1} b_{i,g-1}(\omega_i^{(h)}\omega_{g-1}^{(l)}-\omega_i^{(l)}\omega_{g-1}^{(h)})(p)+\sum \limits_{1\leq i< g-2} b_{i,g-2}(\omega_i^{(h)}\omega_{g-2}^{(l)}-\omega_i^{(l)}\omega_{g-2}^{(h)})(p) \\+ \dots +\sum \limits_{1\leq i<g-\Tilde{l}} b_{i,g-\Tilde{l}}(\omega_i^{(h)}\omega_{g-\Tilde{l}}^{(l)}-\omega_i^{(l)}\omega_{g-\Tilde{l}}^{(h)})(p).
\end{multline*}
Let us focus on the first term of the above sum. From \Cref{lemma1}, $Q\in Ker(\mu_{2k})$ implies $b_{i,g-1}=0\quad \forall g-(2k+1)\leq i\leq g-2$. Then we have 

\begin{equation}
\label{first}
\sum \limits_{1\leq i< g-1} b_{i,g-1}(\omega_i^{(h)}\omega_{g-1}^{(l)}-\omega_i^{(l)}\omega_{g-1}^{(h)})(p)=\sum \limits_{i=1}^{g-2k-2} b_{i,g-1}(\omega_i^{(h)}\omega_{g-1}^{(l)}-\omega_i^{(l)}\omega_{g-1}^{(h)})(p).
\end{equation}
Moreover, again by \Cref{remark1}, if $2i+3\leq 2g-h<2g-l$, then $\omega_i^{(h)}(p)=\omega_i^{(l)}(p)=0$. But in the above sum we have $2i+3\leq 2(g-2k-2)+3=2g-4k-1$. 
Hence, if $h\leq 4k+1$  (so $l < 4k+1$), then $2g-4k-1\leq 2g-h<2g-l$, so $2i+3 \leq  2g-h<2g-l$ in all summands in \eqref{first}, so the sum is identically zero.

Let us now consider the general term, where $j=g-s$. From \Cref{lemma1}, we have $b_{i,g-s}=0$ $\forall g-(2k+2-s)\leq i < g-s$. Hence the $i$'s that actually appear in the sum are those satisfying $i\leq g-(2k+2-s)-1=g-2k-3+s$. In the same way as we did for the first term, for $h\leq 4k+3-2s $, we have $2i+3\leq 2(g-2k-3+s)\leq 2g-h$, so 
$\omega_i^{(h)}(p)=\omega_i^{(l)}(p)=0$ and the term is zero. 

 It remains to look at the cases when $h\geq 4k+4-2s$. Since $h+l\leq 4k+1$ by assumption, we must have $4k+4-2s\leq h+l\leq 4k+1$, which gives $l\leq 2s-3$. For these values of $l$, we have $\omega_{g-s}^{(l)}(p)=0$ by Remark \ref{remark1}. Hence the term is zero again, because $i<j$, so $\omega_i^{(l)}(p)=0$. Thus we have shown that all terms are zero. 
 \end{proof}
We are now ready to state and prove our result.
\begin{theorem}
\label{thm1}
    Let $C$ be an hyperelliptic curve of genus $g$, $p\in C$ a Weierstrass point. Let $\gamma_1,...,\gamma_r \in H^0(K_C)$ where locally $\gamma_{\alpha}=g_{\alpha}dz$ and let $$Q=\sum \limits_{\alpha,\beta=1}^u c_{\alpha \beta} \gamma_{\alpha}\gamma_{\beta}=\sum \limits_{1\leq i<j\leq g-1} b_{ij}Q_{ij} \in Ker(\mu_{2k}).$$ Then $$
    \sum \limits_{\alpha,\beta=1}^u c_{\alpha \beta} g_{\alpha}^{(h)} g_{\beta}^{(l)}(p)=0 \quad \forall h+l\leq 4k+3.
    $$
\end{theorem}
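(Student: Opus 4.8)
The plan is to pass to the two-variable picture of $Q$ near $(p,p)\in C\times C$ and turn the statement into a lower bound for an order of vanishing. Taking $c_{\alpha\beta}$ symmetric, view $Q=\sum_{1\le i<j\le g-1}b_{ij}Q_{ij}$ as an element of $S^2H^0(K_C)\subset H^0(C\times C,K_C\boxtimes K_C)$ and let $B(u,v)$ be its local expression in $dz\boxtimes dz$ for a coordinate $z$ centred at $p$; thus $B(u,v)=\sum_{\alpha,\beta}c_{\alpha\beta}g_\alpha(u)g_\beta(v)$ and $\sum_{\alpha,\beta}c_{\alpha\beta}g_\alpha^{(h)}g_\beta^{(l)}(p)=\partial_u^h\partial_v^lB(p,p)$, so it is enough to show $\operatorname{ord}_{(p,p)}B\ge 4k+4$ (the case $Q=0$ being trivial). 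The first step is an elementary factorisation. Writing, by abuse of notation, $s(z),t(z),\omega_i(z)$ for the local coefficients of $s,t,\omega_i$ in local frames $\ell$ of $L$ and $m$ of $M$, and $\ell\otimes m=e(z)\,dz$ with $e$ a unit, one computes the local expression of $Q_{ij}=s\omega_i\odot t\omega_j-s\omega_j\odot t\omega_i$ directly; since $s\omega_i$ has local coefficient $e\,s\,\omega_i$ and $t\omega_j$ has $e\,t\,\omega_j$, the common factors $s$ and $t$ split off, and up to a nonzero constant one gets
$$B(u,v)=e(u)e(v)\,A(u,v)\,D(u,v),$$
where $A(u,v):=s(u)t(v)-s(v)t(u)$ and $D(u,v):=\sum_{1\le i<j\le g-1}b_{ij}\bigl(\omega_i(u)\omega_j(v)-\omega_j(u)\omega_i(v)\bigr)$.

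Next I would bound $\operatorname{ord}_{(p,p)}A$ and $\operatorname{ord}_{(p,p)}D$. The form $A$ is antisymmetric, so $A(p,p)=0$ and $\partial_vA(p,p)=-\partial_uA(p,p)$, while $\partial_uA(p,p)=(s't-t's)(p)$ is the value at $p$ of the local coefficient of $\mu_{1,L}(s\wedge t)$; this vanishes since $p$ is a Weierstrass point, because by \eqref{muformula} the zero divisor of $\mu_{1,L}(s\wedge t)$ is the ramification divisor of $\pi$ (the $g^1_2$ being base-point-free), which contains $p$. Hence every partial of $A$ of order $\le1$ vanishes at $(p,p)$, i.e.\ $\operatorname{ord}_{(p,p)}A\ge2$. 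For $D$, relabelling the second summand gives $\partial_u^h\partial_v^lD(p,p)=\sum_{i<j}b_{ij}\bigl(\omega_i^{(h)}\omega_j^{(l)}-\omega_i^{(l)}\omega_j^{(h)}\bigr)(p)$, which is exactly the quantity shown to be zero in Lemma \ref{lemma2} for all $h+l\le 4k+1$; so every Taylor coefficient of $D$ at $(p,p)$ of total degree $\le 4k+1$ vanishes, i.e.\ $\operatorname{ord}_{(p,p)}D\ge 4k+2$. (A different choice of $m$ only rescales each $\omega_i$ by a common unit, hence $D$ by a unit in $u$ and in $v$, so this is insensitive to the normalisation used in Lemma \ref{lemma2}.)

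Since $\operatorname{ord}_{(p,p)}$ is additive on products of nonzero germs in $\mathbb C\{u,v\}$ and $e(u)e(v)$ is a unit, $\operatorname{ord}_{(p,p)}B=\operatorname{ord}_{(p,p)}A+\operatorname{ord}_{(p,p)}D\ge 2+(4k+2)=4k+4$, and therefore $\partial_u^h\partial_v^lB(p,p)=\sum_{\alpha,\beta}c_{\alpha\beta}g_\alpha^{(h)}g_\beta^{(l)}(p)=0$ for every $h+l\le 4k+3$, which is the assertion. The real content sits in Lemma \ref{lemma2} (itself built on Lemma \ref{lemma1} and on the orders of vanishing of the $\omega_i$ at the Weierstrass point); what remains here is light, the only genuinely delicate point being the observation that the Weierstrass hypothesis forces $\operatorname{ord}_{(p,p)}A\ge2$ rather than merely $\ge1$ — this one extra unit of vanishing is precisely what improves the bound from $4k+2$ to $4k+3$.
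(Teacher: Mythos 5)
Your argument is correct, and it uses the same essential ingredients as the paper's proof, assembled differently. The paper proves Theorem \ref{thm1} by expanding \eqref{eq1} with the Leibniz rule into \eqref{eq2} and then running a case analysis on the surviving boundary terms: the contributions with $n+m\le 4k+1$ die by Lemma \ref{lemma2}, those with $n+m=4k+3$ or $(n,m)=(h,l)$ die because $st-ts\equiv 0$, and those with $n+m=4k+2$ when $h+l=4k+3$ die because $(s't-t's)(p)=0$ at the Weierstrass point. Your factorisation of $Q$ as a bi-section near $(p,p)$, $B(u,v)=e(u)e(v)A(u,v)D(u,v)$ with $A=s(u)t(v)-s(v)t(u)$ and $D=\sum b_{ij}(\omega_i(u)\omega_j(v)-\omega_j(u)\omega_i(v))$, is just the two-variable form of the decomposition underlying Lemmas \ref{fhgf} and \ref{lemmasss}, and it packages those three vanishing facts as $\operatorname{ord}_{(p,p)}A\ge 2$ (antisymmetry plus $p\in R$, via \eqref{muformula}) and, via Lemma \ref{lemma2}, $\operatorname{ord}_{(p,p)}D\ge 4k+2$; superadditivity of the order then makes the boundary bookkeeping automatic and yields the bound $4k+3$ in one stroke. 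What your route buys is conceptual economy (no case analysis, and the role of the Weierstrass hypothesis as exactly one extra unit of vanishing is transparent); what it does not change is the mathematical content, which still rests entirely on Lemma \ref{lemma2} and hence on Lemma \ref{lemma1}. Your two side remarks — that one may take $c_{\alpha\beta}$ symmetric so that $\sum c_{\alpha\beta}g_\alpha^{(h)}g_\beta^{(l)}(p)=\partial_u^h\partial_v^l B(p,p)$ depends only on $Q$, and that changing local frames rescales $A$ and $D$ by units so the order bounds are insensitive to the normalisation of Remark \ref{remark1} — are both needed and correctly handled, and they match the conventions the paper uses implicitly in \eqref{eq1}.
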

\begin{proof}
We have to compute the following expression $\forall h+l\leq 4k+3$:
\begin{multline}
    \label{eq1}
    \sum \limits_{\alpha,\beta=1}^u c_{\alpha \beta} g_{\alpha}^{(h)} g_{\beta}^{(l)}(p)=\\=\sum \limits_{1\leq i<j\leq g-1} b_{ij}[(s\omega_i)^{(h)}(t\omega_j)^{(l)}+(t\omega_j)^{(h)}(s\omega_i)^{(l)}-(s\omega_j)^{(h)}(t\omega_i)^{(l)}-(t\omega_i)^{(h)}(s\omega_j)^{(l)}](p).
\end{multline}
We have
\begin{equation*}
    (s\omega_i)^{(h)}(t\omega_j)^{(l)}-(s\omega_j)^{(h)}(t\omega_i)^{(l)}
    =\sum\limits_{n=0}^h\sum\limits_{m=0}^l \binom{h}{n} \binom{l}{m} s^{(h-n)}t^{(l-m)}(\omega_i^{(n)}\omega_j^{(m)}-\omega_j^{(n)}\omega_i^{(m)}).
\end{equation*}

Hence the term inside the square brackets of \Cref{eq1} becomes 
\begin{equation*}
   \sum\limits_{n=0}^h\sum\limits_{m=0}^l \binom{h}{n} \binom{l}{m} (\omega_i^{(n)}\omega_j^{(m)}-\omega_j^{(n)}\omega_i^{(m)})(s^{(h-n)}t^{(l-m)}-t^{(h-n)}s^{(l-m)}).
\end{equation*}
So equation  \eqref{eq1} is
\begin{equation}
\label{eq2}
    \sum\limits_{n=0}^h\sum\limits_{m=0}^l  \binom{h}{n} \binom{l}{m}(s^{(h-n)}t^{(l-m)}-t^{(h-n)}s^{(l-m)})(p)\sum \limits_{1\leq i<j\leq g-1} b_{ij}(\omega_i^{(n)}\omega_j^{(m)}-\omega_j^{(n)}\omega_i^{(m)})(p).
\end{equation}
From \Cref{lemma2} we have that
$$
\sum \limits_{1\leq i<j\leq g-1} b_{ij}(\omega_i^{(n)}\omega_j^{(m)}-\omega_j^{(n)}\omega_i^{(m)})(p)=0\quad \forall m+n\leq 4k+1.
$$
Since $n\leq h,m\leq l$ we have $m+n\leq h+l$, so this implies that the expression in  \eqref{eq2} is zero for all $h,l$ such that $h+l \leq 4k+1$.\\
Now assume $h+l=4k+2$. If $n+m\leq 4k+1$ the expression is zero exactly as before, so the only case to consider is $n+m=4k+2$ and we must have $n=h,m=l$. But in this case we have trivially zero, because $(st-ts)=0$.\\
We are only left to the case $h+l=4k+3$. Now again, if $m+n\leq 4k+1$ we get zero and if $m+n=4k+3$ we get zero as in the previous case (this would imply again $n=h,m=l$). So we can only have $n+m=4k+2$. This implies either $n=h-1,m=l$ or $n=h,m=l-1$. 
In both cases, the first factor of \eqref{eq2} is (up to scalar) $(s't-t's)(p)=0$
 since $p$ is a Weierstrass point. Hence we have proven that \eqref{eq2} is zero $\forall h+l\leq 4k+3$ and this concludes the proof.

\end{proof}
For any $k \leq 2g-3$, consider the $(k+1)$-dimensional subspace $V_k:=\langle \xi_p^1,\xi_p^3,...,\xi_p^{2k+1}\rangle \subset H^1(T_C)^+$. We have the following 
\begin{theorem}
\label{thm2}
    Let $0\leq k\leq  \lfloor \frac{g-3}{2} \rfloor$ and let $Q\in Ker(\mu_{2k})$. Then we have 
    \begin{equation*}
   \rho_{HE}(Q)(\xi_p^l\odot \xi_p^m)=     \rho(Q)(\xi_p^l\odot \xi_p^m)=0 \quad \forall l+m\leq 4k+3.
    \end{equation*}
    This implies that the subspace 
    $V_k=\langle \xi_p^1,\xi_p^3,...,\xi_p^{2k+1}\rangle \subset H^1(T_C)^+$
     is isotropic for $\rho_{HE}(Q)$,  $\forall Q\in Ker(\mu_{2k}).$
\end{theorem}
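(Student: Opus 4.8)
The plan is to derive Theorem \ref{thm2} as an essentially formal consequence of Theorem \ref{thm1}, combined with the dictionary — supplied by Remark \ref{remarkhigher}, i.e.\ \cite[Remark 3.2]{fredianihigher} — between jet identities at $p$ and the vanishing of the Hodge--Gaussian quadric $\rho(Q)$ on Schiffer variations. First I would fix the Weierstrass point $p$, a local coordinate $z$ centred at $p$, and write $Q$ in the two forms $Q = \sum_{\alpha,\beta=1}^u c_{\alpha\beta}\gamma_\alpha\gamma_\beta = \sum_{1\le i<j\le g-1} b_{ij}Q_{ij}$, with local expressions $\gamma_\alpha = g_\alpha(z)\,dz$. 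Since $Q \in Ker(\mu_{2k})$ and $p$ is a Weierstrass point, Theorem \ref{thm1} gives the jet identities
\begin{equation*}
\sum_{\alpha,\beta=1}^u c_{\alpha\beta}\, g_\alpha^{(h)}(p)\, g_\beta^{(l)}(p) = 0 \qquad \text{for all } h+l \le 4k+3,
\end{equation*}
which are exactly the hypotheses of Remark \ref{remarkhigher} with $m = 4k+3$.

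Applying Remark \ref{remarkhigher} then yields $\rho(Q)(\xi_p^l \odot \xi_p^m) = 0$ for all odd $l, m \ge 1$ with $l+m \le 4k+3$. Since $p$ is a Weierstrass point, the odd Schiffer variations $\xi_p^l$ lie in $H^1(T_C)^+$ (Section \ref{Schiffer}), so by \eqref{rhoh} we may replace $\rho(Q)$ by $\rho_{HE}(Q)$ throughout, obtaining $\rho_{HE}(Q)(\xi_p^l\odot\xi_p^m) = 0$ in the same range. The isotropy of $V_k = \langle \xi_p^1, \xi_p^3,\dots,\xi_p^{2k+1}\rangle$ follows immediately: any two generators $\xi_p^l, \xi_p^m$ have $l, m \in \{1,3,\dots,2k+1\}$, hence $l+m \le (2k+1)+(2k+1) = 4k+2 \le 4k+3$, so $\rho_{HE}(Q)$ vanishes on $V_k\odot V_k$ by bilinearity. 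The hypothesis $k \le \lfloor\frac{g-3}{2}\rfloor$ enters only through Theorem \ref{rteoremainrese}, ensuring that $Ker(\mu_{2k})$ is nonzero and that $2k+1 \le g-2$, so the variations $\xi_p^{2k+1}$ stay within the regime where the constructions of Sections \ref{Schiffer}--\ref{second} apply.

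I do not anticipate a genuine obstacle here: the substantive content is already in Theorem \ref{thm1} (and in the Gaussian-map identities of Section \ref{secidsyhbcvT}). The only points needing attention are bookkeeping ones: verifying that $4k+2 \le 4k+3$ covers every pair of generators of $V_k$, that the restriction to odd $l,m$ in Remark \ref{remarkhigher} is harmless since $V_k$ is spanned by odd Schiffer variations, and that $\rho$ and $\rho_{HE}$ may be identified on the relevant $\sigma$-invariant classes via \eqref{rhoh}. If the statement is to be read without any parity restriction on $l,m$, one would additionally need the even-index counterpart of Remark \ref{remarkhigher}; but the odd case is precisely what the isotropy of $V_k$ requires.
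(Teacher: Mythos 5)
Your proposal is correct and follows essentially the same route as the paper, whose proof of Theorem \ref{thm2} consists precisely of invoking Theorem \ref{thm1} together with Remark \ref{remarkhigher} and the identification $\rho_{HE}=\rho$ on $H^1(T_C)^+$ from \eqref{rhoh}. The bookkeeping points you flag (odd indices only, $4k+2\le 4k+3$ for pairs of generators of $V_k$) are exactly the details left implicit in the paper's one-line argument.
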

\begin{proof}
    The second assertion follows directly from the first one. The first one follows immediately by Remark \ref{remarkhigher} (\cite[Remark 3.2]{fredianihigher}) and by Theorem \ref{thm1}. 
\end{proof}
We have the following theorem.
\begin{theorem}
\label{thm3}
    Assume $0\leq k\leq \lfloor \frac{g-3}{2} \rfloor$. Then there exists $Q \in Ker(\mu_{2k})$ such that $\rho(Q) ( \xi_p^{2k+1} \odot\xi_p^{2k +3}) \neq 0$. Hence the subspace $V_{k+1}=\langle \xi_p^1,...,\xi_p^{2k+1},\xi_p^{2k+3}\rangle  \subset H^1(T_C)^+$ is not isotropic for $\rho_{HE}(Ker(\mu_{2k}))$. \end{theorem}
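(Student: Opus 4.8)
The plan is to produce an explicit quadric $Q \in Ker(\mu_{2k})$ whose ``first non-vanishing derivative jet'' along $p$ is the one at total order $4k+4$, and then to feed this into the second half of Remark \ref{remarkhigher} (\cite[Remark 3.2]{fredianihigher}), which computes $\rho(Q)(\xi_p^{2k+1}\odot\xi_p^{2k+3})$ in terms of the numbers $\sum_{\alpha,\beta}c_{\alpha\beta}g_\alpha^{(4k+4-n)}(0)g_\beta^{(n)}(0)$ for $n=0,\dots,4k+3$. By Theorem \ref{thm1}, for any $Q\in Ker(\mu_{2k})$ all jets of total order $\le 4k+3$ vanish at $p$, so the only possible obstruction to isotropy of $V_{k+1}$ along the pair $(\xi_p^{2k+1},\xi_p^{2k+3})$ lives at total order $m+1 = 4k+4$, i.e. $r+n = 2k+1 + 2k+3 = 4k+4$; the formula in Remark \ref{remarkhigher} then gives the value of $\rho(Q)(\xi_p^{2k+1}\odot\xi_p^{2k+3})$ as an explicit linear combination of these order-$(4k+4)$ jets. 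So it suffices to exhibit one $Q\in Ker(\mu_{2k})$ making that linear combination nonzero.

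The key steps, in order. First, I would retrace the computation in the proof of Theorem \ref{thm1} one degree further: by \eqref{eq2}, the order-$(4k+4)$ jet $\sum c_{\alpha\beta}g_\alpha^{(h)}g_\beta^{(l)}(p)$ (for $h+l=4k+4$) reduces, after all the Weierstrass-point vanishings $(st-ts)(p)=0$, $(s't-t's)(p)=0$ are used, to a sum of terms of the form $(s''t - 2s't' + ts'')(p)\cdot \sum_{i<j} b_{ij}(\omega_i^{(n)}\omega_j^{(m)}-\omega_j^{(n)}\omega_i^{(m)})(p)$ with $n+m = 4k+2$, times binomial factors; concretely the relevant ``new'' quantity is $\sum_{i<j} b_{ij}(\omega_i^{(2k+1)}\omega_j^{(2k+1)} - \dots)(p)$ type sums where one index $j$ equals $g-1$ and the rest of the indices have been forced to $0$ by Lemma \ref{lemma1} and the order-of-vanishing bounds in Remark \ref{remark1}. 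In fact the analysis in Lemma \ref{lemma2} shows the first surviving contribution comes from the extreme summand $j=g-1$, $i = g-2k-2$, where $\omega_{g-1}$ has $ord_p = 0$ and $\omega_{g-2k-2}$ has $ord_p = 2g-2(g-2k-2)-2 = 4k+2$, so $\omega_{g-2k-2}^{(4k+2)}(p)\neq 0$ while $\omega_{g-1}^{(0)}(p)\neq 0$; this is exactly the pair making the jet nonzero.

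Second, I would choose $Q = b\, Q_{g-2k-2,\,g-1}$ (or, if membership in $Ker(\mu_{2k})$ fails for this single basis quadric, a suitable combination $Q_{g-2k-2,g-1} + \sum(\text{correction terms with smaller } i+j)$ whose existence is guaranteed by the dimension count $\dim Ker(\mu_{2k}) = \tfrac{(g-1)(g-2)}{2} - k(2g-2k-3) > 0$ for $k\le\lfloor\frac{g-3}{2}\rfloor$ from Theorem \ref{rteoremainrese}): one checks, using Lemma \ref{lemmasadcx}, that the indices $i+j = 2g-(2k+2)$ and $i+j = 2g-(2k+1)$ are large enough that the defining equations \eqref{equationsmuk} of $Ker(\mu_{2k})$ either do not involve $b_{g-2k-2,g-1}$ or can be satisfied by adjusting lower-order coefficients, so that such a $Q$ lies in $Ker(\mu_{2k})$. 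With this $Q$, the only nonzero contribution to the order-$(4k+4)$ jet is the single product $\omega_{g-2k-2}^{(4k+2)}(p)\,\omega_{g-1}^{(0)}(p) \neq 0$ (up to the nonzero scalar $(s''t - 2s't' + ts'')(p)$, which is nonzero at a Weierstrass point since $s,t$ are a basis of $H^0(L)$ with $|L|$ the $g^1_2$ ramified at $p$), so $\sum c_{\alpha\beta}g_\alpha^{(h)}g_\beta^{(l)}(p)\neq 0$ for the appropriate $h+l=4k+4$. Plugging into the $r+n=m+1$ formula of Remark \ref{remarkhigher} with $(r,n)=(2k+1,2k+3)$, one then verifies the resulting linear combination of these jets is nonzero — here one must check that the rational coefficients $\frac{(n-k)}{k!(m+1-k)!}$ do not conspire to cancel, which they do not because only one jet is nonzero.

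The main obstacle I anticipate is Step two: showing that one can genuinely realize a quadric in $Ker(\mu_{2k})$ with the prescribed non-vanishing top jet, i.e. that the constraints cutting out $Ker(\mu_{2k})$ inside $I_2$ (Lemma \ref{lemmasadcx}) are compatible with $b_{g-2k-2,g-1}\neq 0$. This is really a bookkeeping problem about which equations \eqref{equationsmuk} involve the coefficient $a_{1,2k+1}$ (the one corresponding, via $a_{ij} = -b_{g-j,g-i}$, to $b_{g-2k-2,g-1}$): since $i+j = 2k+2$ is the smallest index level on which the order-$(4k+4)$ jet depends, and the equations for $Ker(\mu_{2k})$ at level $l = 2k+2$ form a rank-$k$ system in the $k$ unknowns $a_{1,2k+1},\dots,a_{k,k+2}$ (by the determinant computation in the proof of Theorem \ref{rteoremainrese}), a generic solution of that homogeneous system has $a_{1,2k+1}\neq 0$ — this is exactly what is needed. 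Once this is in place the rest is a one-line application of Remark \ref{remarkhigher} together with Theorem \ref{thm2}, which gives isotropy of $V_k$, so that $V_{k+1} = V_k + \langle\xi_p^{2k+3}\rangle$ fails isotropy precisely at the pair $(\xi_p^{2k+1},\xi_p^{2k+3})$.
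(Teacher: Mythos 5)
Your overall framing matches the paper's: by Theorem \ref{thm1} and Remark \ref{remarkhigher}, for $Q\in Ker(\mu_{2k})$ the value $\rho(Q)(\xi_p^{2k+1}\odot\xi_p^{2k+3})$ is a universal linear combination of the jets $\sum_{\alpha,\beta}c_{\alpha\beta}g_\alpha^{(h)}(0)g_\beta^{(l)}(0)$ with $h+l=4k+4$, which one analyzes through Lemma \ref{lemma1} and Remark \ref{remark1}. But the crucial step is wrong. The surviving part of the order-$(4k+4)$ jet is \emph{not} the single product indexed by $(i,j)=(g-2k-2,g-1)$: the paper's computation gives $\rho(Q)(\xi_p^{2k+3}\odot\xi_p^{2k+1})=\sum_{u=1}^{k+1}\lambda_{k,u}\,b_{g-2k-3+u,g-u}=-\sum_{u=1}^{k+1}\lambda_{k,u}\,a_{u,2k+3-u}$ with \emph{all} $\lambda_{k,u}\neq 0$, so it involves all $k+1$ coefficients at the level $i+j=2k+3$; your ``only one term survives'' picture is correct only for $k=0$ (the paper's Example). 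Your index conversion is also off by one: $b_{g-2k-2,g-1}=-a_{1,2k+2}$ (level $2k+3$), not $a_{1,2k+1}$ (level $2k+2$). This matters: by Lemma \ref{lemma1} every coefficient at level $\leq 2k+2$ vanishes for $Q\in Ker(\mu_{2k})$, so if the jet depended on level-$(2k+2)$ coefficients it would vanish identically, contradicting the statement; and your claimed ``generic solution with $a_{1,2k+1}\neq 0$'' of a rank-$k$ homogeneous system in $k$ unknowns does not exist, since such a system has only the trivial solution.

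Independently of the indexing, your witness quadric cannot be built as proposed. The equations cutting out $Ker(\mu_{2k})$ in Lemma \ref{lemmasadcx} decouple by level $l=i+j$, and at the relevant level $l=2k+3$ they form $k$ linearly independent equations in the $k+1$ unknowns $a_{1,2k+2},\dots,a_{k+1,k+2}$. Hence no $Q\in Ker(\mu_{2k})$ can have $a_{1,2k+2}\neq 0$ while all other level-$(2k+3)$ coefficients vanish (already for $k=1$ the level-$5$ equation $3a_{1,4}+a_{2,3}=0$ forbids it), and adding ``correction terms'' supported at other levels cannot help, because they do not enter these equations. Once the bookkeeping is corrected you are left with exactly the nontrivial content of the theorem: the level-$(2k+3)$ solution space is a line, and you must show that the functional $\sum_{u=1}^{k+1}\lambda_{k,u}a_{u,2k+3-u}$ does not vanish on it, i.e.\ that it is not a linear combination of the $k$ kernel equations at that level. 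Your proposal asserts this by genericity without argument; the paper proves it by computing each $\lambda_{k,u}$ explicitly (a nonzero multiple of $\omega_{g-2k-3+u}^{(4k+4-2u)}(p)\,\omega_{g-u}^{(2u-2)}(p)$) and then observing that rescaling the sections $\omega_{g-u}$ changes the $\lambda_{k,u}$ while leaving the kernel equations unchanged, so the functional cannot be dependent on them. Without this nondegeneracy argument (or an equivalent determinant computation), the proof is incomplete.
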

\begin{proof}
    Let $Q=\sum\limits_{\alpha,\beta=1}^rc_{\alpha,\beta}\gamma_{\alpha} \gamma_{\beta}$, where $\gamma_1,...,\gamma_r \in H^0(K_C)$. We will prove that there exists $Q$ such that $\rho(Q)(\xi_p^{2k+3}\odot\xi_p^{2k+1})\neq 0$. By Remark \ref{remarkhigher}, we know that 
    \begin{equation}
        \rho(Q)(\xi_p^{2k+3}\odot\xi_p^{2k+1})=2\pi i\sum\limits_{u=0}^{2k}\left[\sum\limits_{\alpha,\beta=1}^r c_{\alpha,\beta}\gamma_{\alpha}^{(4k+4-u)}(0)\gamma_{\beta}^{(u)}(0)\right]\frac{(2k+1-u)}{u!(4k+4-u)!}
    \end{equation}
      Let us focus our attention on  the term inside the square brackets, which is
      \begin{equation}
      \label{equationstar}
          \sum\limits_{\alpha,\beta=1}^r c_{\alpha,\beta}\gamma_{\alpha}^{(4k+4-u)}(0)\gamma_{\beta}^{(u)}(0). \quad 
      \end{equation}
      We see that we must compute the derivatives of order $(h,l)$ of $Q$ where $h+l=4k+4$ and $0\leq l\leq 2k$. In order to do this, we will again use our basis of $H^0(K_C\otimes L^{\vee})$ given by the $\omega_i$'s, exactly as in \eqref{eq1} and \eqref{eq2}. Hence we must compute 
    \begin{equation*}
        \sum\limits_{n=0}^h\sum\limits_{m=0}^l  \binom{h}{n} \binom{l}{m}(s^{(h-n)}t^{(l-m)}-t^{(h-n)}s^{(l-m)})(p)\sum \limits_{1\leq i<j\leq g-1} b_{ij}(\omega_i^{(n)}\omega_j^{(m)}-\omega_j^{(n)}\omega_i^{(m)})(p)
    \end{equation*}
    where $h+l=4k+4$. If $n+m\leq 4k+1$, we know that it is zero by \Cref{lemma2}. If $n+m=4k+4$, then in the equation above, we have $n=h$, $m=l$, so the first factor is (up to scalar) $(st-ts)=0$. If $n+m=4k+3$, we have either $n=h-1$ and $m=l$ or $n=h$ and $m=l-1$. In both cases, the first factor is (up to scalar) $(s't-t's)(p)=0$, because $p$ is a Weierstrass point. So we can only have $n+m=4k+2$, where $h=n$ and $m=l-2$ or $h=n-2$ and $m=l$, since if $n=h-1$ and $m=l-1$, the first factor is $(s't'-t's')=0$.\\
    Summarising, in the above equation we only have to consider the following terms:
    \begin{enumerate}
        \item $n=h$ and $m=l-2$, which only works if $l\geq 2$ and gives 
        \begin{equation*}
            (st''-ts'')(p)\sum \limits_{1\leq i<j\leq g-1} b_{ij}(\omega_i^{(h)}\omega_j^{(l-2)}-\omega_j^{(h)}\omega_i^{(l-2)})(p).
        \end{equation*}
        \item $n=h-2$ and $m=l$, which gives 
        \begin{equation*}
            (s''t-t''s)(p)\sum \limits_{1\leq i<j\leq g-1} b_{ij}(\omega_i^{(h-2)}\omega_j^{(l)}-\omega_j^{(h-2)}\omega_i^{(l)})(p).
            \end{equation*}
    \end{enumerate}

Let us consider the first case. By Remark \ref{remark1}, we have $\omega_j^{(l-2)}(p)=0$ if $2j+3\leq 2g-l+2$, which gives $2j\leq 2g-l-1$. We have two subcases here: $l$ odd or $l$ even.
\begin{itemize}
    \item Assume $l$ is odd. We claim that we get zero. In fact, if $j\leq g-\frac{l+1}{2}$, then $\omega_j^{(l-2)}(p)=0$ by Remark \ref{remark1}. Hence we can assume $j\geq g-\frac{l+1}{2}+1=g-\frac{l-1}{2}$. Let $j:=g-u$, hence $u\in \{1,...,\frac{l-1}{2}\}$. From \Cref{lemma1} we have $b_{i,g-u}=0$ for $i\geq g-2k-2+u$, so we may assume $i\leq g-2k-3+u$.

    We know that $\omega_i^{(l-2)}(p)=0$ if $i\leq g-\frac{l+1}{2}$, but $i\leq g-2k-3+u\leq g-\frac{l+1}{2}$ if and only if $u\leq 3+2k-\frac{l+1}{2}$. Observe that this is true since $u\leq \frac{l-1}{2}$ and $\frac{l-1}{2}\leq 3+2k-\frac{l+1}{2}$, since $l\leq 2k$. Hence $\omega_i^{(l-2)}(p)=0$.\\
    Now let us consider $\omega_i^{(h)}(p)$. We know that $\omega_i^{(h)}(p)=0$ if $2i\leq 2g-h-3$, but $i\leq g-2k-3+u$ implies $2i\leq 2g-4k-6+2u$. Now, $2g-4k-6+2u\leq 2g-h-3$ if and only if $2u\leq 4k+3-h$, which gives $l-1\leq 4k+3-h$, which is true if $l+h=4k+4$, that is  our assumption. Hence $\omega_i^{(h)}(p)=0$, for all $i$ and hence every term vanishes.
    \item Now assume $l$ is even. If $j\leq g-\frac{l+1}{2}=g-\frac{l}{2}-\frac{1}{2}<g-\frac{l}{2}$, we get $\omega_j^{(l-2)}(p)=0$ by Remark \ref{remark1}. So we may assume $j\geq g-\frac{l}{2}$. Setting $j=g-u$, we get $u\leq \frac{l}{2}$. As in the odd case, we may assume $i\leq g-2k-3+u$. Now $\omega_i^{(l-2)}(p)=0$ if $i\leq g-\frac{l}{2}-1$ and $g-2k-3+u\leq g-\frac{l}{2}-1$ if and only if $u\leq 2k+2-\frac{l}{2}$. Since $u\leq \frac{l}{2}$, we get that the last inequality is true if and only if $l\leq 2k+2$, which is the case because $l\leq 2k$. Hence $\omega_i^{(l-2)}(p)=0$.\\
    We have that $\omega_i^{(h)}(p)=0$ if $i\leq g-\frac{h}{2}-\frac{3}{2}<g-\frac{h}{2}-1$, so $\omega_i^{(h)}(p)=0$ if $i\leq g-\frac{h}{2}-2$. Hence we can assume $i\geq g-\frac{h}{2}-1$, but $i\leq g-2k-3+u$ and $g-2k-3+u-(g-\frac{h}{2}-1)=u-2-2k+\frac{h}{2}\leq \frac{l}{2}+\frac{h}{2}-2-2k=0$ because $h+l=4k+4$. Hence if $u=l/2$, there is a unique nonzero term, which is precisely $b_{g-\frac{h}{2}-1,g-\frac{l}{2}}\omega_{g-\frac{h}{2}-1}^{(h)}(p)\omega_{g-\frac{l}{2}}^{(l-2)}(p)$.
\end{itemize}
Let us now consider the second case. Again, we should distinguish the cases $l$ odd and $l$ even. However, arguing exactly in the same way as in the previous case, we again get that if $l$ is odd every term is zero.

So let us assume $l$ even, then also $h$ is even.  By Remark \ref{remark1}, $\omega_j^{(l-2)(p)}=0$ if $j\leq g-\frac{l}{2}-\frac{3}{2}<g-\frac{l}{2}-1$, so we may assume $j\geq g-\frac{l}{2}-1$. As usual we put $j=g-u$ and we have $u\leq \frac{l}{2}+1$. We have $\omega_i^{(l)}(p)=0$ if $2i\leq 2g-l-3$, but, as in the previous case, we can assume $i\leq g-2k-3+u$. 
Since $l \leq 2k$, and $u\leq \frac{l}{2}+1$, we have $2g-6-4k+2u\leq 2g-l-3$. So $2i \leq 2g-6-4k+2u \leq 2g-l-3$, and hence $\omega_i^{(l)}(p)=0$.



Again, by Remark \ref{remark1}, $\omega_i^{(h-2)}(p)=0$ if $2i\leq 2g-h-1$. As usual we may assume $2i\leq 2g-6-4k+2u$. 
We have $u \leq \frac{l}{2}+1$. We claim that the only nonzero term occurs for $u = \frac{l}{2}+1$ and $i=g-\frac{h}{2}$. In fact, if $u \leq \frac{l}{2}$, we have $2g-6-4k+2u \leq 2g-h-1$, so $\omega_i^{(h-2)}(p)=0$. So $u = \frac{l}{2}+1$,  $i = g-\frac{h}{2}$ and the only term is $b_{g-\frac{h}{2},g-\frac{l}{2}-1}\omega_{g-\frac{h}{2}}^{(h-2)}(p)\omega_{g-\frac{l}{2}-1}^{(l)}(p)$

We have computed all the non-zero terms of       \eqref{equationstar}. These are 
\begin{itemize}
    \item $(st''-ts'')(p)[\binom{l}{2}b_{g-\frac{h}{2}-1,g-\frac{l}{2}}\omega_{g-\frac{h}{2}-1}^{(h)}(p)\omega_{g-\frac{l}{2}}^{(l-2)}(p)-\binom{h}{2}b_{g-\frac{h}{2},g-\frac{l}{2}-1}\omega_{g-\frac{h}{2}}^{(h-2)}(p)\omega_{g-\frac{l}{2}-1}^{(l)}(p)]$ if $l\geq 2$ and $h,l$ even;
    \item $-(st''-ts'')(p)\binom{h}{2}b_{g-\frac{h}{2},g-\frac{l}{2}-1}\omega_{g-\frac{h}{2}}^{(h-2)}(p)\omega_{g-\frac{l}{2}-1}^{(l)}(p)$ if  $l=0$ and $h=4k+4$.
\end{itemize}
We are now ready to complete the proof. Since $h+l=4k+4$, this implies $\frac{h}{2}=2k+2-\frac{l}{2}$, so that $g-\frac{h}{2}=g-2-2k+\frac{l}{2}$. We have
\begin{multline*}
    \rho(Q)(\xi_p^{2k+3},\xi_p^{2k+1})= 
    2\pi i\sum_{\substack{ 2 \leq l \leq 2k \\ l\text{ even}}}\frac{(2k+1-l)}{l!(4k+4-l)!} (st''-ts'')(p) \cdot \\
   \left[ \binom{l}{2}b_{g-3-2k+\frac{l}{2},g-\frac{l}{2}}\omega_{g-3-2k+\frac{l}{2}}^{(4k+4-l)}(p)\omega_{g-\frac{l}{2}}^{(l-2)}(p)- 
   \binom{h}{2}b_{g-2-2k+\frac{l}{2},g-\frac{l}{2}-1}\omega_{g-2-2k+\frac{l}{2}}^{(4k+2-l)}(p)\omega_{g-\frac{l}{2}-1}^{(l)}(p) \right] - \\ -2\pi i (st''-ts'')(p)\frac{2k+1}{(4k+4)!}\binom{4k+4}{2}b_{g-2k-2,g-1}\omega_{g-2k-2}^{(4k+2)}(p)\omega_{g-1}(p)
\end{multline*}
Notice that every term in the equation is a linear combination of $b_{r,u}$ where $r+u=2g-2k-3$.\\
Let $\lambda_{k,u}$ be the coefficient of $b_{g-2k-3+u,g-u}$ where $u=1,...,k+1$. We will now see that $\lambda_{k,u}\neq 0$ for every $k,u$. 
For $1\leq u\leq k$, we have
\begin{multline*}
    \lambda_{k,u}=\omega_{g-3-2k+u}^{(4k+4-2u)}(p)\omega_{g-u}^{(2u-2)}(p)\left[\frac{2k+1-2u}{(4k+4-2u)!}\binom{2u}{2}-\binom{4k+6-2u}{2}\frac{2k+3-2u}{(4k+6-2u)!}\right]=\\=\omega_{g-3-2k+u}^{(4k+4-2u)}(p)\omega_{g-u}^{(2u-2)}(p)\frac{-8u^3+8u^2(k+1)-4ku-2k-3}{2(4k+4-2u)!}
\end{multline*}
where the last equality follows after some easy computations. Notice that 
\begin{equation*}
    -8u^3+8u^2(k+1)-4ku-2k-3\neq 0
\end{equation*}
because it is odd for every $k,u$. Moreover, $\omega_{g-3-2k+u}^{(4k+4-2u)}(p)\omega_{g-u}^{(2u-2)}(p)\neq 0$ because $\omega_{g-3-2k+u}^{(4k+4-2u)}(p)$ has order $4k+4-2u$ at $p$ and $\omega_{g-u}^{(2u-2)}$ has order  $2u-2$ at $p$ (see Lemma \ref{omegai}). \\
Finally, if $u=k+1$ we have $$\lambda_{k,k+1}=-\frac{1}{2(2k+2)!}\omega_{g-2-k}^{(2k+2)}(p)\omega_{g-k-1}^{(2k)}.$$ Hence $\lambda_{k,u}\neq 0$, for all $u =1,...,k+1$. 

So assume by contradiction that 

\begin{equation}
\label{2k+3}
        \rho(Q)(\xi_p^{2k+3}\odot \xi_p^{2k+1})=\sum\limits_{u=1}^{k+1} \lambda_{k,u}b_{g-2k-3+u,g-u}=- \sum\limits_{u=1}^{k+1} \lambda_{k,u}a_{u,2k+3-u}=0
    \end{equation}
for every $Q \in Ker(\mu_{2k})$, where $a_{ij}=-b_{g-j,g-i}$ by Remark \ref{omegai}.  Recall that $Q \in Ker(\mu_{2k})$ if and only if the coefficients $a_{ij}$ satisfy the equations given in Lemma \ref{lemmasadcx}. Moreover observe that from Theorem \ref{rteoremainrese} it follows that we have   $k$ linearly independent equations in the   variables $a_{u,2k+3-u}$ with $u=1,...,k+1$. We have shown  that each coefficient $\lambda_{k,u}$ in equation \eqref{2k+3} is different from zero and up to scalar it is the product  $\omega_{g-3-2k+u}^{(4k+4-2u)}(p)\omega_{g-u}^{(2u-2)}(p)$. So we can multiply the sections $\omega_{g-u}$ by a non zero scalar multiple, hence equation \eqref{2k+3} cannot be linearly dependent from the $k$ equations giving $Ker(\mu_{2k})$. Hence there exists a quadric $Q \in ker(\mu_{2k})$ such that  $\rho(Q)(\xi_p^{2k+3}\odot \xi_p^{2k+1}) \neq 0$.

\end{proof}
\begin{remark}

Notice that we have shown that the equation \eqref{2k+3}  gives a hyperplane in $Ker(\mu_{2k})$ that we denote by 
\begin{equation*}
A_{k,0}:=\{Q\in Ker(\mu_{2k}) : \rho(Q)(\xi_p^{2k+3}\odot\xi_p^{2k+1})=0\}.
\end{equation*}
Moreover we have just proven that a quadric $Q\in A_{k,0}$ iff satisfies the system of $k+1$ linearly independent equations in $k+1$ variables, given by the equations in Lemma \ref{lemmasadcx} defining $Ker(\mu_{2k})$ and equation \eqref{2k+3}. In particular the coefficients of $Q$ satisfy the equations $ b_{g-2k+3+u,g-u}=-a_{u,2k+3-u}=0 \quad  \forall u=1,...,k+1$.
\end{remark}
\begin{example}
    If $k=0$ we have a very concrete description of $A_{0,0}$. In fact, we have $\rho(Q_{ij})(\xi_p,\xi_p^3)=0$, $\forall (i,j)\neq (g-2,g-1)$ and $\rho(Q_{g-2,g-1})(\xi_p,\xi_p^3)\neq 0$. Hence we get that $A_{0,0}=\langle Q_{i,j}\mid 1\leq i<j\leq g-1, (i,j)\neq (g-2,g-1)\rangle$.
\end{example}
\begin{theorem}
\label{Ak0}
    Let $0\leq k\leq \lfloor \frac{g-3}{2} \rfloor$ and let us now assume $Q\in A_{k,0}$. Then there exist coefficients $\alpha_{k,u}$, all non-zero such that
    $$
    \rho(Q)(\xi_p^{2k+3}\odot \xi_p^{2k+3})=\sum\limits_{u=1}^{k+1}\alpha_{k,u}b_{g-4-2k+u,g-u}\omega_{g-4-2k+u}
^{(4k+6-2u)}(p)\omega_{g-u}^{(2u-2)}(p),$$ 
Moreover, $$ \rho(Q)(\xi_p^{2k+3}\odot \xi_p^{2k+3})=0 \Leftrightarrow b_{g-4-2k+u,g-u} =0\quad \forall u=1,...,k+1.$$
\end{theorem}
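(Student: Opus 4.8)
The plan is to carry the computation from the proof of Theorem~\ref{thm3} one step further, with one extra input. Start from the formula of Remark~\ref{remarkhigher} ($=$\cite[Remark~3.2]{fredianihigher}) with $n=r=2k+3$, $m+1=4k+6$:
\begin{equation*}
\rho(Q)(\xi_p^{2k+3}\odot\xi_p^{2k+3})=2\pi i\sum_{v=0}^{2k+2}\Big(\sum_{\alpha,\beta}c_{\alpha\beta}\,g_\alpha^{(4k+6-v)}(0)\,g_\beta^{(v)}(0)\Big)\frac{2k+3-v}{v!\,(4k+6-v)!},
\end{equation*}
and expand each bracket, exactly as in \eqref{eq1}--\eqref{eq2} using the basis $Q_{ij}=s\omega_i\odot t\omega_j-s\omega_j\odot t\omega_i$, as a double sum over $(n,m)$ with $h+l=4k+6$ of $\binom{h}{n}\binom{l}{m}(s^{(h-n)}t^{(l-m)}-t^{(h-n)}s^{(l-m)})(p)\cdot\sum_{i<j}b_{ij}(\omega_i^{(n)}\omega_j^{(m)}-\omega_j^{(n)}\omega_i^{(m)})(p)$. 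The extra input is a sharpening of Lemma~\ref{lemma2}: since $Q\in A_{k,0}$, combining Lemma~\ref{lemma1} with the description of $A_{k,0}$ in the Remark following Theorem~\ref{thm3} gives $b_{r,m}=0$ whenever $r+m\ge 2g-2k-3$; feeding this stronger vanishing (in place of Lemma~\ref{lemma1}) through the argument of Lemma~\ref{lemma2}, and using $ord_p\omega_m=2g-2m-2$ (Remark~\ref{omegai}), one gets $\sum_{i<j}b_{ij}(\omega_i^{(n)}\omega_j^{(m)}-\omega_j^{(n)}\omega_i^{(m)})(p)=0$ for all $n+m\le 4k+3$ — a surviving monomial would need $n,m$ even and would force $i+j\ge 2g-2-(n+m)/2\ge 2g-2k-3$, against $i+j\le 2g-2k-4$.

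Granted this, as in Theorem~\ref{thm3} the only contributions come from $n+m=4k+4$ (the blocks $n+m\in\{4k+5,4k+6\}$ vanish because their $s,t$-factors are $(st-ts)(p)$ or $\pm(s't-t's)(p)$, all zero since $p$ is a Weierstrass point, and within $n+m=4k+4$ the term $(n,m)=(h-1,l-1)$ has factor $(s't'-t's')(p)=0$), leaving $(n,m)=(h-2,l)$ and $(n,m)=(h,l-2)$ with factors $(s''t-t''s)(p)$ and $(st''-ts'')(p)$. The same order-of-vanishing bookkeeping applied to the inner sums $\sum_{i<j}b_{ij}(\omega_i^{(h)}\omega_j^{(l-2)}-\cdots)(p)$ and $\sum_{i<j}b_{ij}(\omega_i^{(h-2)}\omega_j^{(l)}-\cdots)(p)$ forces equality in the inequalities above, so each collapses to one monomial of index sum $2g-2k-4$, of the form $b_{g-4-2k+u,g-u}\,\omega_{g-4-2k+u}^{(4k+6-2u)}(p)\,\omega_{g-u}^{(2u-2)}(p)$. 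Substituting back and collecting the coefficient of each such monomial, the combinatorial factor telescopes exactly as at the end of the proof of Theorem~\ref{thm3}, giving the first assertion with
\begin{equation*}
\alpha_{k,u}=\frac{-2\pi i\,(st''-ts'')(p)}{(2u-2)!\,(4k+6-2u)!}\neq 0,\qquad u=1,\dots,k+1
\end{equation*}
(using $ord_p\omega_m=2g-2m-2$ one even checks that $\alpha_{k,u}\,\omega_{g-4-2k+u}^{(4k+6-2u)}(p)\,\omega_{g-u}^{(2u-2)}(p)$ is independent of $u$).

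For the equivalence, ``$\Leftarrow$'' is immediate from the displayed formula. For ``$\Rightarrow$'': by Lemma~\ref{lemmasadcx} and Theorem~\ref{rteoremainrese} the condition $Q\in Ker(\mu_{2k})$ imposes exactly $k$ linearly independent equations on the $k+1$ coefficients $\{a_{i,2k+4-i}:1\le i\le k+1\}$ (equivalently on the $b_{g-4-2k+u,g-u}$, via $b_{g-j,g-i}=-a_{ij}$) when $g\ge 2k+4$; if $g=2k+3$ these coefficients already vanish and there is nothing to prove. Since the equation defining $A_{k,0}$ only involves coefficients of index sum $2g-2k-3$, on $A_{k,0}$ these $b$'s still vary in a one-dimensional family, and by the first part $\rho(Q)(\xi_p^{2k+3}\odot\xi_p^{2k+3})=0$ is the single extra equation $\sum_u\alpha_{k,u}\,\omega_{g-4-2k+u}^{(4k+6-2u)}(p)\,\omega_{g-u}^{(2u-2)}(p)\,b_{g-4-2k+u,g-u}=0$, all $\omega$-factors being nonzero. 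So it suffices to show this equation is not a consequence of the $k$ equations cutting out $Ker(\mu_{2k})$ on these variables, i.e. that the one-dimensional solution vector $v^\ast$ satisfies $\sum_i v^\ast_i\neq 0$.

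This is the only genuinely new calculation and the main obstacle. The $m$-th defining equation ($m=1,\dots,k$) has coefficient of $a_{i,2k+4-i}$ equal to $(2k+4-2i)\,\phi_m(i)$ with $\phi_m(i)=\prod_{t=0}^{m-2}(i-t)(2k+4-i-t)$, a polynomial of degree $m-1$ in $d^2$ for $d:=k+2-i\in\{1,\dots,k+1\}$; hence $w_i:=(2k+4-2i)v^\ast_i$ is, up to a nonzero scalar, the $k$-th divided-difference weight at the $k+1$ nodes $d^2=1,4,\dots,(k+1)^2$, so $w_{k+2-d}\propto(-1)^d d^2/((k+1-d)!\,(k+1+d)!)$ and $v^\ast_{k+2-d}\propto(-1)^d d/((k+1-d)!\,(k+1+d)!)$. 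Therefore $\sum_i v^\ast_i$ is a nonzero multiple of $\sum_{d=1}^{k+1}(-1)^d d/((k+1-d)!\,(k+1+d)!)$, and a short binomial computation (multiply by $(2k+2)!$ and use standard alternating binomial sums) evaluates this to $-(k+1)\,C_k$, with $C_k$ the $k$-th Catalan number, in particular nonzero. This completes the proof: apart from this nonvanishing, everything is a careful transcription of the proofs of Theorems~\ref{thm1} and~\ref{thm3}, the new ingredient being the sharpened vanishing from $Q\in A_{k,0}$.
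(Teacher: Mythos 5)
Your proposal is correct and its computational skeleton is the same as the paper's: expand $\rho(Q)(\xi_p^{2k+3}\odot\xi_p^{2k+3})$ via Remark \ref{remarkhigher}, use the sharpened vanishing $\sum_{i<j}b_{ij}(\omega_i^{(n)}\omega_j^{(m)}-\omega_j^{(n)}\omega_i^{(m)})(p)=0$ for $n+m\leq 4k+3$ (coming from $b_{r,m}=0$ for $r+m\geq 2g-2k-3$ on $A_{k,0}$), and observe that only the blocks $n+m=4k+4$ with $s,t$-factor $(st''-ts'')(p)\neq 0$ survive, collapsing the expression onto the monomials of index sum $2g-2k-4$; your explicit value $\alpha_{k,u}=-2\pi i\,(st''-ts'')(p)/((2u-2)!(4k+6-2u)!)$ is exactly what the paper's unwritten ``computations analogous to Theorem \ref{thm3}'' produce. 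Where you genuinely diverge is the implication $\rho(Q)(\xi_p^{2k+3}\odot\xi_p^{2k+3})=0\Rightarrow b_{g-4-2k+u,g-u}=0$ for all $u$: the paper settles this by ``concluding as in Theorem \ref{thm3}'', i.e.\ by the rescaling-of-the-sections argument showing the new equation cannot depend linearly on the $k$ equations of Lemma \ref{lemmasadcx}, whereas you identify the new functional exactly --- since the leading Taylor coefficients of $\omega_m=x^{g-m}dx/y$ at $p$ form a geometric progression in $m$ and the index sum $2g-2k-4$ is fixed, the coefficient $\alpha_{k,u}\,\omega_{g-4-2k+u}^{(4k+6-2u)}(p)\,\omega_{g-u}^{(2u-2)}(p)$ is independent of $u$, so the condition is $\sum_u b_{g-4-2k+u,g-u}=0$ --- and then verify by a divided-difference computation that the one-dimensional common kernel of the $k$ equations on these variables has coordinate sum proportional to $-\binom{2k}{k}=-(k+1)C_k\neq 0$; I checked both the formula for $\alpha_{k,u}$ and this binomial evaluation, and they are right (including your dismissal of the boundary case $g=2k+3$). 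What your route buys is a completely explicit, self-contained proof of the linear independence (with a pleasant Catalan constant), at the cost of more computation; the paper's rescaling argument is shorter but leaves that independence implicit. Two small repairs: the constancy in $u$ of $\alpha_{k,u}\omega_{g-4-2k+u}^{(4k+6-2u)}(p)\omega_{g-u}^{(2u-2)}(p)$ does not follow from $ord_p\omega_m=2g-2m-2$ alone but needs the geometric-progression structure of the leading coefficients (immediate from $\omega_m=x^{g-m}dx/y$ and $ord_p x=2$), and in the sharpened vanishing the step $i+j\geq 2g-2-(n+m)/2\geq 2g-2k-3$ fails as written for $n+m=4k+3$ unless you invoke integrality of $i+j$ (or the parity of the local expansions at the Weierstrass point); both are immediate to fix and do not affect the argument.
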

\begin{proof}
    The proof is very similar to the one given in \Cref{thm3}, where by Remark \ref{remarkhigher} we must now compute 
    $$
    \rho(Q)(\xi_p^{2k+3}\odot \xi_p^{2k+3})=2\pi i[\sum\limits_{u=0}^{2k+2}\sum\limits_{\alpha,\beta=1}^r c_{\alpha,\beta}\gamma_{\alpha}^{(4k+6-u)}(0)\gamma_{\beta}^{(s)}(0)]\frac{(2k+3-u)}{u!(4k+6-u)!}.
    $$
    By looking at the term inside the square brackets, we see that we must compute the derivatives of order $(h,l)$ of $Q$ where $h+l=4k+6$ and $0\leq l\leq 2k+2$. Hence we have to compute
    
    \begin{equation}
    \label{eq3}
    \sum\limits_{n=0}^h\sum\limits_{m=0}^l  \binom{h}{n} \binom{l}{m}(s^{(h-n)}t^{(l-m)}-t^{(h-n)}s^{(l-m)})(p)\sum \limits_{1\leq i<j\leq g-1} b_{ij}(\omega_i^{(n)}\omega_j^{(m)}-\omega_j^{(n)}\omega_i^{(m)})(p)
    \end{equation}
    where $h+l=4k+6$.\\
    The first step is analogous to Lemma \ref{lemma2}: if $Q\in A_{k,0}$ we have 
    $$
    \sum \limits_{1\leq i<j\leq g-1} b_{ij}(\omega_i^{(n)}\omega_j^{(m)}-\omega_i^{(m)}\omega_j^{(n)})(p)=0 \quad \forall m+n\leq 4k+3.
    $$
    Then one immediately sees that in \eqref{eq3} we only have to consider the case $n+m=4k+4$. Now, after some computations which are analogous to the ones in \Cref{thm3}, we get 
    $$
    \rho(Q)(\xi_p^{2k+3}\odot \xi_p^{2k+3})=\sum\limits_{u=1}^{k+1}\alpha_{k,u}b_{g-4-2k+u,g-u}\omega_{g-4-2k+u}
^{(4k+6-2u)}(p)\omega_{g-u}^{(2u-2)}(p),$$
where $\alpha_{k,u}\neq 0$ for every $u$. Then we conclude as in \Cref{thm3} that $$\rho(Q)(\xi_p^{2k+3}\odot \xi_p^{2k+3})=0 \Leftrightarrow b_{g-4-2k+u,g-u} =0\quad \forall u=1,...,k+1. $$
\end{proof}
\begin{remark}
    Let $Q\in A_{k,0}$. By Theorem \ref{Ak0}, the  linear condition $\rho(Q)(\xi_p^{2k+3}\odot \xi_p^{2k+3})=0 $ gives a hyperplane in $A_{k,0}$. Let us denote by $$A_{k,0,0}:=\{Q\in A_{k,0} : \rho(Q)(\xi_p^{2k+3}\odot\xi_p^{2k+3})=0\}.$$
\end{remark}

Following \cite{cfp} we give the following definition
\begin{definition}
A nonzero direction $\zeta \in H^1(T_C)^+$ is said to be asymptotic if $\rho_{HE}(Q)(\zeta \odot \zeta)=\rho(Q)(\zeta \odot \zeta) = 0$, $\forall Q \in I_2$. 
\end{definition}

Notice that saying that a nonzero element $\zeta \in H^1(T_C)^+$ is an asymptotic direction means that the point $[\zeta] \in {\mathbb P}H^1(T_C)^+$ is in the base locus of the linear space of quadrics $\rho_{HE}(I_2) \subset Sym^2 (H^1(T_C)^+)^{\vee}$. 
\begin{remark}
\label{schasym}
Recall that by Theorem \ref{thm2} putting $k=0$, a Schiffer variation at a Weiestrass point $\xi_p$ is asymptotic. 
Moreover, moving a branch point in ${\mathbb P}^1$, one can see that there exist algebraic curves in the hyperelliptic locus having these Schiffer variations as tangent directions (see \cite{gt}). 
  
\end{remark}

\begin{theorem}
\label{asymp}
Let $C$ be hyperelliptic of genus $g \geq 3$ and $p \in C$ be a Weierstrass point. Then the asymptotic directions in the space $V_{\lfloor \frac{g-2}{2} \rfloor}$, which is equal to $\langle\xi_p^1,\xi_p^3,...,\xi_p^{g-1}\rangle$ if $g$ is even, and  equal to $\langle\xi_p^1,\xi_p^3,...,\xi_p^{g-2}\rangle$ if  $g$ is odd, are exactly the Schiffer variations $\xi_p = \xi_p^1$. 


    
\end{theorem}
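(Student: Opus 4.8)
The plan is to prove that within $V_{\lfloor \frac{g-2}{2}\rfloor}$ the only asymptotic direction is $\xi_p^1$, by combining Theorem \ref{thm2} (which shows that all of $V_k$ is isotropic for $\rho_{HE}(Ker(\mu_{2k}))$) with a careful induction that, at each stage, peels off one more Schiffer coordinate using the non-vanishing statements of Theorems \ref{thm3} and \ref{Ak0}. Concretely, write an arbitrary element of $V := V_{\lfloor \frac{g-2}{2}\rfloor}$ as $\zeta = \sum_{j=0}^{N} c_j\,\xi_p^{2j+1}$, where $N = \lfloor \frac{g-2}{2}\rfloor$, and let $j_0$ be the largest index with $c_{j_0}\neq 0$. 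The goal is to show that if $\zeta$ is asymptotic then $j_0 = 0$.

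First I would set up the key bilinear bookkeeping: for a quadric $Q \in Ker(\mu_{2k})$ and odd indices $l,m$, Theorem \ref{thm1} (via Remark \ref{remarkhigher}) gives $\rho(Q)(\xi_p^l\odot\xi_p^m)=0$ whenever $l+m\le 4k+3$, and moreover $\rho(Q)(\xi_p^{2k+1}\odot\xi_p^{2k+3})$ is controlled by the hyperplane condition isolated in Theorem \ref{thm3}, while $\rho(Q)(\xi_p^{2k+3}\odot\xi_p^{2k+3})$ is controlled by Theorem \ref{Ak0}. The idea is an ascending induction on $k$: suppose we already know that any asymptotic $\zeta\in V$ has $c_j = 0$ for all $j \ge k+1$; I will show $c_k = 0$ as well, unless $k=0$. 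Assuming $c_j=0$ for $j\ge k+1$, we have $\zeta = \sum_{j=0}^{k} c_j\,\xi_p^{2j+1}$. For $Q \in Ker(\mu_{2k-2})$ (or $Q\in I_2$ when $k=0$), expand $\rho(Q)(\zeta\odot\zeta) = \sum_{j,j'} c_j c_{j'}\rho(Q)(\xi_p^{2j+1}\odot\xi_p^{2j'+1})$; by Theorem \ref{thm1} the only possibly-nonzero term with $Q\in Ker(\mu_{2k-2})$ is the top one, $c_k^2\,\rho(Q)(\xi_p^{2k+1}\odot\xi_p^{2k+1})$, since $2j+1 + 2j'+1 \le 4k < 4(k-1)+3$ fails only for $j=j'=k$ — here I must double-check the index arithmetic, but this is the mechanism. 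By Theorem \ref{Ak0} applied with $k$ replaced by $k-1$ (the relevant diagonal value $\rho(Q)(\xi_p^{2(k-1)+3}\odot\xi_p^{2(k-1)+3}) = \rho(Q)(\xi_p^{2k+1}\odot\xi_p^{2k+1})$), there exists $Q \in A_{k-1,0}\subset Ker(\mu_{2k-2})$ with $\rho(Q)(\xi_p^{2k+1}\odot\xi_p^{2k+1})\neq 0$; asymptoticity of $\zeta$ then forces $c_k^2 = 0$, i.e.\ $c_k = 0$. Iterating down to $k=1$ leaves $\zeta = c_0\,\xi_p^1$, and $\xi_p^1$ is indeed asymptotic by Remark \ref{schasym}.

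The main obstacle I anticipate is the base-case/boundary arithmetic rather than the conceptual structure: I need to verify that for each $k$ in the range $1\le k\le N$ the quadric produced by Theorem \ref{Ak0} (applied with parameter $k-1$) genuinely lies in $Ker(\mu_{2k-2})$ and that the diagonal entry $\rho(Q)(\xi_p^{2k+1}\odot\xi_p^{2k+1})$ it controls is exactly the coefficient of $c_k^2$ in the expansion — and that the constraint $k-1 \le \lfloor\frac{g-3}{2}\rfloor$ required by Theorems \ref{thm3} and \ref{Ak0} is met, which is why the statement is phrased with $V_{\lfloor\frac{g-2}{2}\rfloor}$ (so $N = \lfloor\frac{g-2}{2}\rfloor$ and $N-1 \le \lfloor\frac{g-3}{2}\rfloor$). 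A secondary subtlety is that, in general, killing the top coefficient requires testing against a \emph{single} well-chosen $Q$, so I should be careful that the same $\zeta$ is being tested at each inductive step and that earlier-killed coefficients do not resurface; this is automatic once the induction is organized top-down. Finally I would note the clean restatement: the asymptotic locus of $V_{\lfloor\frac{g-2}{2}\rfloor}$ is the single point $[\xi_p^1]\in\mathbb{P}V_{\lfloor\frac{g-2}{2}\rfloor}$, which is the assertion of the theorem.
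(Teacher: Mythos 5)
Your overall architecture --- kill the top Schiffer coefficient of an asymptotic $\zeta$ by testing against a well-chosen quadric in $Ker(\mu_{2k-2})$, then descend --- is the paper's strategy (the paper phrases it as ``let $2k+1$ be the largest index with nonzero coefficient and derive a contradiction'' rather than as an induction, and it splits into the cases $\lambda_{2k-1}=0$ and $\lambda_{2k-1}\neq 0$, using a combination $Q+aQ'$ in the second case; your variant, which always tests against a single quadric with nonzero diagonal value, would be a legitimate streamlining). However, the step you flagged as ``index arithmetic to double-check'' is wrong as stated, and it is the crux. For $Q\in Ker(\mu_{2(k-1)})$, Theorem \ref{thm2} gives vanishing of $\rho(Q)(\xi_p^l\odot\xi_p^m)$ only for $l+m\le 4(k-1)+3=4k-1$. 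In the expansion of $\rho(Q)(\zeta\odot\zeta)$ with $\zeta=\sum_{j\le k}c_j\xi_p^{2j+1}$, the cross term $(l,m)=(2k-1,2k+1)$ has $l+m=4k$, so it is \emph{not} killed by kernel membership; indeed Theorem \ref{thm3} (applied with parameter $k-1$) asserts precisely that $\rho(Q)(\xi_p^{2k-1}\odot\xi_p^{2k+1})\neq 0$ for some $Q\in Ker(\mu_{2k-2})$, contradicting your claim that only the diagonal term survives. In general one has $\rho(Q)(\zeta\odot\zeta)=2c_{k-1}c_k\,\rho(Q)(\xi_p^{2k-1}\odot\xi_p^{2k+1})+c_k^2\,\rho(Q)(\xi_p^{2k+1}\odot\xi_p^{2k+1})$, so ``asymptoticity forces $c_k^2=0$'' does not follow from what you wrote.

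The repair is exactly the device the paper builds: you must use that the test quadric lies in the hyperplane $A_{k-1,0}\subset Ker(\mu_{2k-2})$, whose \emph{defining condition} is the vanishing of that cross term; then for $Q\in A_{k-1,0}\setminus A_{k-1,0,0}$ one gets $\rho(Q)(\zeta\odot\zeta)=c_k^2\,\rho(Q)(\xi_p^{2k+1}\odot\xi_p^{2k+1})$, which is nonzero unless $c_k=0$. You do pick $Q\in A_{k-1,0}$, but only to secure the nonzero diagonal, not realizing that this membership is also what disposes of the cross term --- as written, the cross term is dismissed for the wrong reason. A second, smaller gap: the existence of $Q\in A_{k-1,0}$ with $\rho(Q)(\xi_p^{2k+1}\odot\xi_p^{2k+1})\neq 0$ is not literally contained in Theorem \ref{Ak0}; one must check $A_{k-1,0,0}\subsetneq A_{k-1,0}$ (and $A_{k-1,0}\subsetneq Ker(\mu_{2k-2})$), which the paper justifies via $\dim Ker(\mu_{2k-2})\ge 3$, valid because $2k-2\le g-4$ in the relevant range by Theorem \ref{rteoremainrese} and the nesting of the kernels, together with the independence of the new linear conditions. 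Your boundary check $k-1\le\lfloor\frac{g-3}{2}\rfloor$ is correct and matches the paper's range, and the base case ($\xi_p^1$ asymptotic, via $\mu_2(Q)(p)=0$) agrees with Remark \ref{schasym}.
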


\begin{proof}
By Remark \ref{schasym} we know that the Schiffer variations at the Weierstrass points are asymptotic. 

    To show that these are the only ones, we will prove the result when $g$ is even, the case $g$ odd being very similar. \\ Let $v\in \langle \xi_p^1,\xi_p^3,...,\xi_p^{g-1}\rangle$, then $v=\sum\limits_{i=0}^{\frac{g-2}{2}}\lambda_{2i+1}\xi_p^{2i+1}$ and let $l$ be the maximum odd integer such $\lambda_l\neq 0$. We may assume $l \geq 3$, since $ \rho(Q)(\xi_p \odot \xi_p ) = - 2 \pi i \mu_2(Q)(p) =0$ for all $Q \in I_2$.  Then we have $v\in \langle \xi_p^1,...,\xi_p^l\rangle $. Since $l$ is odd, we have $l=2k+1$ for some integer $1 \leq k\leq \frac{g-2}{2}$. From \Cref{thm3} we know that there exists $Q\in Ker(\mu_{2k-2})$ such that $\rho(Q)(\xi_p^{2k+1}\odot\xi_p^{2k-1})\neq 0$ and $\rho(Q)(\xi_p^i\odot\xi_p^j)= 0$ for every $i,j\leq 2k-1$, $i,j$ odd integers. Then we have 
    $$
    \rho(Q)(v\odot v)=2\lambda_{2k-1}\lambda_{2k+1}\rho(Q)(\xi_p^{2k-1}\odot \xi_p^{2k+1})+$$
    $$+\lambda_{2k+1}^2 \rho(Q)(\xi_p^{2k+1}\odot \xi_p^{2k+1}).
    $$
     First, notice that for every $k \leq \frac{g-2}{2} $ we have $A_{k-1,0} \subsetneq Ker_{\mu_{2k-2}}$ and $ A_{k-1,0,0} \subsetneq A_{k-1,0} $.  In fact, dim $Ker(\mu_{2k-2})\geq 3$, since dim $Ker(\mu_{g-4})=3$ by Theorem \ref{rteoremainrese} and $2k-2 \leq g-4$.  
     
    If $\lambda_{2k-1}=0$, we simply take $Q \in A_{k-1,0} \setminus A_{k-1,0,0}$ and we get $\rho(Q)(v\odot v)\neq 0$ as desired.\\
    If $\lambda_{2k-1}\neq 0$, we choose $Q \in Ker(\mu_{2k-2})\setminus A_{k-1,0}$ and $Q'\in A_{k-1,0}\setminus A_{k-1,0,0}$. Hence, for a linear combination $Q + a Q'$, we get
    $$
    \rho(Q+aQ')(v\odot v)=2\lambda_{2k-1}\lambda_{2k+1}\rho(Q)(\xi_p^{2k-1}\odot \xi_p^{2k+1})+$$
    $$+\lambda_{2k+1}^2[ \rho(Q)(\xi_p^{2k+1}\odot \xi_p^{2k+1})+a\rho(Q')(\xi_p^{2k+1}\odot\xi_p^{2k+1})].
    $$
    By the choice of $Q'$, we have $\rho(Q')(\xi_p^{2k+1}\odot\xi_p^{2k+1})\neq 0$, so if we take $$a=-\frac{\rho(Q)(\xi_p^{2k+1}\odot\xi_p^{2k+1})}{\rho(Q')(\xi_p^{2k+1}\odot\xi_p^{2k+1})}$$
    we get $\rho(Q+aQ')(v\odot v)=2\lambda_{2k-1}\lambda_{2k+1}\rho(Q)(\xi_p^{2k-1}\odot \xi_p^{2k+1})\neq 0$ since $Q\notin A_{k-1,0}$.
    
\end{proof}

\begin{remark}
\label{chains}
In  \cite[Corollary 3.4]{cftrans} it is shown that $\rho_{HE}$ is injective, so 
$$\dim \rho_{HE}(Ker\mu_{2k}) = \dim Ker\mu_{2k} = \frac{(g-1)(g-2)}{2} - k(2g -2k -3),$$ 
for all $k$. For any hyperelliptic curve $C$ of genus $g \geq 3$ and for any  Weiestrass point $p \in C$, we have proven in Theorems \ref{thm2}, \ref{thm3} that we have the following chain of subspaces of quadrics in ${\mathbb P} H^1(T_C)^+$ and of corresponding maximal isotropic subspaces in $V_{\lfloor \frac{g-1}{2} \rfloor}$.

If $g$ is odd: $$
\rho_{HE}(Ker(\mu_{g-3}))\subsetneq \rho_{HE}(Ker(\mu_{g-5})) ...\subsetneq \rho_{HE}(Ker(\mu_2))\subsetneq \rho_{HE}(I_2);
$$
$$V_{\frac{g-3}{2}} \supsetneq V_{\frac{g-5}{2}} \supsetneq ... \supsetneq V_1 \supsetneq V_0=\langle \xi_p \rangle,$$
while if $g$ is even we have 
$$
\rho_{HE}(Ker(\mu_{g-4}))\subsetneq \rho_{HE}(Ker(\mu_{g-6}))...\subsetneq \rho_{HE}(Ker(\mu_2))\subsetneq \rho_{HE}(I_2),
$$
$$V_{\frac{g-4}{2} }\supsetneq V_{\frac{g-6}{2}} \supsetneq ... \supsetneq V_1 \supsetneq V_0=\langle \xi_p \rangle.$$
\end{remark}

Theorem  \ref{asymp} also allows us to give a bound for the dimension of a germ of a totally geodesic submanifold of ${\mathcal A}_g$, generically contained in the Hyperelliptic Torelli locus. Notice that, nevertheless this bound is weaker than the one proven in \cite[Theorem 6.2]{fp}. 
\begin{corollary}
\label{bound}
Let $Y$ be a germ of a totally geodesic submanifold of $\mathcal{A}_g$ generically contained in $j(\mathcal{H}_g)$ passing through $j(C)$, where $C$ is a hyperelliptic curve of genus $g$. Then we have 
$$\dim (Y)\leq \lfloor \frac{3g+1}{2} \rfloor.$$ 
\end{corollary}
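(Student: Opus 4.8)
The plan is to convert the geometric hypothesis on $Y$ into a statement about its tangent space and then invoke Theorem \ref{asymp}. First I would set $W:=T_{Y,j(C)}$. Since $Y$ is generically contained in $j(\mathcal H_g)$, its tangent space at a (general, hence hyperelliptic) point of the germ lies in $T_{\mathcal H_g,[C]}=H^1(C,T_C)^+$, a space of dimension $2g-1$, and $\dim Y=\dim W$. Because $Y$ is totally geodesic in $\mathcal A_g$ and sits in the chain $Y\subset\mathcal H_g\subset\mathcal A_g$, the additivity of second fundamental forms along a tower of immersions forces the intermediate second fundamental form $\rho_{HE}$ of $j_h$ to vanish on $W\odot W$; in particular every nonzero $\zeta\in W$ is an asymptotic direction, as already noted after the definition of asymptotic direction in the introduction.

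Next I would bring in a Weierstrass point. Fix a Weierstrass point $p\in C$ and consider $U:=V_{\lfloor\frac{g-2}{2}\rfloor}=\langle\xi_p^1,\xi_p^3,\dots\rangle\subset H^1(T_C)^+$, which has dimension $\lfloor\frac{g-2}{2}\rfloor+1=\lfloor\frac g2\rfloor$. By Theorem \ref{asymp} the only asymptotic directions contained in $U$ are the scalar multiples of $\xi_p=\xi_p^1$. Hence any vector of $W\cap U$ is simultaneously asymptotic (by the previous step) and contained in $U$, so it must be proportional to $\xi_p$; therefore $\dim(W\cap U)\le 1$.

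The conclusion is then a dimension count inside $H^1(T_C)^+$: by the Grassmann formula $\dim W=\dim(W\cap U)+\dim(W+U)-\dim U\le 1+(2g-1)-\lfloor g/2\rfloor=2g-\lfloor g/2\rfloor$. A short check gives $2g-\lfloor g/2\rfloor=\lfloor\frac{3g+1}{2}\rfloor$ (the value is $\frac{3g}{2}$ for $g$ even and $\frac{3g+1}{2}$ for $g$ odd), and since $\dim Y=\dim W$ this proves the bound.

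I expect no serious computational obstacle: once Theorem \ref{asymp} is available the argument is pure linear algebra. The only points requiring a bit of care are the reduction ensuring $W\subset H^1(T_C)^+$ (using that $Y$ is generically contained in the hyperelliptic locus and that $\dim Y$ is constant along the germ) and the standard fact that a totally geodesic $Y\subset\mathcal H_g\subset\mathcal A_g$ actually has $\rho_{HE}$ vanishing on $W\odot W$, i.e. that its tangent directions are genuinely asymptotic and not merely asymptotic after projection. As the statement anticipates, the resulting bound $\lfloor\frac{3g+1}{2}\rfloor$ is weaker than the bound $g+1$ of \cite[Theorem 6.2]{fp}; the interest of this derivation is that it uses only the single flag $V_{\lfloor\frac{g-2}{2}\rfloor}$ attached to one Weierstrass point.
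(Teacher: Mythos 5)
Your proposal is correct and follows essentially the same route as the paper: tangent directions of the totally geodesic germ are asymptotic, Theorem \ref{asymp} forces $\dim(W\cap V_{\lfloor\frac{g-2}{2}\rfloor})\le 1$, and the Grassmann formula inside $H^1(T_C)^+\cong\mathbb{C}^{2g-1}$ gives $\dim Y\le 2g-\lfloor g/2\rfloor=\lfloor\frac{3g+1}{2}\rfloor$. The paper writes the argument only for $g$ even (the odd case being analogous), exactly as you do in unified form.
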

\begin{proof}
We will prove it for $g$ even, the case $g$ odd is analogous.\\
    Set $W:=T_{j(C)}Y\subset H_1(T_C)^+$ and $V:=V_{\frac{g}{2} -1}= \langle \xi_p^1,\xi_p^3,...,\xi_p^{g-1}\rangle$. We have dim$(V)=\frac{g}{2}$. We claim that $\dim(V\cap W) \leq 1$. Indeed, take $v\in V\cap W$, then $v$ is asymptotic, since $Y$ is totally geodesic. So, by Theorem \ref{asymp} we know that $v$ is a multiple of $\xi^1_p$. Hence we have
    \begin{center}
    dim$(V)$+dim$(W) - 1 \leq \text{dim}$(V+W)$\leq $ dim$H^1(T_C)^+=2g-1$,
    \end{center}
    so dim$(Y)$= dim $(W)\leq 2g-\frac{g}{2}=\frac{3}{2}g$.
\end{proof}


\bibliographystyle{amsalpha}

\end{document}